\providecommand{\U}[1]{\protect\rule{.1in}{.1in}}
\providecommand{\U}[1]{\protect\rule{.1in}{.1in}}
\numberwithin{equation}{section}
\newtheorem{theorem}{Theorem}[section]
\newtheorem{corollary}[theorem]{Corollary}
\newtheorem{example}[theorem]{Example}
\newtheorem{lemma}[theorem]{Lemma}
\newtheorem{conjecture}[theorem]{Conjecture}
\newtheorem{prob}[theorem]{Problem}
\newtheorem{definition}[theorem]{Definition}
\newenvironment{proof}[1][Proof]{\noindent\textbf{#1.} }{\ \rule{0.5em}{0.5em}}
\begin{document}

\author{Do Trong Hoang\\Faculty of Mathematics and Informatics \\Hanoi University of Science and Technology\\1 Dai Co Viet, Hai Ba Trung, Hanoi, Vietnam\\hoang.dotrong@hust.edu.vn
\and Vadim E. Levit\\Department of Mathematics\\Ariel University, Israel\\levitv@ariel.ac.il
\and Eugen Mandrescu\\Department of Computer Science\\Holon Institute of Technology, Israel\\eugen\_m@hit.ac.il
\and My Hanh Pham\\Faculty of Education, An Giang University\\Vietnam National University Ho Chi Minh City\\Dong Xuyen, Long Xuyen, An Giang, Vietnam\\pmhanh@agu.edu.vn}
\title{Log-concavity of the independence polynomials of $\mathbf{W}_{p}$ graphs}
\date{}
\maketitle

\begin{abstract}
Let $G$ be a graph of order $n$. For a positive integer $p$, $G$ is said to be
a $\mathbf{W}_{p}$ graph if $n\geq p$ and every $p$ pairwise disjoint
independent sets of $G$ are contained within $p$ pairwise disjoint maximum
independent sets. In this paper, we establish that every connected
$\mathbf{W}_{p}$ graph $G$ is $p$-quasi-regularizable if and only if
$n\geq(p+1)\cdot\alpha$, where $\alpha$ is the independence number of $G$ and
$p\neq2$. This finding ensures that the independence polynomial of a connected
$\mathbf{W}_{p}$ graph $G$ is log-concave whenever $(p+1)\cdot\alpha\leq n\leq
p\cdot\alpha+2\sqrt{p\cdot\alpha+p}$ and $\frac{\alpha^{2}}{4\left(
\alpha+1\right)  }\leq p$, or $p\cdot\alpha+2\sqrt{p\cdot\alpha+p}<n\leq
\frac{\allowbreak\left(  \alpha^{2}+1\right)  \cdot p+\left(  \alpha-1\right)
^{2}}{\alpha-1}$ and $\frac{\alpha\left(  \alpha-1\right)  }{\alpha+1}\leq p$.
Moreover, the clique corona graph $G\circ K_{p}$ serves as an example of the
$\mathbf{W}_{p}$ graph class. We further demonstrate that the independence
polynomial of $G\circ K_{p}$ is always log-concave for sufficiently large $p$.

\noindent\textbf{Keywords}: very well-covered graph; quasi-regularizable
graph; corona graph; $\mathbf{W}_{p}$ graph; independence polynomial;
log-concavity. \newline\noindent\textbf{2010 Mathematics Subject
Classification:} Primary: 05C69; 05C30. Secondary: 05C31; 05C35.

\end{abstract}

\section{Introduction}

Throughout this paper $G$ is a finite, undirected, loopless graph without
multiple edges, with vertex set $V(G)$ of cardinality $\left\vert V\left(
G\right)  \right\vert =n\left(  G\right)  $, and edge set $E(G)$ of size
$\left\vert E\left(  G\right)  \right\vert =m\left(  G\right)  $. An edge
$e\in E(G)$ connecting vertices $x$ and $y$ will also be denoted as $xy$ or
$yx$. In this case, $x$ and $y$ are said to be \textit{adjacent}. An
\textit{independent set} in $G$ is a set of pairwise non-adjacent vertices. A
\textit{maximal independent set} of $G$ is the one that cannot be extended
further. A largest independent set in a graph is called a \textit{maximum
independent set}, and its cardinality is denoted by $\alpha(G)$, known as the
\textit{independence number} of $G$. It is well known that $\alpha(G) = 1 $ if
and only if $G$ is a complete graph.

Let $A$ be a subset of $V(G)$. We denote $G[A]$ the induced subgraph of $G$ on
$A$, while by $G-A$ we mean $G[V(G)-A]$. The neighborhood of $A$ in $G$ is the
set
\[
N_{G}(A)=\{v:v\in V(G)-A\text{ and }uv\in E(G)\text{ for some }u\in A\},
\]
the closed neighborhood of $S$ is $N_{G}[A]=A\cup N_{G}(A)$, and the
\textit{localization} of $G$ with respect to $A$ is $G_{A}=G-N_{G}[A]$. If
$A=\{v\}$, we write $N_{G}(v)$ (resp. $N_{G}[v]$, $G_{v}$, $G-v$) instead of
$N_{G}(\{v\})$ (resp. $N_{G}[\{v\}]$, $G_{\{v\}}$, $G-\{v\}$). The number
$\deg_{G}(v)=\left\vert N_{G}(v)\right\vert $ is called the \textit{degree} of
$v$ in $G$. A vertex of degree zero is an \textit{isolated vertex}. In
addition, $\delta(G)$ is the minimum degree of vertices of $G$.

A graph is called \textit{well-covered} if all its maximal independent sets
have the same size \cite{Plummer1970, Plum}. For instance, all complete graph
on $n$ vertices, denoted by $K_{n}$, is well-covered and the only cycles that
are well-covered are $C_{3},C_{4},C_{5},$ and $C_{7}$. Additionally, the path
on $n$ vertices, denoted by $P_{n}$, is well-covered if and only if $n=1,2,4$.

In this paper, we extend the exploration of well-covered graphs. In 1975,
Staples introduced the hereditary $\mathbf{W}_{p}$ as a generalization of
well-covered graphs \cite{StaplesThesis,Staples}. For a positive integer $p$,
a graph $G$ belongs to class $\mathbf{W}_{p}$ if $n(G)\geq p$ and for every
$p$ pairwise disjoint independent sets $A_{1},\ldots,A_{p}$ there exist $p$
pairwise disjoint maximum independent sets $S_{1},\ldots,S_{p}$ in $G$ such
that $A_{i}\subseteq S_{i}$ for $1\leq i\leq p$. The graph $G\in\mathbf{W}%
_{p}$ is also called the $\mathbf{W}_{p}$\textit{\ graph}. Observe, by
definition, that a graph is $\mathbf{W}_{1}$ if and only if it is
well-covered, and
\[
\mathbf{W}_{1}\supseteq\mathbf{W}_{2}\supseteq\mathbf{W}_{3}\supseteq
\cdots\supseteq\mathbf{W}_{p}\supseteq\cdots.
\]
Various methods for constructing $\mathbf{W}_{p}$ graphs are presented in
details in \cite{Pinter1991,Staples}.

A well-covered graph (with at least two vertices) is $1$-\textit{well-covered}
if the deletion of any vertex of the graph leaves it well-covered
\cite{Staples}. For instance, $K_{2}$ is $1$-well-covered, while a path on
four vertices $P_{4}$ is well-covered, but not $1$-well-covered. Furthermore,
the close relationship between $\mathbf{W}_{2}$ graphs and $1$-well-covered
graphs is demonstrated as follows.

\begin{theorem}
\cite{StaplesThesis,Staples}\label{th4} Let $G$ be a graph without isolated
vertices. Then $G$ is $1$-well-covered if and only if $\ G$ is a
$\mathbf{W}_{2}$ graph.
\end{theorem}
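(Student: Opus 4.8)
My plan is to prove the two implications separately, using throughout that $\mathbf{W}_{2}\subseteq\mathbf{W}_{1}$, so a $\mathbf{W}_{2}$ graph is automatically well-covered, and that a $1$-well-covered graph is well-covered by definition. In both directions the real content is the relation between maximal independent sets of a vertex-deleted subgraph and those of $G$.

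First I would treat $\mathbf{W}_{2}\Rightarrow 1$-well-covered, which I expect to be routine. Fix $v\in V(G)$ and a maximal independent set $M$ of $G-v$; the goal is $|M|=\alpha(G)$. If $v$ has a neighbor in $M$, then $M$ is maximal in $G$ and well-coveredness gives $|M|=\alpha(G)$. Otherwise $M\cup\{v\}$ is independent, so $M$ and $\{v\}$ are two disjoint independent sets, and the $\mathbf{W}_{2}$ property supplies disjoint maximum independent sets $S_{1}\supseteq M$ and $S_{2}\ni v$. Disjointness forces $v\notin S_{1}$, so $S_{1}$ is an independent set of $G-v$ containing the maximal set $M$; hence $S_{1}=M$ and $|M|=|S_{1}|=\alpha(G)$. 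Thus every maximal independent set of $G-v$ has size $\alpha(G)$, so $G-v$ is well-covered for every $v$, which is exactly $1$-well-coveredness.

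For the converse, assume $G$ is $1$-well-covered with no isolated vertices. I would first record the auxiliary fact $\alpha(G-v)=\alpha(G)$: choosing a neighbor $u$ of $v$ and a maximal independent set $T$ of $G-v$ with $u\in T$, the vertex $v$ cannot be added to $T$, so $T$ is maximal in $G$ and $|T|=\alpha(G)$, while $|T|=\alpha(G-v)$ by well-coveredness of $G-v$. Given disjoint independent sets $A_{1},A_{2}$, the plan is a two-stage extension resting on the stability property $(\ast)$: for every independent set $A$, the graph $G-A$ is well-covered with $\alpha(G-A)=\alpha(G)$. Granting $(\ast)$, I would extend $A_{2}$ to a maximal independent set $S_{2}$ of $G-A_{1}$, which then has size $\alpha(G-A_{1})=\alpha(G)$ and hence is a maximum independent set of $G$ disjoint from $A_{1}$; then, applying $(\ast)$ to the independent set $S_{2}$, I would extend $A_{1}$ to a maximal independent set $S_{1}$ of $G-S_{2}$, again of size $\alpha(G)$. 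The resulting $S_{1},S_{2}$ are disjoint maximum independent sets with $A_{1}\subseteq S_{1}$ and $A_{2}\subseteq S_{2}$, giving $G\in\mathbf{W}_{2}$.

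The hard part is $(\ast)$. Its one-vertex case is immediate from the auxiliary fact, but a direct induction on $|A|$ stalls: peeling a vertex $a$ off $A$ produces a maximal independent set $M$ of $G-A$ for which, in the bad case that $M\cup\{a\}$ is independent, one would get $|M|=\alpha(G)-1$ and lose well-coveredness, and nothing rules this out using well-coveredness alone, since a vertex-deleted subgraph of a $1$-well-covered graph need not be $1$-well-covered (already $P_{4}$ is well-covered but not $1$-well-covered). Forbidding this drop is precisely where the full hypothesis must be spent. I expect the clean route is an induction on $\alpha(G)$ organized around the localization $G_{a}=G-N_{G}[a]$: one shows that $G_{a}$ is again $1$-well-covered, has no isolated vertices, and satisfies $\alpha(G_{a})=\alpha(G)-1$, then places $a$ into $S_{1}$, commits the neighbors of $a$ lying in $A_{2}$ into $S_{2}$, deletes their closed neighborhoods to protect independence, and recurses. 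Proving that localization simultaneously preserves $1$-well-coveredness and the absence of isolated vertices is the principal obstacle, and it is exactly the point at which $1$-well-coveredness, rather than mere well-coveredness, becomes indispensable.
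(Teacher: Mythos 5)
The paper never proves this statement at all --- it is imported from Staples with a citation --- so your argument can only be judged on its own merits. Your first direction ($\mathbf{W}_{2}\Rightarrow1$-well-covered) is correct and complete, as are two pieces of the converse: the auxiliary fact $\alpha(G-v)=\alpha(G)$, and the two-stage extension deriving the $\mathbf{W}_{2}$ property from your stability property $(\ast)$ (for every independent set $A$, the graph $G-A$ is well-covered with $\alpha(G-A)=\alpha(G)$). The genuine gap is that $(\ast)$ itself is never proved. You explicitly leave its key ingredient --- that the localization $G_{a}$ of a $1$-well-covered graph without isolated vertices is again $1$-well-covered without isolated vertices --- as an unproven ``principal obstacle,'' and the surrounding recursion (committing $N(a)\cap A_{2}$ to $S_{2}$, deleting closed neighborhoods, recursing on $\alpha$) is only a sketch with its own unaddressed issues (the deleted sets are not independent, and one must still certify that the recursively built sets are maximum in $G$ itself). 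Worse, the localization claim you propose to rely on is essentially Pinter's theorem for $\mathbf{W}_{2}$ graphs (Lemma \ref{Pinter_lem} in the paper), whose known proofs go through the $\mathbf{W}_{2}$ property, i.e., through the very equivalence being established; so that route is at best much harder than the theorem and at worst circular.

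The irony is that $(\ast)$ admits a short direct proof, so the detour through localization is unnecessary. Let $M$ be a maximal independent set of $G-A$ and put $W=V(G)-N_{G}[M]$. Maximality of $M$ in $G-A$ forces $W\subseteq A$, so $W$ is independent and, by its definition, no edges join $W$ to $M$; hence $M\cup W$ is a maximal independent set of $G$, and well-coveredness of $G$ gives $|M|+|W|=\alpha(G)$. If $W\neq\emptyset$, pick $w\in W$: then $M\cup\left(W-\{w\}\right)$ is a maximal independent set of $G-w$ of size $\alpha(G)-1$, whereas $G-w$ is well-covered with $\alpha(G-w)=\alpha(G)$ by $1$-well-coveredness and your auxiliary fact --- a contradiction. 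Hence $W=\emptyset$ and $|M|=\alpha(G)$. This pinpoints exactly where $1$-well-coveredness is spent (well-coveredness of $G-w$ for a single vertex $w$), confirming your diagnosis of where the hypothesis is indispensable, but via a one-step argument rather than an induction on $\alpha$; with this lemma inserted, your overall architecture becomes a complete proof.
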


The $\mathbf{W}_{p}$ graphs exhibit numerous intriguing properties and
characteristics, both in combinatorics and commutative algebra, as detailed in
\cite{HT, W2LM, Pinter, Pinter2, Plum}. To illustrate, consider the polynomial
ring $R=K[x_{1},\ldots,x_{n}]$ in $n$ variables over a field $K$, and let $G$
be a graph with vertex set $V=\{1,\ldots,n\}$. We associate to the graph $G$ a
quadratic square-free monomial ideal
\[
I(G)=(x_{i}x_{j}\mid ij\in E(G))\subseteq R,
\]
which is called the \textit{edge ideal} of $G$. We say that $G$ is
\textit{Cohen--Macaulay} (resp. \textit{Gorenstein}) if $I(G)$ is a
Cohen--Macaulay (resp. Gorenstein) ideal. Notably, every Gorenstein graph is
Cohen--Macaulay, while the converse is not generally true.

It has been established that $G $ is well-covered whenever it is
Cohen--Macaulay \cite[Proposition 6.1.21]{Vi}, and $G $ belongs to
$\mathbf{W}_{2} $ whenever it is Gorenstein \cite[Lemma 2.5]{HT}. For
triangle-free graphs, $\mathbf{W}_{2} $ graphs are also Gorenstein
\cite[Theorem 4.4]{HT}. However, in general, not all $\mathbf{W}_{2} $ graphs
are Gorenstein. An example of a $\mathbf{W}_{p} $ graph that is always
Cohen--Macaulay is the clique corona graph. Let $\mathcal{H} = \{H_{v} : v \in
V(G)\}$ be a family of non-empty graphs indexed by the vertex set of a graph
$G$. The \textit{corona} $G \circ\mathcal{H}$ of $G$ and $\mathcal{H}$ is
defined as the disjoint union of $G$ and $H_{v}$ for each $v \in V(G)$, with
additional edges connecting each vertex $v \in V(G)$ to all the vertices of
$H_{v}$ \cite{FruchtHarary}. When all graphs $H_{v}$ in $\mathcal{H}$ are
complete graphs, $G \circ\mathcal{H} $ is referred to as a \textit{clique
corona graph}. A clique corona graph is not only a well-covered graph
\cite[Theorem 1]{TV92}, but also a Cohen--Macaulay graph \cite[Theorem
2.6]{DP}. If $H_{v} = K_{p}$ for every $v \in V(G)$, we use $G \circ K_{p}$ to
denote $G \circ\mathcal{H} $.

\begin{definition}
\cite{LM2017} For $\lambda>0$, a graph $G$ is $\lambda$-quasi-regularizable
if
\[
\lambda\cdot\left\vert S\right\vert \leq|N_{G}(S)|,
\]
for every independent set $S$ of $G$.
\end{definition}

If $\lambda=1$, then $G$ is said to be a \textit{quasi-regularizable} graph
\cite{Berge}.

\begin{theorem}
\cite{Berge}\label{Berge} Every well-covered graph without isolated vertices
is quasi-regularizable.
\end{theorem}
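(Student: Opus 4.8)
The plan is to establish the equivalent Hall-type inequality directly: for every independent set $S$ of a well-covered graph $G$ without isolated vertices, $|S| \le |N_{G}(S)|$. I would argue by induction on $|S|$. The base case $|S|=1$ is immediate, since a graph without isolated vertices has $|N_{G}(v)|\ge 1$ for every vertex $v$. For the inductive step I would assume the inequality for all independent sets of smaller cardinality and suppose, for contradiction, that $S$ is an independent set with $|N_{G}(S)| < |S|$ of minimum cardinality among all such violators.

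First I would extract the combinatorial structure forced by minimality. Writing $A = N_{G}(S)$, minimality guarantees that every proper subset $S'\subsetneq S$ satisfies $|N_{G}(S')|\ge|S'|$, so the only deficient set in the bipartite graph between $S$ and $A$ is $S$ itself. The defect form of Hall's theorem then yields a matching of this bipartite graph that saturates $A$, together with $|S|-|A|\ge 1$ unsaturated vertices of $S$; a short argument shows the deficiency is exactly $1$, i.e. $|A| = |S|-1$ and $N_{G}(S\setminus\{s\}) = A$ for each $s\in S$.

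The engine of the contradiction is to compare two maximal independent sets of $G$ through the localization $B := V(G)\setminus N_{G}[S]$. Using well-coveredness one checks that $S\cup M$ is a maximal independent set whenever $M$ is a maximal independent set of $G[B]$ (the vertices of $A$ are dominated by $S$, those of $B\setminus M$ by $M$), whence
\[
\alpha(G) = |S| + \alpha\bigl(G[B]\bigr).
\]
Symmetrically, suppose one can find an independent set $Y\subseteq A$ that dominates $N_{G}[S]$; then $Y\cup M'$ is maximal for any maximal independent set $M'$ of $G[B\setminus N_{G}(Y)]$, giving $\alpha(G) = |Y| + \alpha(G[B\setminus N_{G}(Y)])$. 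Since $B\setminus N_{G}(Y)\subseteq B$ forces $\alpha(G[B\setminus N_{G}(Y)])\le\alpha(G[B])$, subtracting the two identities yields $|Y|\ge|S|$, contradicting $|Y|\le|A| = |S|-1$.

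The main obstacle --- and the place where well-coveredness must really be used --- is producing the set $Y$: equivalently, showing that when $|N_{G}(S)|<|S|$ there is a maximal independent set of $G$ disjoint from $S$, i.e. that $S$ can be dominated by an independent subset of its own neighborhood. This is false for general graphs, and it can even fail when $|N_{G}(S)|=|S|$, as in a triangle with a pendant vertex at each corner with $S$ a pair of pendants, so the argument must exploit both well-coveredness and the strict deficiency. I would build $Y$ from the saturating matching furnished above, using the induction hypothesis on the tight proper subset $S\setminus\{s\}$ (which admits a perfect matching to $A$) to dominate $S$ by an independent set inside $A$; verifying that this construction survives the internal edges of $G[A]$ is the delicate step. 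A fallback is to invoke Berge's structural characterization of quasi-regularizable graphs via spanning subgraphs whose components are edges and odd cycles, and to produce such a spanning subgraph from a family of pairwise disjoint maximum independent sets guaranteed by well-coveredness.
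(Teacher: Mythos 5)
A preliminary remark: the paper does not prove Theorem \ref{Berge} at all --- it is quoted from Berge's 1982 paper --- so your attempt has to be judged on its own merits rather than against an internal proof. Much of your scaffolding is correct. Reducing to the Hall-type inequality $|S|\leq|N_{G}(S)|$ is immediate here, since that inequality is the paper's \emph{definition} of quasi-regularizability (no appeal to perfect $2$-matchings is needed); the minimal-counterexample analysis ($|N_{G}(S)|=|S|-1$ and $N_{G}(S\setminus\{s\})=N_{G}(S)$ for every $s\in S$) is right; the identity $\alpha(G)=|S|+\alpha(G[B])$ is exactly Lemma \ref{CP}(2); and your ``engine'' is valid: if an independent $Y\subseteq N_{G}(S)$ dominating $S$ existed, any maximal independent set $W\supseteq Y$ would be disjoint from $S$, and well-coveredness plus $\alpha(G)=|S|+\alpha(G[B])$ would force $|W\cap N_{G}(S)|\geq|S|>|N_{G}(S)|$, a contradiction.

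The genuine gap is the one you flag yourself and then do not close: producing $Y$. This is not a ``delicate step'' left to routine verification --- it is essentially the entire theorem, and neither of your proposed routes reaches it. The matching route gives no purchase: a perfect matching from $S\setminus\{s\}$ onto $N_{G}(S)$ says nothing about the edges inside $G[N_{G}(S)]$, and since $|N_{G}(S)|<|S|$, any dominating $Y$ must contain vertices each dominating two or more vertices of $S$; nothing in your argument produces such vertices, let alone pairwise non-adjacent ones. Your own triangle-with-pendants example is exactly this obstruction, and you never actually use the deficiency-one structure to exclude it. (Indeed, since minimal counterexamples do not exist, the lemma you need is vacuously true, which is a sign that it cannot serve as an intermediate step proved ``from the structure''.) The fallback is unsound as stated: well-covered graphs need \emph{not} admit pairwise disjoint maximum independent sets covering the vertex set --- the triangle with a pendant at each corner is well-covered, yet no two of its maximum independent sets are disjoint --- so there is no such family from which to build a spanning union of edges and odd cycles.

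The good news is that your framework is repairable by weakening disjointness and letting induction absorb the overlap. Pick any $a\in N_{G}(S)$ (possible since $S$ has no isolated vertices), let $M_{a}$ be a maximal independent set of $G[B\setminus N_{G}(a)]$, and extend $\{a\}\cup M_{a}$ to a maximal independent set $W$ of $G$. Then $W\cap B=M_{a}$, the set $Y=W\cap N_{G}(S)$ contains $a$, and $Z=W\cap S$ is a \emph{proper} subset of $S$ (if $Z=S$, then $a\in N_{G}(Z)$ would have a neighbor inside $W$). Well-coveredness gives $|Z|+|Y|+|M_{a}|=\alpha(G)=|S|+\alpha(G[B])\geq|S|+|M_{a}|$, hence $|Y|+|Z|\geq|S|$; since $N_{G}(Z)\subseteq N_{G}(S)$ and $N_{G}(Z)\cap Y=\emptyset$, strong induction on $|S|$ applied to $Z$ yields $|N_{G}(S)|\geq|Y|+|N_{G}(Z)|\geq|Y|+|Z|\geq|S|$. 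This keeps your localization-plus-counting strategy intact while eliminating precisely the unproved step.
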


It is noteworthy pointing out that the structure of $\lambda$%
-quasi-regularizable graph implies the correlation between the number of
vertices with the independence number. In particular, by the theorem above,
one obtains that $n\left(  G\right)  \geq2\alpha\left(  G\right)  $ holds for
every well-covered graph $G$. Furthermore, if $G$ is a $\mathbf{W}_{p}$ graph
with $p\geq2$, then it is $(p-1)$-quasi-regularizable \cite[Theorem 2.6
(iii)]{DLMP}. We have also conjectured that $G$ is $p$-quasi-regularizable
\cite[Conjecture 2.7]{DLMP}. However, it turned out that this assertion does
not hold in general. For example, $C_{5}\in\mathbf{W}_{2}$, but it is not
$2$-quasi-regularizable. Nonetheless, in Section \ref{sec2}, we confirm the following.

\textbf{Theorem 2.8} \textit{Let }$G$\textit{\ be a connected }$W_{p}%
$\textit{\ graph, where }$p\neq2$\textit{. Then }$G$\textit{\ is }%
$p$\textit{-quasi-regularizable if and only if }$n(G)\geq(p+1)\cdot\alpha
(G)$\textit{.}

Beyond exploring graph structures, graph theory presents numerous intriguing
problems related to log-concavity, a concept that has deep implications in
combinatorics and algebra \cite{S2}. For instance, it is well-known that the
matching polynomial of a graph has only real zeros, making it log-concave
\cite{HL72}. This property is significant for understanding graph matchings
and their applications in network theory. Recently, a major breakthrough was
achieved with the resolution of the log-concavity of the chromatic polynomial
of a graph \cite{Huh12}, which has important implications for graph coloring
and phase transitions in statistical physics. Another key polynomial
associated with a graph is the \textit{independence polynomial} of a graph
$G$, denoted $I(G;x) $, which is defined in \cite{GU2} as follows:
\[
I(G;x)=\sum_{k=0}^{\alpha(G)} s_{k} x^{k} = s_{0} + s_{1}x + \cdots+
s_{\alpha(G)}x^{\alpha(G)},
\]
where $s_{k}$ represents the number of independent sets of cardinality $k$ in
the graph $G$. The independence polynomial $I(G;x)$ is said to be:

\begin{itemize}
\item \textit{log-concave} if $s_{k}^{2}\geq s_{k-1}\cdot s_{k+1}$ for all
$1\leq k\leq\alpha(G)-1$; or

\item \textit{unimodal} if there exists an index $0\leq k\leq\alpha(G)$ such
that
\[
s_{0}\leq\cdots\leq s_{k-1}\leq s_{k}\geq s_{k+1}\geq\cdots\geq s_{\alpha(G)}.
\]

\end{itemize}

A well-known result by Chudnovsky and Seymour in \cite{CS} states that all the
roots of $I(G;x) $ are real whenever $G $ is a claw-free graph, which also
implies the log-concavity of $I(G;x) $ for all claw-free graphs $G $. The
study of the independence polynomial is a rich area with extensive literature,
including works addressing the log-concavity problems (see \cite{AMSE, CS, HL,
LM07, LM17, LM2017, Zhu, Zhu1} and their references).

\begin{lemma}
\cite{KG} \label{log-uni} If $P(x)$ is log-concave and $Q(x)$ is unimodal,
then $P(x)\cdot Q(x)$ is unimodal, while the product of two log-concave
polynomials is log-concave.
\end{lemma}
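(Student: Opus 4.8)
The plan is to prove the slightly stronger statement in which every sequence involved has no internal zeros (equivalently, positive coefficients throughout its support). This is harmless for our application, since the coefficient sequence $(s_{k})$ of an independence polynomial satisfies $s_{k}>0$ for all $0\leq k\leq\alpha(G)$, and it is genuinely needed: both $1+x$ and $1+x^{3}$ are log-concave in the bare sense, yet $(1+x)(1+x^{3})=1+x+x^{3}+x^{4}$ has coefficient vector $(1,1,0,1,1)$, which is not. Writing $P(x)=\sum_{i}a_{i}x^{i}$, $Q(x)=\sum_{j}b_{j}x^{j}$ and $P(x)Q(x)=\sum_{k}c_{k}x^{k}$, so that $c_{k}=\sum_{i}a_{i}b_{k-i}$, the engine of both assertions is a single translation into linear algebra: a nonnegative sequence $(a_{i})$ with no internal zeros is log-concave if and only if its bi-infinite Toeplitz matrix $T_{a}=(a_{i-j})_{i,j}$ is totally positive of order $2$, i.e. every $2\times2$ minor of $T_{a}$ is nonnegative. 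First I would record this equivalence as a preliminary lemma; its forward direction requires promoting the adjacent inequalities $a_{i}^{2}\geq a_{i-1}a_{i+1}$ to all minors $a_{i-j}a_{i'-j'}-a_{i-j'}a_{i'-j}\geq0$ with $i<i'$ and $j<j'$, and it is precisely here that the absence of internal zeros is used.

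For the product of two log-concave polynomials, the key observation is that convolution corresponds to matrix multiplication of Toeplitz matrices: a direct computation gives $T_{a}T_{b}=T_{c}$, since $(T_{a}T_{b})_{i,j}=\sum_{\ell}a_{i-\ell}b_{\ell-j}=\sum_{m}a_{m}b_{(i-j)-m}=c_{i-j}$. By the Cauchy--Binet formula, each $2\times2$ minor of $T_{a}T_{b}$ is a finite sum of products of a $2\times2$ minor of $T_{a}$ with a $2\times2$ minor of $T_{b}$, hence nonnegative whenever $T_{a}$ and $T_{b}$ are totally positive of order $2$. Thus $T_{c}$ is totally positive of order $2$, and reading off the minor on rows $\{k,k+1\}$ and columns $\{0,1\}$ gives $c_{k}^{2}-c_{k-1}c_{k+1}\geq0$. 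This proves that the product of two log-concave polynomials is log-concave; combined with the fact that a log-concave sequence without internal zeros is unimodal, it also recovers unimodality of such products.

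For the product of a log-concave $P$ with a unimodal $Q$, I would argue through the first-difference sequence. A telescoping computation gives $c_{k}-c_{k-1}=\sum_{j}a_{k-j}\,(b_{j}-b_{j-1})$, so the increment sequence of $(c_{k})$ is the image of $u_{j}:=b_{j}-b_{j-1}$ under the Toeplitz operator $T_{a}$. Unimodality of $Q$ says precisely that $(u_{j})$ has at most one sign change, from $+$ to $-$. The plan is then to invoke the variation-diminishing property of totally positive kernels: since $T_{a}$ is totally positive of order $2$, it sends any sequence with at most one sign change to a sequence with at most one sign change, preserving the orientation of that change. Consequently $c_{k}-c_{k-1}$ is first nonnegative and then nonpositive, which is exactly unimodality of $(c_{k})$.

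The routine parts are the Toeplitz identity and the Cauchy--Binet bookkeeping; the two steps I expect to demand real care are the preliminary equivalence (handling the internal-zeros hypothesis so that all $2\times2$ minors, not merely the adjacent ones, are controlled) and, above all, the variation-diminishing step in the unimodal case. The latter is the true heart of the statement, and is the part for which the Keilson--Gerber reference is invoked. I would either cite the variation-diminishing property in the sharp form that also fixes the resulting sign pattern, or prove the order-$2$ instance I need directly by an induction that decomposes $Q$ into its level sets $\{j:b_{j}\geq t\}$, which are nested intervals by unimodality, and tracks how convolving each interval indicator with the log-concave $(a_{i})$ keeps the cumulative increment sequence nonnegative-then-nonpositive.
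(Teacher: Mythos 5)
The paper offers no proof of this lemma at all: it is imported verbatim from Keilson--Gerber \cite{KG}, so there is no internal argument to compare yours against, and I can only judge the proposal on its own merits. On those merits it is sound, and your preliminary observation is a genuine improvement on the statement as printed: without a no-internal-zeros (equivalently, positivity on the support) hypothesis, the second assertion is literally false, as your example $(1+x)(1+x^{3})=1+x+x^{3}+x^{4}$ shows, and the restriction costs nothing in this paper because every independence polynomial has $s_{k}>0$ for all $0\le k\le\alpha(G)$. Your main engine is the classical total-positivity route: log-concave with interval support is equivalent to the Toeplitz matrix $T_{a}$ being totally positive of order $2$ (interval support is exactly what lets adjacent minors control all $2\times2$ minors), the identity $T_{a}T_{b}=T_{c}$ holds for the convolution $c$ with all sums finite since polynomials have finite support, and Cauchy--Binet then closes $\mathrm{TP}_{2}$ under products; reading off the minor on rows $\{k,k+1\}$, columns $\{0,1\}$ gives $c_{k}^{2}\ge c_{k-1}c_{k+1}$. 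All of this is correct.

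For the log-concave-times-unimodal claim, your reduction $c_{k}-c_{k-1}=\sum_{j}a_{k-j}(b_{j}-b_{j-1})$ is correct, and the variation-diminishing property of $\mathrm{TP}_{2}$ kernels that you invoke is a true classical theorem (Schoenberg, Karlin), in the sharp form that preserves the orientation of a single sign change; citing it legitimately finishes the proof, and this is essentially how the cited result is established in the literature. If you want the step self-contained, the order-$2$ instance has a short direct proof: with $u_{j}\ge0$ for $j\le m$, $u_{j}\le0$ for $j>m$, and $d=T_{a}u$, set $N_{k}=\sum_{j'>m}a_{k-j'}|u_{j'}|$ and $P_{k}=\sum_{j\le m}a_{k-j}u_{j}$; then for $k<k'$ one has
\[
d_{k'}N_{k}-d_{k}N_{k'}=P_{k'}N_{k}-P_{k}N_{k'}=\sum_{j\le m}\sum_{j'>m}u_{j}\,|u_{j'}|\left(a_{k'-j}a_{k-j'}-a_{k-j}a_{k'-j'}\right)\le0
\]
by $\mathrm{TP}_{2}$, and $d_{k}<0$ forces $N_{k}>0$, whence $d_{k'}\le0$; so once the increments go negative they stay nonpositive, which is unimodality. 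I would recommend this over your fallback level-set induction, which needs real care: unimodality is not preserved under sums (e.g.\ $(3,1,0)+(0,1,3)=(3,2,3)$), so the nested-interval decomposition alone does not obviously control the cumulative increment sequence, and that part of your sketch is the only place where the argument is not yet watertight.
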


In \cite{AMSE}, Alavi, Malde, Schwenk, and Erd\"os proved that for any
permutation $\pi$ of $\{1,2,\dots,\alpha(G)\}$, there is a graph $G$ such
that
\[
s_{\pi(1)} < s_{\pi(2)} < \cdots< s_{\pi(\alpha(G))}.
\]

This result highlights the varied behaviors that graph polynomials can
display. Additionally, they conjectured that the independence polynomial
$I(G;x)$ is unimodal for any tree or forest $G$. This conjecture remains
unresolved and continues to inspire ongoing research. Recently, it was
demonstrated that there are infinite families of trees whose independence
polynomials are not log-concave \cite{KadLev}. This finding challenges earlier
assumptions and suggests new directions for exploring the conditions under
which log-concavity and unimodality hold. It is also worth noting that the
independence polynomials of some well-covered graphs are not log-concave
\cite{LM2017}, \cite{MT03}.

Revisiting the open conjecture of the unimodality of independence polynomials
of trees stated in \cite{AMSE}, it is known that $I(G;x)$ is log-concave
whenever $G$ is a well-covered spider \cite{LM04}. Additionally, Radcliffe
verified that the independence polynomials of trees with up to $25$ vertices
are log-concave \cite{Radcliffe}. Zhu and Chen, in \cite{Zhu1}, applied
factorization methods to show the log-concavity of independence polynomials in
some special cases of trees. In contrast, in \cite{KadLev}, the authors
demonstrated that there exist exactly two trees of order $26$ whose
independence polynomials are not log-concave.

It is known that for every well-covered graph $G$ whose $\mathrm{girth}%
(G)\ge6$ and $G\ne\{C_{7}, K_{1}\}$ then $G$ is well-covered if and only if $G
= H\circ K_{1}$ for some graph $H$ \cite{FHN93}. The conjecture about the
unimodality of $I(G\circ K_{1};x)$ for all graphs $G $ was stated in
\cite[Conjecture 3.3]{LM03}, validated in the case $\alpha\left(  G\right)
\leq4$ in \cite[Conjecture 3.3]{LM03} and extended to $5\leq\alpha\left(
G\right)  \leq8$ in \cite{ChenWnag2010}. In addition, the unimodality of the
independence polynomial of clique corona graphs $G\circ K_{p}$ was
investigated for various classes of graphs such as: $G$ is a claw-free graph
\cite[Corollary 3.12]{LM17}; $G$ is a quasi-regularizable graph with
$\alpha(G)\leq4$; $G$ is an arbitrary graph and $p$ satisfying $(p+1)(p+2)\geq
n(G)+1$; and $G$ is quasi-regularizable graph with $\alpha(G)\leq p+1$
\cite{DLMP}. As an application, it was proven that $I(S_{n}\circ K_{p};x)$ is
unimodal for all $p\geq\sqrt{n+1}-2$, where $S_{n}$ is the complete bipartite
graph $K_{1,n}$ \cite{DLMP}. Furthermore, for any graph $H$, taking into
account that $H\circ K_{p}\in\mathbf{W}_{p}$ \cite[Corollary 2.3]{DLMP}, the
log-concavity of independent polynomials of $\mathbf{W}_{p}$ graphs can be
further employed in this specific ones. Consequently, it yields that $I(G\circ
K_{p};x)$ is log-concave whenever $p$ is large enough in correspondence to
$n(G).$

The paper is organized as follows. In Section \ref{sec2}, we study some
structural properties of $\mathbf{W}_{p}$ graphs. Section \ref{sec3} presents
findings related to log-concave properties of $I(G;x)$ for $\mathbf{W}_{p}$
graphs $G$. As an application, the remainder of Section \ref{sec3} is
dedicated to examining the log-concavity of $I(H\circ K_{p};x)$ for
sufficiently large $p$. Finally, in Conclusion, we suggest potential
directions for future research.

\section{Quasi-regularizability of $\mathbf{W}_{p}$ graphs}

\label{sec2} The following characterization of the localization of a
$\mathbf{W}_{p} $ graph serves as a valuable tool in establishing the proof of
our main theorem in this section. Recall that $\mathbf{W}_{1}$ denotes the
family of all well-covered graphs. Several important results are known, as
summarized below:

\begin{lemma}
\label{CP} Let $G$ be a well-covered graph. Then

\begin{enumerate}
\item \cite{CP, Plum} $G_{v}$ is well-covered and $\alpha(G_{v})=\alpha(G)-1$
for all $v\in V(G)$.

\item \cite[Lemma 1]{FHN93} If $S$ is an independent set of $G$, then $G_{S}$
is well-covered and $\alpha(G)=\alpha(G_{S})+|S|$.
\end{enumerate}
\end{lemma}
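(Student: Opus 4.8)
The plan is to prove part (1) directly via a cardinality-preserving correspondence between maximal independent sets, and then deduce part (2) by induction on $|S|$ using part (1) together with an identity relating iterated localizations.

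For part (1), first I would establish the key bijection: the map $I\mapsto I\cup\{v\}$ sends maximal independent sets of $G_{v}=G-N_{G}[v]$ to maximal independent sets of $G$ containing $v$. If $I$ is independent in $G_{v}$, then since every vertex of $I$ lies outside $N_{G}[v]$, no vertex of $I$ is adjacent to $v$, so $I\cup\{v\}$ is independent in $G$. I would then check that $I$ is maximal in $G_{v}$ if and only if $I\cup\{v\}$ is maximal in $G$: any vertex $w$ that could extend $I\cup\{v\}$ in $G$ is non-adjacent to $v$, hence lies in $V(G_{v})$ and would already extend $I$ there, and conversely. Since $G$ is well-covered and $v$ belongs to at least one maximal independent set, every maximal independent set of $G$ containing $v$ has size $\alpha(G)$; transporting through the bijection, every maximal independent set of $G_{v}$ has size $\alpha(G)-1$. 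This shows $G_{v}$ is well-covered with $\alpha(G_{v})=\alpha(G)-1$.

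For part (2), I would argue by induction on $|S|$, the case $|S|=0$ being trivial. Writing $S=\{v\}\cup S'$ with $v\in S$ and $S'=S\setminus\{v\}$, the crucial step is the identity $G_{S}=(G_{v})_{S'}$. To verify it I would compare the vertex sets removed from $G$ in each case. Because $S$ is independent, $S'\cap N_{G}[v]=\emptyset$, so $S'$ is an independent set of $G_{v}$, and $N_{G_{v}}(S')=N_{G}(S')\setminus N_{G}[v]$; adjoining the vertices deleted to form $G_{v}$ gives $N_{G}[v]\cup S'\cup N_{G_{v}}(S')=S\cup N_{G}(v)\cup N_{G}(S')=N_{G}[S]$, which is exactly the set removed to form $G_{S}$. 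Applying part (1) to $v$ yields that $G_{v}$ is well-covered with $\alpha(G_{v})=\alpha(G)-1$, and the induction hypothesis applied to $G_{v}$ and $S'$ gives well-coveredness of $G_{S}=(G_{v})_{S'}$ together with $\alpha(G_{S})=\alpha(G_{v})-|S'|=\alpha(G)-|S|$.

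The bijection of part (1) and the localization bookkeeping are the only technical points; I expect the main care to be needed in the set-theoretic identity $G_{S}=(G_{v})_{S'}$, ensuring that the neighborhoods deleted across the two-step localization coincide exactly with $N_{G}[S]$, which relies essentially on the independence of $S$.
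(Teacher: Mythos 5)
Your proof is correct, but note that there is nothing in the paper to compare it against: Lemma \ref{CP} is not proved there at all, both parts being imported verbatim from the literature (part (1) from \cite{CP, Plum}, part (2) from \cite[Lemma 1]{FHN93}). Your argument is a sound, self-contained replacement. The bijection $I\mapsto I\cup\{v\}$ between maximal independent sets of $G_{v}$ and maximal independent sets of $G$ containing $v$ is established correctly --- the key observation, which you make, is that any vertex extending $I\cup\{v\}$ in $G$ must avoid $N_{G}[v]$ and hence already lies in $G_{v}$ --- and transporting well-coveredness of $G$ through it immediately gives part (1), including the degenerate case $V(G_{v})=\emptyset$. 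Your inductive step for part (2) rests on the identity $G_{S}=(G_{v})_{S'}$, which you verify correctly: independence of $S$ guarantees $S'\subseteq V(G_{v})$, that $v\notin N_{G}(S')$ and $N_{G}(v)\cap S'=\emptyset$, whence the deleted sets on both sides equal $N_{G}[S]$. It is worth pointing out that this is exactly the localization identity the paper itself uses later, in the proof of Lemma \ref{key2}(iii), to push the $\mathbf{W}_{p}$ analogue of this statement through an induction on $\left\vert S\right\vert$ (there written as $G_{S}=(G_{x})_{S'}$). So your route is not merely valid; it anticipates the paper's own technique for the harder generalization, and what it buys is self-containedness at the cost of bookkeeping that the authors chose to outsource to citations.
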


\begin{lemma}
Let $G$ be a $\mathbf{W}_{2}$ graph with $\alpha(G)>1$. Then
\label{Pinter_lem}

\begin{enumerate}
\item \cite[Theorem 2]{Pinter} $G_{v}$ is also a $\mathbf{W}_{2}$ graph for
all $v\in V(G)$.

\item \cite[Lemma 3.3]{HT} If $S$ is an independent set of $G$ and $\left\vert
S\right\vert <\alpha(G)$, then $G_{S}$ is also a $\mathbf{W}_{2}$ graph.
\end{enumerate}
\end{lemma}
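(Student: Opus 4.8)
The plan is to prove part (1) directly and then obtain part (2) from it by iterating the localization operation. Before starting I would record two standing facts. First, since $\mathbf{W}_{2}\subseteq\mathbf{W}_{1}$, the graph $G$ is well-covered, so by Lemma \ref{CP}(1) the localization $G_{v}$ is well-covered with $\alpha(G_{v})=\alpha(G)-1$. Second, I would note the elementary correspondence between maximum independent sets: if $T$ is a maximum independent set of $G$ with $v\in T$, then $T\setminus\{v\}\subseteq V(G_{v})$ is a maximum independent set of $G_{v}$, because $T$ independent and $v\in T$ force $T\cap N_{G}(v)=\emptyset$, so $T\setminus\{v\}\subseteq V(G)\setminus N_{G}[v]=V(G_{v})$, and $|T\setminus\{v\}|=\alpha(G)-1=\alpha(G_{v})$; conversely $v$ may always be appended to a maximum independent set of $G_{v}$.

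For part (1) I would take two disjoint independent sets $A_{1},A_{2}$ of $G_{v}$ and apply the $\mathbf{W}_{2}$-property of $G$ \emph{twice}. Since $A_{1},A_{2}\subseteq V(G_{v})=V(G)\setminus N_{G}[v]$, no vertex of $A_{1}\cup A_{2}$ is adjacent to $v$, so both $A_{1}\cup\{v\}$ and $A_{2}\cup\{v\}$ are independent in $G$. First I would apply $\mathbf{W}_{2}$ to the disjoint pair $(A_{1}\cup\{v\},\,A_{2})$, obtaining disjoint maximum independent sets $B_{1}\supseteq A_{1}\cup\{v\}$ and $B_{2}\supseteq A_{2}$, and set $S_{1}:=B_{1}\setminus\{v\}$; by the correspondence $S_{1}$ is a maximum independent set of $G_{v}$ containing $A_{1}$, and crucially $S_{1}\cap A_{2}\subseteq B_{1}\cap B_{2}=\emptyset$. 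Then $(A_{2}\cup\{v\},\,S_{1})$ is again a disjoint pair of independent sets (disjoint because $v\notin S_{1}$ and $S_{1}\cap A_{2}=\emptyset$), so a second application of $\mathbf{W}_{2}$ yields disjoint maximum independent sets $C_{2}\supseteq A_{2}\cup\{v\}$ and $C_{1}\supseteq S_{1}$. Setting $S_{2}:=C_{2}\setminus\{v\}$ gives a maximum independent set of $G_{v}$ containing $A_{2}$, and the disjointness $S_{1}\cap S_{2}\subseteq C_{1}\cap C_{2}=\emptyset$ is then automatic.

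The hard part is precisely this last disjointness, and it is what the second application is engineered to deliver. A single application of $\mathbf{W}_{2}$ localizes cleanly only the set into which $v$ has been inserted, since that set avoids $N_{G}(v)$ by independence; its partner may still meet $N_{G}(v)$ and thus fail to remain maximum in $G_{v}$ after deleting $v$. Feeding the already-localized $S_{1}$ back in as one of the two inputs is the device that simultaneously pushes $A_{2}$ into $G_{v}$ and inherits disjointness from $S_{1}$ for free. I would also remark that running the very same construction with $A_{1}=A_{2}=\emptyset$ produces two disjoint maximum independent sets of $G_{v}$, whence $n(G_{v})\geq 2\alpha(G_{v})\geq 2$; this dispatches the size requirement in the definition of $\mathbf{W}_{2}$ and rules out any degenerate $G_{v}$.

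Finally, I would deduce part (2) from part (1) by induction on $|S|$, exploiting that localization is iterative: for an independent set $S=\{v_{1},\dots,v_{k}\}$ one has $G_{S}=(\cdots(G_{v_{1}})_{v_{2}}\cdots)_{v_{k}}$, since deleting $N_{G}[v_{1}]$ and then the remaining neighbours of $v_{2}$ removes exactly $N_{G}[v_{1}]\cup N_{G}[v_{2}]$, and likewise at each stage (independence of $S$ guarantees each $v_{i+1}$ survives into the previous localization). Each step lowers the independence number by one, so the intermediate graph $G_{\{v_{1},\dots,v_{i}\}}$ is $\mathbf{W}_{2}$ with independence number $\alpha(G)-i$, and part (1) applies as long as this is at least $2$, that is for $i\leq\alpha(G)-2$. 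This is exactly ensured by the hypothesis $|S|=k<\alpha(G)$, since the final localization is applied at $i=k-1\leq\alpha(G)-2$. Hence $G_{S}$ is a $\mathbf{W}_{2}$ graph, which completes the plan.
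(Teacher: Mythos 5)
Your argument is correct. Note, though, that the paper does not prove this lemma at all --- both parts are imported as known results, part (1) from \cite{Pinter} and part (2) from \cite{HT} --- so yours is a self-contained alternative rather than a variant of an in-paper proof. The core of your part (1), applying the $\mathbf{W}_2$ property a second time with the already-localized $S_1$ fed back as an input so that $S_1\cap S_2\subseteq C_1\cap C_2=\emptyset$ comes for free, is exactly the right fix for the failure of a single application (the partner set $B_2$ may meet $N_G(v)$, so it need not survive into $G_v$). The supporting steps check out: Lemma \ref{CP}(1) gives $\alpha(G_v)=\alpha(G)-1$, so deleting $v$ from a maximum independent set of $G$ through $v$ yields a maximum independent set of $G_v$; the identity $G_S=(G_{v_1})_{S-\{v_1\}}$ (which the paper itself uses when proving Lemma \ref{key2}(iii)) justifies the iteration in part (2); and your bound $i\le\alpha(G)-2$ on the intermediate localizations matches the hypothesis $|S|<\alpha(G)$ exactly. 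For comparison, when the paper needs the general-$p$ analogue (Lemma \ref{key2}(i)), it takes the present lemma as the base case $p=2$ and inducts on $p$ via Staples' deletion characterization (Lemma \ref{lem_equiv}); your bootstrap rests on no external base case, and it iterates directly --- successively replacing each $A_j\cup\{v\}$ and feeding back the localized sets $S_1,\dots,S_{j-1}$ --- to give a uniform proof of Lemma \ref{key2}(i) for every $p$, which would make the appeals to \cite{Pinter} and \cite{HT} dispensable.
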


Furthermore, Staples offered essential evaluations for the general case of the
$\mathbf{W}_{p}$ class.

\begin{lemma}
\cite{Staples} \label{lem_equiv} Let $p\geq2$. Then $G$ is a $\mathbf{W}_{p}$
graph if and only if $G-v$ is a $\mathbf{W}_{p-1}$ graph and $\alpha
(G)=\alpha(G-v)$ for all $v\in V(G)$.
\end{lemma}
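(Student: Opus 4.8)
The plan is to prove the two implications separately, writing $\alpha=\alpha(G)$ and using throughout that membership in $\mathbf{W}_{q}$ forces the order bound $n\ge q$ and, via the chain $\mathbf{W}_{1}\supseteq\cdots\supseteq\mathbf{W}_{q}$, well-coveredness. The forward implication will be clean; the reverse is where the substance lies.

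For the forward implication, assume $G\in\mathbf{W}_{p}$ with $p\ge 2$ and fix $v\in V(G)$. The trick is to exploit empty sets as admissible independent sets. First I would feed the $p$ pairwise disjoint independent sets $\{v\},\emptyset,\ldots,\emptyset$ into the defining property: the resulting disjoint maximum independent sets $S_{1}\ni v,\,S_{2},\ldots,S_{p}$ satisfy $v\notin S_{2}$ (since $S_{2}\cap S_{1}=\emptyset$), so $S_{2}\subseteq V(G-v)$ witnesses $\alpha(G-v)\ge\alpha$, whence $\alpha(G-v)=\alpha$; here $p\ge2$ is exactly what guarantees the existence of a second set $S_{2}$. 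Next, to see $G-v\in\mathbf{W}_{p-1}$, I would take any $p-1$ pairwise disjoint independent sets $B_{1},\ldots,B_{p-1}$ of $G-v$, observe that $\{v\},B_{1},\ldots,B_{p-1}$ are $p$ pairwise disjoint independent sets of $G$, and extend them to disjoint maximum independent sets $S_{0}\ni v,\,S_{1},\ldots,S_{p-1}$. Each $S_{i}$ with $i\ge1$ avoids $v$, lies in $G-v$, and has cardinality $\alpha=\alpha(G-v)$, hence is maximum in $G-v$; together with $n(G-v)=n-1\ge p-1$ this yields $G-v\in\mathbf{W}_{p-1}$.

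For the reverse implication I would argue by induction on $p$. The hypotheses at level $p$ (namely $G-v\in\mathbf{W}_{p-1}$ and $\alpha(G-v)=\alpha$ for all $v$) first yield that $G$ is well-covered: a maximal independent set $M\ne V(G)$ remains maximal in $G-v$ for any $v\notin M$, and well-coveredness of $G-v$ with $\alpha(G-v)=\alpha$ forces $|M|=\alpha$. Moreover, since $\mathbf{W}_{p-1}\subseteq\mathbf{W}_{p-2}$, the same hypotheses hold one level lower, so the inductive hypothesis gives $G\in\mathbf{W}_{p-1}$, with the base case $p=2$ reducing to the $\mathbf{W}_{2}$--$1$-well-covered correspondence of Theorem \ref{th4}. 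Granting $G\in\mathbf{W}_{p-1}$, I would take $p$ pairwise disjoint independent sets $A_{1},\ldots,A_{p}$, pick $v\in A_{p}$ (if every $A_{i}$ is empty the same difficulty of exhibiting $p$ pairwise disjoint maximum independent sets arises and is subsumed below), and use $G-v\in\mathbf{W}_{p-1}$ to extend $A_{1},\ldots,A_{p-1}$ to pairwise disjoint maximum independent sets $S_{1},\ldots,S_{p-1}$, each lying in $G-v$ and hence maximum in $G$.

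The main obstacle is the final step: producing a maximum independent set $S_{p}\supseteq A_{p}$ that is disjoint from $S_{1}\cup\cdots\cup S_{p-1}$. Nothing in the construction so far prevents the extensions $S_{i}$ from swallowing vertices of $A_{p}\setminus\{v\}$ or from leaving no room for a maximum set through $v$, so the extensions of $A_{1},\ldots,A_{p-1}$ must be \emph{steered away} from $A_{p}$. To control this I would pass to the localization $G_{v}=G-N_{G}[v]$, which by Lemma \ref{CP} is well-covered with $\alpha(G_{v})=\alpha-1$, so that maximum independent sets of $G$ through $v$ correspond exactly to maximum independent sets of $G_{v}$; the task then becomes choosing the $\mathbf{W}_{p-1}$-extension of $A_{1},\ldots,A_{p-1}$ compatibly with some maximum independent set of $G_{v}$ containing $A_{p}\setminus\{v\}$. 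This is where the full strength of the deck hypothesis, valid for \emph{every} deleted vertex, must be invoked: I expect the conflicts to be removable by an exchange argument that reroutes each $S_{i}$ through $G_{v}$, and I anticipate this disjointness-by-exchange to be the genuinely delicate part, with the base case $p=2$ (equivalently, that a $1$-well-covered graph carries two disjoint maximum independent sets) already capturing its essential difficulty.
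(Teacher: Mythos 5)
First, a point of reference: the paper does not prove this lemma at all --- it is quoted directly from \cite{Staples} --- so there is no in-paper argument to compare yours against, and your proposal must stand on its own. Your forward implication does stand. Feeding $\{v\},\emptyset,\ldots,\emptyset$ (or, if one prefers non-empty sets, $p$ distinct singletons, which exist since $n(G)\geq p$) into the $\mathbf{W}_{p}$ property correctly yields $\alpha(G-v)=\alpha(G)$; and your argument that any $p-1$ pairwise disjoint independent sets of $G-v$ extend to pairwise disjoint maximum independent sets \emph{of $G-v$} --- because the extensions in $G$ are forced to avoid $v$ by disjointness from $S_{0}\ni v$ and have size $\alpha(G)=\alpha(G-v)$ --- is complete and correct.

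The reverse implication, however, contains a genuine gap, and it sits exactly where the content of the lemma lies. Your preliminary reductions are sound: the proof that $G$ is well-covered, the induction on $p$ giving $G\in\mathbf{W}_{p-1}$ (with base case legitimately delegated to Theorem \ref{th4}, after noting that the hypotheses exclude isolated vertices), and the use of $G-v\in\mathbf{W}_{p-1}$ to extend $A_{1},\ldots,A_{p-1}$ to pairwise disjoint maximum independent sets $S_{1},\ldots,S_{p-1}$. But the conclusion $G\in\mathbf{W}_{p}$ then hinges entirely on producing a maximum independent set $S_{p}\supseteq A_{p}$ disjoint from $S_{1}\cup\cdots\cup S_{p-1}$, and at precisely that point you only write that you ``expect the conflicts to be removable by an exchange argument'' and ``anticipate'' that this is the delicate part. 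No such argument is given: you do not specify what is exchanged, why the rerouting terminates, or why well-coveredness of $G_{v}$ with $\alpha(G_{v})=\alpha(G)-1$ (Lemma \ref{CP}) leaves room for a maximum independent set of $G_{v}$ avoiding the $S_{i}$ --- note that each $S_{i}$, being maximal, necessarily meets $N(v)$, and nothing you have established prevents the $S_{i}$ from swallowing $A_{p}\setminus\{v\}$ or from covering the relevant portion of $G_{v}$. Since the forward half is routine, the entire difficulty of Staples' theorem is concentrated in this disjoint-extension step; what you have is a correct easy half plus a plan, not a proof, for the hard half. To close it you would need either Staples' actual machinery (for instance, an induction on $\alpha(G)$ that first shows the hypotheses are inherited by the localization $G_{v}$, in the spirit of Lemma \ref{key2}, which cannot simply be cited here because the paper derives it \emph{from} this lemma) or a fully worked-out exchange lemma; as written, the implication is not established.
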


\begin{lemma}
\cite{Staples} \label{lem_key} Let $p\geq2$ and let $G$ be a connected
$\mathbf{W}_{p}$ graph of order $n$ with the independence number $\alpha$.
Then the following assertions are true.

\begin{enumerate}
\item[(i)] $n\geq p\cdot\alpha$. In particular, $n=p\cdot\alpha$ if and only
if $G$ is a complete graph on $p$ vertices.

\item[(ii)] If $\alpha>1$, then $\delta(G)\geq p$.
\end{enumerate}
\end{lemma}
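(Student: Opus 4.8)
The plan is to establish the inequality in (i) first, then assertion (ii), and finally to feed (ii) back into the converse of the equality in (i). For the inequality in (i), I would simply apply the $\mathbf{W}_{p}$ property to the trivial family $A_{1}=\cdots=A_{p}=\emptyset$: this yields $p$ pairwise disjoint maximum independent sets $S_{1},\ldots,S_{p}$, each of cardinality $\alpha$, so that $n\geq\left\vert S_{1}\cup\cdots\cup S_{p}\right\vert =p\cdot\alpha$. The forward implication of the equality is then immediate, since $G=K_{p}$ forces $\alpha=1$ and $n=p=p\cdot\alpha$; the converse I defer, as it seems to require the degree bound.

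For (ii) I would first extract the weaker estimate $\deg_{G}(v)\geq p-1$, which uses no connectivity. Iterating Lemma \ref{lem_equiv} shows that deleting any set $U$ of at most $p-1$ vertices preserves the independence number, i.e. $\alpha(G-U)=\alpha$; equivalently, no set of at most $p-1$ vertices can meet every maximum independent set of $G$, for otherwise its deletion would force $\alpha(G-U)<\alpha$. Now the closed neighborhood $N_{G}[v]$ does meet every maximum (hence maximal) independent set $S$: if $v\notin S$, then by maximality $S$ contains a neighbor of $v$. Consequently $\left\vert N_{G}[v]\right\vert \geq p$, that is $\deg_{G}(v)\geq p-1$. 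Connectivity is genuinely indispensable for the sharp bound, since the disconnected graph $2K_{p}$ lies in $\mathbf{W}_{p}$ and has $\alpha=2$ but $\delta=p-1$.

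The heart of the matter is upgrading $p-1$ to $p$, and here I would induct on $p$. In the inductive step ($p\geq3$) I fix $v$ and a neighbor $x$ and pass to the component $C$ of $v$ in $G-x$; by Lemma \ref{lem_equiv}, $G-x\in\mathbf{W}_{p-1}$ with $\alpha(G-x)=\alpha$, and since one checks that every component of a $\mathbf{W}_{p-1}$ graph is itself a $\mathbf{W}_{p-1}$ graph on at least $p-1$ vertices, $C$ is a connected $\mathbf{W}_{p-1}$ graph. If $\alpha(C)\geq2$, the inductive hypothesis gives $\delta(C)\geq p-1$, whence $\deg_{G}(v)\geq\deg_{C}(v)+1\geq p$. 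If $C$ is a clique $K_{m}$, then $\deg_{G}(v)\geq m$; the case $m\geq p$ is done, and in the residual case $m=p-1$ the neighborhood $N_{G}(v)$ consists exactly of the $p-2$ other vertices of $C$ together with $x$, so deleting one clique-neighbor $w\neq x$ (which keeps the graph connected, as $v$ stays joined to $x$) and applying the inductive hypothesis to $G-w$ would give $\delta(G-w)\geq p-1$, contradicting $\deg_{G-w}(v)=p-2$. The step thus reduces everything to the base case $p=2$, namely that a connected $1$-well-covered graph with $\alpha\geq2$ has no pendant vertex; this base case is the step I expect to be the main obstacle, and I would settle it with the $\mathbf{W}_{2}$-specific tools, arguing that a pendant $v$ at $x$ makes $G-v$ fail to be well-covered (equivalently, violates $G_{v}\in\mathbf{W}_{2}$ from Lemma \ref{Pinter_lem}), contradicting $G-v\in\mathbf{W}_{1}$ via Lemma \ref{lem_equiv}.

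Finally, for the converse of the equality in (i), I would suppose $n=p\cdot\alpha$ and, for contradiction, $\alpha\geq2$. By (ii) every vertex has degree at least $p$, so for any $v$ the localization $G_{v}=G-N_{G}[v]$ satisfies $n(G_{v})=n-\left\vert N_{G}[v]\right\vert \leq p\alpha-(p+1)=p(\alpha-1)-1$. On the other hand $\alpha(G_{v})=\alpha-1$ by Lemma \ref{CP}, so as soon as $G_{v}$ inherits enough structure to contain $p$ pairwise disjoint maximum independent sets — a localization analogue of Lemma \ref{Pinter_lem} for general $p$, i.e. $G_{v}\in\mathbf{W}_{p}$ — the inequality in (i) forces $n(G_{v})\geq p(\alpha-1)$, a contradiction; hence $\alpha=1$ and $G=K_{p}$. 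I expect establishing this localization bound to be the second genuinely hard point, mirroring the degree upgrade in (ii): both amount to recovering the one extra unit of $\mathbf{W}_{p}$-structure that the elementary counting just misses.
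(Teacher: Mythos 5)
Your proposal cannot be compared against an internal argument, because the paper never proves this lemma: it is quoted verbatim from Staples \cite{Staples}, and the paper only uses it (e.g., inside Lemma \ref{key2} and Theorem \ref{mthm}). Judged as a reconstruction from the paper's other ingredients, your architecture is sound and, importantly, non-circular. The counting for the inequality in (i) (extending $p$ empty sets, or equivalently $p$ distinct singletons as in the paper's Lemma \ref{lem-connected}) is correct. Your two-stage proof of (ii) also works: iterating Lemma \ref{lem_equiv} indeed gives $\alpha(G-U)=\alpha$ whenever $\left\vert U\right\vert \leq p-1$, hence $\deg_G(v)\geq p-1$ with no connectivity hypothesis, and both cases of your induction on $p$ go through. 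For the converse of the equality in (i), the fact you defer, namely $G_v\in\mathbf{W}_p$ when $\alpha(G)>1$, is precisely the paper's Lemma \ref{key2}(i); its proof there is an induction on $p$ using only Lemmas \ref{CP}, \ref{Pinter_lem} and \ref{lem_equiv}, never the present lemma, so invoking it creates no circularity, and your contradiction $p(\alpha-1)\leq n(G_v)\leq p\alpha-(p+1)$ is correct.

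Two repairs are needed, both local. First, in the residual case $C=K_{p-1}$ of your inductive step, the justification that $G-w$ remains connected ("$v$ stays joined to $x$") is incomplete: you must also note that every component of $G-x$ other than $C$ keeps its edge to $x$, which holds because $w\in C$; alternatively, you can avoid the issue entirely by applying the induction hypothesis to the component of $v$ in $G-w$, which contains $\{v,x\}\cup\left(V(C)-\{w\}\right)$ and so forces $\deg_{G-w}(v)\geq p-1$ whether that component is a clique or has independence number at least $2$. Second, your base case $p=2$ is true but should be argued differently than sketched: the cleanest route is not through $G_v$ but through the $\mathbf{W}_2$ property itself. If $v$ is pendant at $x$, pick $y\in N_G(x)$ with $y\neq v$ (it exists since $G$ is connected and $\alpha\geq2$), and extend the disjoint independent sets $\{y\},\{v\}$ to disjoint maximum independent sets $S_1\ni y$ and $S_2\ni v$; then $S_1$ avoids $x$ (adjacent to $y$) and avoids $v$ (which lies in $S_2$), so $S_1\cup\{v\}$ is independent of size $\alpha+1$, a contradiction. (Your stated route also works: a maximal independent set of $G-v$ containing such a $y$ has size $\alpha-1$, while one containing $x$ has size $\alpha$, so $G-v$ is not well-covered, contradicting Lemma \ref{lem_equiv}; but this needs to be written out, since it is the crux.) With these two points filled in, the proof is complete.
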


Clearly, Lemma \ref{lem_key} holds true for well-covered graphs as well.

\begin{lemma}
\label{lem-connected}If $G$ is a $\mathbf{W}_{p}$ graph, then every connected
component of $G$ contains at least $p$ vertices, and, consequently, $n(G)\geq
p\cdot c\left(  G\right)  $, where $c\left(  G\right)  $ is the number of
connected components of $G$.
\end{lemma}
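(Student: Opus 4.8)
The plan is to exploit the defining property of the $\mathbf{W}_{p}$ class to produce $p$ pairwise disjoint maximum independent sets of $G$, and then to track how these sets meet a fixed connected component. Since $G\in\mathbf{W}_{p}$ we have $n(G)\ge p$, so one may choose $p$ distinct vertices $v_{1},\dots,v_{p}$ and form the pairwise disjoint (singleton) independent sets $A_{i}=\{v_{i}\}$. Applying the definition of $\mathbf{W}_{p}$ to $A_{1},\dots,A_{p}$ yields $p$ pairwise disjoint maximum independent sets $S_{1},\dots,S_{p}$, each of cardinality $\alpha:=\alpha(G)$.

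Next I would fix a connected component $C$ of $G$. Writing $C_{1},\dots,C_{t}$ for all the components of $G$, the independence number is additive over components, so $\alpha=\sum_{j=1}^{t}\alpha(C_{j})$, and for each $i$ the restriction $S_{i}\cap C_{j}$ is an independent set of $C_{j}$, whence $|S_{i}\cap C_{j}|\le\alpha(C_{j})$. Because $|S_{i}|=\sum_{j=1}^{t}|S_{i}\cap C_{j}|=\alpha=\sum_{j=1}^{t}\alpha(C_{j})$, every one of these inequalities must in fact be an equality; in particular $|S_{i}\cap C|=\alpha(C)$ for each $i$. Thus each $S_{i}$ meets $C$ in a maximum independent set of $C$.

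Finally, since the sets $S_{1},\dots,S_{p}$ are pairwise disjoint, so are the traces $S_{1}\cap C,\dots,S_{p}\cap C$, and therefore $|C|\ge\sum_{i=1}^{p}|S_{i}\cap C|=p\cdot\alpha(C)\ge p$, using $\alpha(C)\ge1$ as $C$ is nonempty. Summing over all components gives $n(G)=\sum_{j=1}^{t}|C_{j}|\ge p\cdot t=p\cdot c(G)$, which is the asserted consequence.

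The argument is short, and the only step requiring care is the equality argument in the second paragraph: one must verify that $\alpha$ is additive over connected components so that the per-component bounds $|S_{i}\cap C_{j}|\le\alpha(C_{j})$ are forced to saturate simultaneously. I expect no genuine obstacle here, but it is worth emphasizing that the $\mathbf{W}_{p}$ property is applied to $G$ itself rather than to the component $C$, since a component is not a priori known to belong to $\mathbf{W}_{p}$ (indeed, asserting that would presuppose the very bound $|C|\ge p$ we are proving).
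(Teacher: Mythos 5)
Your proposal is correct and follows essentially the same route as the paper: both arguments take $p$ distinct vertices, extend the singleton sets to $p$ pairwise disjoint maximum independent sets, and then count their pairwise disjoint, nonempty traces on a fixed connected component. The only difference is in how the traces are shown nonempty — the paper simply observes that a maximum independent set must meet every component, while you derive the stronger fact $\left\vert S_{i}\cap C\right\vert =\alpha(C)$ from additivity of the independence number over components, which in passing yields the sharper bound $\left\vert C\right\vert \geq p\cdot\alpha(C)$, though only $\left\vert C\right\vert \geq p$ is needed.
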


\begin{proof}
By the definition of $\mathbf{W}_{p}$ graphs, $n(G)\geq p$. Let $v_{1}%
,v_{2},\ldots,v_{p}\in V\left(  G\right) $. Hence, there exists $p$ pairwise
disjoint maximum independent sets of $G$, say $S_{1},S_{2},\ldots,S_{p}$, such
that $v_{i}\in S_{i}$ for all $1\leq i\leq p$.

Let $H$ be a connected component of $G$. Due to the maximum of the independent
set $S_{i}$, we have $\left\vert S_{i} \cap V(H) \right\vert > 0$ for all $1
\leq i \leq p$. Therefore, $S_{1} \cap V(H), S_{2} \cap V(H), \ldots, S_{p}
\cap V(H)$ are $p$ pairwise disjoint non-empty maximum independent sets in
$H$. This implies that $n(H) \geq p$, and consequently, $n(G) \geq p \cdot
c(G)$.
\end{proof}

\begin{theorem}
\label{disconnected} A graph is $\mathbf{W}_{p}$ if and only if each of its
every connected component is also $\mathbf{W}_{p}$.
\end{theorem}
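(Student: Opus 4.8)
The plan is to prove the biconditional in Theorem~\ref{disconnected} by two implications, with the forward direction being the routine one and the converse requiring the real work.

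**The forward direction.** Suppose $G$ is a $\mathbf{W}_p$ graph, and let $H$ be a connected component of $G$. I must show $H$ is $\mathbf{W}_p$. First, by Lemma~\ref{lem-connected}, $n(H)\geq p$, so the cardinality condition in the definition is satisfied. Now take any $p$ pairwise disjoint independent sets $A_1,\ldots,A_p$ in $H$. Since $H$ is an induced subgraph of $G$, these are also $p$ pairwise disjoint independent sets of $G$, so by hypothesis there exist $p$ pairwise disjoint maximum independent sets $S_1,\ldots,S_p$ of $G$ with $A_i\subseteq S_i$. The key observation is that a maximum independent set of $G$ restricts to a maximum independent set on each component: writing $G$ as the disjoint union of its components $H=H_1,H_2,\ldots,H_c$, every maximum independent set $S$ of $G$ satisfies $S=\bigsqcup_j (S\cap V(H_j))$ with each $S\cap V(H_j)$ maximum in $H_j$ (otherwise one could enlarge $S$ on some component, contradicting maximality). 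Hence $S_i\cap V(H)$ is a maximum independent set of $H$ containing $A_i$ (as $A_i\subseteq V(H)$), and these sets are pairwise disjoint since the $S_i$ are. This witnesses $H\in\mathbf{W}_p$.

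**The converse direction.** Suppose every connected component of $G$ is $\mathbf{W}_p$; I must show $G$ is $\mathbf{W}_p$. Write the components as $H_1,\ldots,H_c$. Since each $n(H_j)\geq p$, certainly $n(G)\geq p$. Given $p$ pairwise disjoint independent sets $A_1,\ldots,A_p$ of $G$, for each component $H_j$ the restrictions $A_1\cap V(H_j),\ldots,A_p\cap V(H_j)$ are $p$ pairwise disjoint independent sets of $H_j$; by hypothesis these extend to $p$ pairwise disjoint maximum independent sets $S_1^{(j)},\ldots,S_p^{(j)}$ of $H_j$. I then reassemble across components by setting $S_i=\bigcup_{j=1}^{c} S_i^{(j)}$. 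Each $S_i$ is independent (a union of independent sets living in distinct components with no edges between them), it is maximum in $G$ because it is maximum on every component and $\alpha(G)=\sum_j \alpha(H_j)$, the $S_i$ are pairwise disjoint since disjointness holds componentwise, and $A_i=\bigcup_j (A_i\cap V(H_j))\subseteq\bigcup_j S_i^{(j)}=S_i$. This establishes $G\in\mathbf{W}_p$.

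**The main obstacle** is the bookkeeping lemma underlying both directions, namely that maximum independent sets of a disjoint union are exactly the componentwise unions of maximum independent sets, together with the additivity $\alpha(G)=\sum_j\alpha(H_j)$. This is elementary but must be stated cleanly, since both the ``restriction is maximum'' claim (forward) and the ``reassembly is maximum'' claim (converse) depend on it; the only subtlety to check is that in the converse the chosen extensions on different components can be combined index-by-index while preserving both maximality and the containments $A_i\subseteq S_i$, which follows because the index sets match and the components are edge-disjoint and vertex-disjoint.
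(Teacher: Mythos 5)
Your proof is correct and follows essentially the same approach as the paper: both directions are handled by restricting maximum independent sets to components (using Lemma~\ref{lem-connected} for the cardinality condition) and by reassembling componentwise extensions into global maximum independent sets. The only difference is that you spell out the underlying bookkeeping fact (maximum independent sets of a disjoint union are exactly componentwise unions of maximum ones, with $\alpha$ additive over components), which the paper leaves implicit.
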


\begin{proof}
Let us start by proving that if every connected component of a graph $G$ is
$\mathbf{W}_{p}$, then $G$ itself is $\mathbf{W}_{p}$. Indeed, for each
connected component $H$ of $G$, we have $n(G) \geq n(H) \geq p$. Now, let
$A_{1},\ldots,A_{p}$ be $p$ pairwise disjoint independent sets in $G$. Then,
$A_{1}\cap V\left(  H\right)  ,\ldots,A_{p}\cap V\left(  H\right)  $ may be
enlarged to $p$ pairwise disjoint maximum independent sets $S_{1}^{H}%
,\ldots,S_{p}^{H}$ in $H$, because $H$ $\in$ $\mathbf{W}_{p}$. Consequently,%
\[
A_{1}\subseteq%
{\displaystyle\bigcup\limits_{H}}
S_{1}^{H}, A_{2}\subseteq%
{\displaystyle\bigcup\limits_{H}}
S_{2}^{H},\ldots,A_{p}\subseteq%
{\displaystyle\bigcup\limits_{H}}
S_{p}^{H},
\]
where
\[%
{\displaystyle\bigcup\limits_{H}}
S_{1}^{H},%
{\displaystyle\bigcup\limits_{H}}
S_{2}^{H},\ldots,%
{\displaystyle\bigcup\limits_{H}}
S_{p}^{H}%
\]
are $p$ pairwise disjoint maximum independent sets in $G$.

Conversely, let $H$ be an arbitrary connected component of $G$. Appying Lemma
\ref{lem-connected}, we have $n(H) \geq p$. Second, let $A_{1}, \ldots, A_{p}$
be $p$ pairwise disjoint independent sets in $H$. Since each $A_{j}$ is also
an independent set in $G$, there exist $p$ pairwise disjoint maximum
independent sets $S_{1}, \ldots, S_{p}$ in $G$ such that $A_{i} \subseteq
S_{i}$ for $1 \leq i \leq p$, given that $G \in\mathbf{W}_{p}$. Consequently,
$S_{1} \cap V(H), \ldots, S_{p} \cap V(H)$ are $p$ pairwise disjoint maximum
independent sets in $H$ with $A_{i} \subseteq S_{i} \cap V(H)$ for $1 \leq i
\leq p$. Therefore, $H$ belongs to $\mathbf{W}_{p}$ as well.
\end{proof}

We are now in a position to prove the main theorem of this section. First, let
us present a further essential localization property of $\mathbf{W}_{p}$ class.

\begin{lemma}
\label{key2} Let $G$ be a $\mathbf{W}_{p}$ graph. The following assertions are true:

\begin{enumerate}
\item[(i)] if $\alpha(G)>1$, then $G_{x}\in\mathbf{W}_{p}$ for every $x\in
V(G)$;

\item[(ii)] if all connected components of $G$ have an independence number
greater than $1$, then $\delta(G)\geq p$;

\item[(iii)] if $S$ is an independent set of $G$ such that $\left\vert
S\right\vert <\alpha(G)$, then $G_{S}\in\mathbf{W}_{p}$. In particular, if
$p>1$, then $G_{S}$ has no isolated vertices.
\end{enumerate}
\end{lemma}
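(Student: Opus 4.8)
The plan is to prove the three assertions largely by induction on $p$ and by reducing everything to the already-established facts about $\mathbf{W}_{1}$ and $\mathbf{W}_{2}$ graphs, exploiting Lemma~\ref{lem_equiv}, which links $\mathbf{W}_{p}$ to $\mathbf{W}_{p-1}$ via vertex deletion. For part~(i), I would argue that the localization $G_{x}=G-N_{G}[x]$ inherits membership in $\mathbf{W}_{p}$ whenever $\alpha(G)>1$. The base cases $p=1$ and $p=2$ are exactly Lemma~\ref{CP}(1) and Lemma~\ref{Pinter_lem}(1). For the inductive step with $p\geq 3$, I would use Lemma~\ref{lem_equiv}: to show $G_{x}\in\mathbf{W}_{p}$ it suffices to show that $(G_{x})-v\in\mathbf{W}_{p-1}$ and $\alpha(G_{x}-v)=\alpha(G_{x})$ for every $v\in V(G_{x})$. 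The key observation is that deletion and localization commute in the sense that $(G-v)_{x}=(G_{x})-v$ when $v\notin N_{G}[x]$, so after deleting a vertex $v$ one is localizing a $\mathbf{W}_{p-1}$ graph and can invoke the induction hypothesis; one must also track that the independence number is preserved, which follows because localizing at a single vertex drops $\alpha$ by exactly $1$ (Lemma~\ref{CP}(1), which holds for $\mathbf{W}_{p}\subseteq\mathbf{W}_{1}$) and this behaves uniformly across the deletions.

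For part~(ii), the statement $\delta(G)\geq p$ should follow by reducing to connected components and then applying Lemma~\ref{lem_key}(ii). By Theorem~\ref{disconnected}, each connected component $H$ of $G$ is itself $\mathbf{W}_{p}$, and by hypothesis $\alpha(H)>1$; then Lemma~\ref{lem_key}(ii) gives $\delta(H)\geq p$ directly for each component. Since the degree of any vertex of $G$ equals its degree within its own component, the global minimum degree satisfies $\delta(G)=\min_{H}\delta(H)\geq p$. The only subtlety is confirming that the hypothesis ``all connected components have independence number greater than $1$'' is precisely what is needed to apply the connected version; this is immediate.

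For part~(iii), I would induct on $|S|$, using part~(i) as the single-vertex base step together with the fact that localizing at an independent set can be performed one vertex at a time: if $S=\{s_{1},\ldots,s_{k}\}$ is independent with $k<\alpha(G)$, then $G_{S}=\big((G_{s_{1}})_{s_{2}}\big)_{\cdots s_{k}}$, where at each stage the remaining vertices still form an independent set of the localized graph and the relevant independence number strictly exceeds the number of vertices left to localize (because $\alpha$ drops by exactly one per step by Lemma~\ref{CP}(2), so $\alpha(G_{s_{1}\cdots s_{j}})=\alpha(G)-j>k-j\geq 0$ keeps the hypothesis $\alpha>1$ alive until the last localization). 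Applying part~(i) repeatedly then yields $G_{S}\in\mathbf{W}_{p}$. For the final sentence, suppose $p>1$ and $G_{S}$ had an isolated vertex $w$; since $G_{S}\in\mathbf{W}_{p}\subseteq\mathbf{W}_{2}$ and $\alpha(G_{S})=\alpha(G)-|S|\geq 1$, I would examine whether $\alpha(G_{S})>1$ or $=1$. If $\alpha(G_{S})>1$, part~(ii) applied to the component of $w$ forces $\delta\geq p\geq 2$, contradicting that $w$ is isolated; the boundary case $\alpha(G_{S})=1$ means $G_{S}$ is complete with no isolated vertices unless it is a single point, which is excluded since $|S|<\alpha(G)$ guarantees enough room. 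The main obstacle I anticipate is in part~(i): justifying cleanly that localization and deletion interact correctly under the $\mathbf{W}_{p}$-to-$\mathbf{W}_{p-1}$ reduction, in particular verifying the commutation $(G-v)_{x}=(G_{x})-v$ for $v\notin N_{G}[x]$ and handling the case $v\in N_{G}(x)$ separately, while ensuring the independence-number bookkeeping remains consistent throughout the induction.
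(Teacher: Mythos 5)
Your treatment of (i), (ii), and the $\mathbf{W}_{p}$-membership part of (iii) is essentially the paper's own proof: induction on $p$ via Lemma \ref{lem_equiv} with the commutation $G_{x}-v=(G-v)_{x}$, reduction to connected components via Theorem \ref{disconnected} plus Lemma \ref{lem_key}(ii), and iterated single-vertex localization for (iii). One remark: the case $v\in N_{G}(x)$ that you flag as a potential obstacle never arises, because Lemma \ref{lem_equiv} only requires checking $v\in V(G_{x})$, and every such $v$ automatically lies outside $N_{G}[x]$, so the commutation always applies.

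There is, however, a genuine gap in your proof of the last claim of (iii), that $G_{S}$ has no isolated vertices when $p>1$. An isolated vertex $w$ of $G_{S}$ is by itself a connected component of $G_{S}$, and that component has independence number exactly $1$. So in your case $\alpha(G_{S})>1$ you cannot invoke part (ii): its hypothesis is that \emph{every} component has independence number greater than $1$, and the component $\{w\}$ violates it. The very object you are trying to exclude is what makes (ii) inapplicable, so the argument is circular; note that the global condition $\alpha(G_{S})>1$ says nothing per component (e.g.\ $K_{1}\cup K_{5}$ has $\alpha=2$ yet contains an isolated vertex). Your boundary case is also justified incorrectly: $|S|<\alpha(G)$ only gives $\alpha(G_{S})\geq1$, which is perfectly compatible with $G_{S}=K_{1}$, so it does not "guarantee enough room." The correct argument — and the paper's — is per component: by Theorem \ref{disconnected} every component $H$ of $G_{S}$ lies in $\mathbf{W}_{p}$, so either $H$ is complete, in which case $n(H)\geq p\geq2$ by Lemma \ref{lem-connected} (equivalently Lemma \ref{lem_key}(i)), or $\alpha(H)>1$, in which case $\delta(H)\geq p\geq2$ by assertion (ii); in both cases $H$ is not a singleton, hence $G_{S}$ has no isolated vertex.
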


\begin{proof}
\emph{(i)} By Lemmas \ref{CP} and \ref{Pinter_lem}, the assertion holds for
$p=1,2$. Now we prove by induction on $p$. Assume that $p>2$. Let $x\in V(G)$.
According to Lemma \ref{lem_equiv}, it is enough to show that $G_{x}%
-v\in\mathbf{W}_{p-1}$ and $\alpha(G_{x}-v)=\alpha(G_{x})$, for all $v\in
V(G_{x})$.

First, we observe that $G_{x}-v=(G-v)_{x}$, as shown below:
\[
G_{x}-v=G-N_{G}\left[  x\right]  -v=G-v-N_{G-v}\left[  x\right]  =(G-v)_{x}.
\]

By Lemma \ref{lem_equiv}, $G-v\in\mathbf{W}_{p-1}$ and $\alpha(G)=\alpha
(G-v)$. By the induction hypothesis, $(G-v)_{x}\in\mathbf{W}_{p-1}$. Note that
$G$ is well-covered and thus $\alpha(G_{x})=\alpha(G)-1$ in accordance with
Lemma \ref{CP}.

Moreover, since $p>2$, then $G-v\in\mathbf{W}_{p-1}\subseteq\mathbf{W}_{1}$
which means that $G-v$ is well-covered. Thus, $\alpha((G-v)_{x})=\alpha
(G-v)-1$. Hence,
\begin{align*}
\alpha(G_{x}-v) &  =\alpha((G-v)_{x})=\alpha(G-v)-1\\
&  =\alpha(G)-1=\alpha(G_{x}).
\end{align*}

\emph{(ii)} By Theorem \ref{disconnected}, all connected components of $G$
belong to $\mathbf{W}_{p}$. Consequently, the assertion follows directly from
Lemma \ref{lem_key}\emph{(ii)}.

\emph{(iii)} We prove by induction on $\left\vert S\right\vert $. If
$S=\emptyset$, then the assertion holds trivially. Now suppose $S\neq
\emptyset$. Choose $x\in S$ and define $S^{\prime}=S-\{x\}$. Then, we have
\[
G_{S}=G_{x}-N_{G_{x}}[S^{\prime}]=(G_{x})_{S^{\prime}}.
\]
By the assertion \emph{(i)}, $G_{x}\in\mathbf{W}_{p}$. Applying the induction
hypothesis, $(G_{x})_{S^{\prime}}\in\mathbf{W}_{p}$ and, hence, $G_{S}%
\in\mathbf{W}_{p}$.

If a connected component of $G_{S}$ is a complete graph, then its order is $p
$ at least. Otherwise, it has no isolated vertices in accordance with the
assertion \emph{(ii)}.
\end{proof}

\begin{theorem}
\label{mthm} Let $G$ be a connected $\mathbf{W}_{p}$ graph with $p\neq2$. Then
$G$ is $p$-quasi-regularizable if and only if $n(G)\geq(p+1)\cdot\alpha(G)$.
\end{theorem}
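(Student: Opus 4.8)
The two implications have very different difficulty, so I would treat them separately. For the forward (``only if'') direction I would argue directly from well-coveredness. Since $\mathbf{W}_{p}\subseteq\mathbf{W}_{1}$, the graph $G$ is well-covered, so every maximum independent set $S$ is maximal and hence satisfies $N_{G}(S)=V(G)\setminus S$, giving $|N_{G}(S)|=n(G)-\alpha(G)$. Applying $p$-quasi-regularizability to such an $S$ yields $p\,\alpha(G)=p|S|\le|N_{G}(S)|=n(G)-\alpha(G)$, i.e.\ $n(G)\ge(p+1)\alpha(G)$. This step uses nothing beyond well-coveredness and works for every $p$.

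For the backward (``if'') direction, assume $n(G)\ge(p+1)\alpha(G)$; I must show $|N_{G}(S)|\ge p|S|$ for every independent set $S$. The case $\alpha(G)=1$ (so $G=K_{n}$ with $n\ge p+1$) is immediate, so I assume $\alpha(G)\ge2$, whence $\delta(G)\ge p$ by Lemma \ref{lem_key}(ii). The engine is the peeling identity: for $x\in S$ the sets $N_{G}(x)$ and $N_{G_{x}}(S\setminus\{x\})$ are disjoint (the second lies in $V(G_{x})=V(G)\setminus N_{G}[x]$) and cover $N_{G}(S)$, so
\[
|N_{G}(S)|=\deg_{G}(x)+|N_{G_{x}}(S\setminus\{x\})|,
\]
while $G_{x}\in\mathbf{W}_{p}$ with $\alpha(G_{x})=\alpha(G)-1$ by Lemma \ref{key2}. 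Iterating peels $S=\{v_{1},\dots,v_{k}\}$ one vertex at a time and writes $|N_{G}(S)|=\sum_{i=1}^{k}\deg_{G_{\{v_{1},\dots,v_{i-1}\}}}(v_{i})$ as a sum of degrees taken in successive $\mathbf{W}_{p}$-localizations. I would then run an induction on $|S|$ around the component-local statement: if $H\in\mathbf{W}_{p}$ and every connected component of $H$ meeting $S$ satisfies $n(\cdot)\ge(p+1)\alpha(\cdot)$, then $|N_{H}(S)|\ge p|S|$. The base case $|S|=1$ is exactly $\deg_{H}(x)\ge p$, valid because a component meeting the hypothesis is either a clique $K_{m}$ with $m\ge p+1$ or has independence number $\ge2$ and thus minimum degree $\ge p$.

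In the inductive step I would delete one vertex $x\in S$, apply the peeling identity, and try to invoke the induction hypothesis on $(G_{x},S\setminus\{x\})$. Here lies the \textbf{main obstacle}: localizing at $x$ can split $x$'s component and orphan cliques, and a clique attached to $G$ only through $N_{G}[x]$ becomes a free complete component $K_{m}$ of $G_{x}$; when $m=p$ this $K_{p}$ violates $n\ge(p+1)\alpha$ and is itself not $p$-quasi-regularizable, so the hypothesis cannot be applied blindly to the pieces produced. The plan is a charging argument: the surplus $\deg_{G}(x)-p$ of the deleted vertex should pay, one unit each, for the orphaned $K_{p}$-components of $G_{x}$ that actually meet $S\setminus\{x\}$. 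Writing $\epsilon(H)=n(H)-p\,\alpha(H)\ge0$ for the excess of a $\mathbf{W}_{p}$ graph, a single localization changes it by $\epsilon(G_{x})-\epsilon(G)=-(\deg_{G}(x)-p+1)$, so the entire statement is equivalent to the global inequality $\epsilon(G)-\epsilon(G_{S})\ge|S|$; the content is that the tight cases $\epsilon(G)=\alpha(G)$ of the hypothesis supply just enough surplus to absorb the deficient complete components created along the way.

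The technical heart, and the place where the restriction $p\neq2$ must enter, is proving that a vertex $x\in S$ (or, more robustly, an ordering of $S$) can always be chosen so that this charging balances, i.e.\ so that the deficient components produced by localization which meet $S\setminus\{x\}$ are never more numerous than the available surplus. I expect this to require a careful structural analysis of how cliques get orphaned in a connected $\mathbf{W}_{p}$ graph, using Lemma \ref{key2} to keep $G_{x}\in\mathbf{W}_{p}$ at every stage and Lemma \ref{lem_key}(i) to pin down the orders of the complete components. I would also expect to verify that the balance genuinely fails at $p=2$ by exhibiting a $\mathbf{W}_{2}$ graph with $n\ge3\alpha$ that is not $2$-quasi-regularizable, thereby showing the hypothesis $p\neq2$ is necessary rather than an artifact of the method.
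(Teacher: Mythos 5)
Your forward direction is exactly the paper's argument and is correct. Your backward direction, however, stops at precisely the point where the real proof begins. The peeling identity, the excess function $\epsilon(H)=n(H)-p\cdot\alpha(H)$, and the equivalence $|N_{G}(S)|\geq p|S|\Leftrightarrow\epsilon(G)-\epsilon(G_{S})\geq|S|$ are all correct, and your recognition that localization can orphan complete components $K_{p}$ (whose $S$-vertices then have degree only $p-1$) does identify the right obstacle; it corresponds to the paper's Claim 2, that $|X_{x}|\leq p-1$ forces $|X_{x}|=p-1$, where $X_{x}=N_{G}(x)-N_{G}(S-x)$. But the ``technical heart'' you defer (``I expect this to require a careful structural analysis\dots'') \emph{is} the theorem; no ordering or charging scheme is exhibited, and none is known to work. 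The paper's resolution is not a charging argument at all: after disposing (as you do) of vertices with $|X_{x}|\geq p$ via the monotone reduction $p|S-x|\leq|N_{G}(S-x)|\Rightarrow p|S|\leq|N_{G}(S)|$, it assumes $|X_{x}|=p-1$ for every $x\in S$, partitions $N_{G}(S)$ into the pairwise disjoint private sets $X_{x}$ and the shared set $U=N_{G}(S)-\bigcup_{x\in S}X_{x}$, so that $|N_{G}(S)|=(p-1)|S|+|U|$, and then shows $|U|\geq|S|$ by contradiction. That contradiction argument (Claim 1, Facts 1--5, and the case analysis in the proof of Theorem \ref{mthm}) establishes that each relevant pair $x,y\in S$ contributes at least two shared neighbors ($|U_{xy}|\geq2$) or each triple at least three ($|U_{xyz}|\geq3$), these contributions being disjoint, via repeated localizations kept inside $\mathbf{W}_{p}$ by Lemma \ref{key2}; and it is exactly there that $p\geq3$ is used (one branch terminates with a component $H_{b}=K_{2}$ that would have to belong to $\mathbf{W}_{p}$, impossible for $p\geq3$). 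Nothing in your proposal substitutes for this counting argument, so the proof has a genuine gap.

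A further correction: your plan to justify the hypothesis $p\neq2$ by ``exhibiting a $\mathbf{W}_{2}$ graph with $n\geq3\alpha$ that is not $2$-quasi-regularizable'' runs against the paper itself, which conjectures in Section \ref{sec4} that the equivalence does hold for $p=2$. The exclusion of $p=2$ is a limitation of the proof technique, not a known failure of the statement; the graph $C_{5}$ mentioned in the introduction only refutes unconditional $2$-quasi-regularizability of $\mathbf{W}_{2}$ graphs, and it violates $n\geq3\alpha$, so it is irrelevant to the present equivalence.
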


\begin{proof}
Suppose $G$ is $p$-quasi-regularizable. Hence, if $S$ is a maximum independent
set of $G$, then we infer that
\[
n(G)=\left\vert N_{G}(S)\right\vert +\left\vert S\right\vert \geq
p\cdot\left\vert S\right\vert +\left\vert S\right\vert =\left(  p+1\right)
\cdot\left\vert S\right\vert =\left(  p+1\right)  \cdot\alpha(G).
\]

Conversely, suppose that $n(G)\geq(p+1)\cdot\alpha(G)$. Actually, the case
$p=1$ is Theorem \ref{Berge}. Assume that $p\geq3$. Clearly, if $\alpha(G)=1$,
then $G$ is a complete graph of order $n(G)\geq p+1$, and so $\left\vert
N_{G}(x)\right\vert \geq p$\ for all $x\in V(G)$.

Suppose that $\alpha(G)>1$. Let $S$ be a non-empty independent set of $G$. If
$\left\vert S\right\vert =1$, by Lemma \ref{lem_key}\emph{(ii)}, $\left\vert
N_{G}(S)\right\vert \geq p\cdot|S|$, as required.

If $\left\vert S\right\vert =\alpha(G)$, then $V(G)=S\cup N_{G}(S)$ and so
$\left\vert N_{G}(S)\right\vert =n(G)-\left\vert S\right\vert $. Hence,
$p\cdot\left\vert S\right\vert \leq|N_{G}(S)|$, whenever $n(G)\geq
(p+1)\cdot\alpha(G)$, as required.

Now we concentrate on the situation when $1<\left\vert S\right\vert
<\alpha(G)$. Then $\alpha(G)\geq3$. Let $x\in S$ be chosen arbitrarily. By
Lemma \ref{key2}\emph{(i)}, it follows that $G_{x}\in\mathbf{W}_{p}$ and
$\alpha(G_{x})=\alpha(G)-1$. Hence, $S-x$ is a non-empty independent set of
$G_{x}$. Therefore, by Lemma \ref{key2}\emph{(iii)}, $G_{S-x}\in\mathbf{W}%
_{p}$. Moreover, by Lemma \ref{CP}\emph{(ii)}, $\alpha(G_{S-x})=\alpha
(G)-\left\vert S-x\right\vert =\alpha(G)-\left\vert S\right\vert +1>1$. Let
\[
X_{x}=N_{G}(x)-N_{G}(S-x).
\]


\vskip0.5em \noindent\textit{Claim 1.} $\left\vert X_{x}\right\vert >0$. \vskip0.5em

Assume, to the contrary, that $X_{x}=\emptyset$. This implies that
$N_{G}(x)\subseteq N_{G}(S-x)$, and, therefore, $N_{G}(S)=N_{G}(S-x)$.
Consequently,
\[
V(G_{S-x})=\left(  V(G)-N_{G}[S]\right)  \cup\left\{  x\right\}  ,
\]
which indicates that $x$ is an isolated vertex of $G_{S-x}$. This is a
contradiction due to Lemma \ref{key2}\emph{(ii)}. Therefore, the set of
private neighbours of $x$ is not empty, i.e., $\left\vert X_{x}\right\vert >0
$.

\vskip0.5em \noindent\textit{Claim 2.} If $\left\vert X_{x}\right\vert \leq
p-1 $, then $\left\vert X_{x}\right\vert =p-1$. \vskip0.5em

In this case, $x$ is a vertex of degree at most $p-1$ in $G_{S-x}$. However,
$G_{S-x}\in\mathbf{W}_{p}$, and so, by Lemma \ref{key2}\emph{(ii)}, the
induced subgraph on $\{x\}\cup X_{x}$ in $G_{S-x}$ is a complete graph $K_{p}
$. Hence $\deg_{G_{S-x}}(x)=\left\vert X_{x}\right\vert =p-1$, as required.

\vskip0.5em Now, we consider the case where there exists a vertex $x\in S$
such that $\left\vert X_{x}\right\vert \geq p$. We assert that the inequality
$p\cdot\left\vert S-x\right\vert \leq\left\vert N_{G}(S-x)\right\vert $
implies the inequality $p\cdot\left\vert S\right\vert \leq\left\vert
N_{G}(S)\right\vert $, because
\begin{align*}
p\cdot\left\vert S-x\right\vert \leq\left\vert N_{G}(S-x)\right\vert  &
\Leftrightarrow p\cdot(\left\vert S\right\vert -1)\leq\left\vert
N_{G}(S)\right\vert -\left\vert X_{x}\right\vert \\
& \Leftrightarrow p\cdot\left\vert S\right\vert +(\left\vert X_{x}\right\vert
-p)\leq\left\vert N_{G}(S)\right\vert .
\end{align*}

In other words, for every independent set $S$ of $G$ with $\left\vert
S\right\vert <\alpha(G)$, and any vertex $x\in S$ such that $\left\vert
X_{x}\right\vert \geq p$, the inequality $p\cdot\left\vert S-x\right\vert
\leq\left\vert N_{G}(S-x)\right\vert $ implies $p\cdot\left\vert S\right\vert
\leq\left\vert N_{G}(S)\right\vert $ as well.

Now, apply this procedure to all vertices $x$ in the set $S$ for which
$\left\vert X_{x}\right\vert \geq p$. If every vertex in $S$ satisfies
$\left\vert X_{x}\right\vert \geq p$, the problem reduces to proving that
$p=p\cdot\left\vert A\right\vert \leq|N_{G}(A)|$ for all single-vertex subsets
$A$ of $V(G)$. This holds by Lemma \ref{lem_key}\emph{(ii)}, since $G $ is
connected. Thus, by \textit{Claim 2}, we may assume that $\left\vert
X_{x}\right\vert =p-1$ for all $x\in S$.

\vskip0.5em \noindent\textit{Fact 1.} $ab\notin E(G)$ for all $a\in%
{\textstyle\bigcup\limits_{x\in S}}
X_{x}$ and $b\in G_{S}$. \vskip0.5em

Indeed, the graph $G_{S-x}$ has a connected component $H$ that contains
$\{x\}\cup X_{x}$. By Lemma \ref{key2}\emph{(iii)}, $G_{S-x}$ belongs to
$\mathbf{W}_{p}$, and then Theorem \ref{disconnected} guarantees that $H$ is
also in $\mathbf{W}_{p}$. If there is an edge $ab$ connecting $X_{x}$ to
$G_{S}$, then $\{x,b\}$ forms an independent set of $H$. Consequently,
$\alpha(H)>1$, and since $\deg_{H}(x)=p-1$, which contradicts Lemma
\ref{key2}\emph{(ii)}.

\vskip0.5em \noindent\textit{Fact 2.} $X_{x}\cap X_{y}=\emptyset$ for all
distinct elements $x,y\in S$. \vskip0.5em

Indeed, this fact follows directly from the definition. \vskip0.5em \noindent
Let $U=N_{G}(S)-%
{\textstyle\bigcup\limits_{x\in S}}
X_{x}$. First, we claim that $U\neq\emptyset$. Indeed, $V(G_{S})\neq\emptyset
$, since $\alpha(G_{S})=\alpha(G)-\left\vert S\right\vert >0$. By \emph{Fact
1}, we must have $U\neq\emptyset$, because $G$ is connected. If $\left\vert
U\right\vert \geq\left\vert S\right\vert $, by \emph{Fact 2},
\[
\left\vert N_{G}(S)\right\vert =\sum_{x\in S}\left\vert X_{x}\right\vert
+\left\vert U\right\vert =(p-1)\cdot\left\vert S\right\vert +\left\vert
U\right\vert \geq p\cdot\left\vert S\right\vert .
\]
Now, we consider the case where $\left\vert U\right\vert \leq\left\vert
S\right\vert -1$ (and $\left\vert S\right\vert \geq2$). We will show that this
scenario cannot occur. By definition of the set $X_{x}$, every vertex in $U$
has two neighbors in $S$ at least. Then there exist two distinct vertices
$x,y$ in $S$ that are adjacent to a vertex $u_{xy}$ in $U$, since
$U\neq\emptyset$. Let $U_{xy}=U-N_{G}(S-x-y)$.

\vskip0.5em \noindent\emph{Fact 3.} If $U_{xy}\neq\emptyset$, then $\left\vert
U_{xy}\right\vert \geq2$. \vskip0.5em

\noindent Let $u_{xy}$ be a vertex in $U_{xy}$. Then $u_{xy}$ is not adjacent
to any vertex in $S-x-y$ but is adjacent to at least one of $x$ or $y$. By the
definition of $X_{x}$, it follows that $u_{xy}$ must be adjacent to both $x$
and $y$.

\vskip0.5em \noindent\emph{Case 1. } $N_{G}(u_{xy})\cap V(G_{S})\neq\emptyset
$. \vskip0.5em

In this case, let $A$ be a maximum independent set in $G_{S}$ that contains at
least one vertex from $N_{G}(u_{xy})\cap V(G_{S})$. Such $A$ exists, because
$G_{S}$ belongs to $\mathbf{W}_{p}$, and, consequently, it is well-covered.
Then, $A\cup(S-x-y)$ is an independent set in $G$, which implies that
$G_{A\cup(S-x-y)}\in\mathbf{W}_{p}$.

If $\left\vert U_{xy}\right\vert =1$, say $U_{xy}=\{u_{xy}\}$, then the vertex
set of $G_{A\cup(S-x-y)}$ is $\{x,y\}\cup X_{x}\cup X_{y}$. Noting that
$\deg_{G_{A\cup(S-x-y)}}(x)=p-1$, it follows that $G_{A\cup(S-x-y)}$ is
disconnected, consisting of two connected components $G[\{x\}\cup X_{x}]$ and
$G[\{y\}\cup X_{y}]$. Thus, both $G[\{x\}\cup X_{x}]$ and $G[\{y\}\cup X_{y}]$
are complete graphs $K_{p}$ and no edges exist between $X_{x}$ and $X_{y}$.
Clearly, $u_{xy}$ is adjacent to all vertices in $X_{x}\cup X_{y}$. Indeed,
suppose, to the contrary, that $X_{x}-N_{G}[u_{xy}]\neq\emptyset$
(respectively, $X_{y}-N_{G}[u_{xy}]\neq\emptyset$). Moreover, every vertex in
$X_{x}-N_{G}[u_{xy}]$ is neither adjacent to any vertex in $X_{y}-N_{G}%
[u_{xy}]$ nor in $V(G_{S})$, in accordance with \textit{Fact 1}. Since
$G_{u_{xy}}\in\mathbf{W}_{p}$, each of its connected components must be of
order $p$, at least, which contradicts the inequality $n\left(  G[X_{x}%
-N_{G}[u_{xy}]]\right)  <p-1$. Therefore, we must have $X_{x}-N_{G}%
[u_{xy}]=X_{y}-N_{G}[u_{xy}]=\emptyset$, meaning that $u_{xy}$ is adjacent to
all vertices in $X_{x}\cup X_{y}$. Now, it follows that $G_{(S-x-y)\cup
\{u_{xy}\}}=G_{S}-N_{G}[u_{xy}] $. Consequently, we obtain the following
inequality
\[
\alpha(G_{(S-x-y)\cup\{u_{xy}\}})\leq\alpha(G_{S})\Leftrightarrow
\alpha(G)-(\left\vert S\right\vert -1)\leq\alpha(G)-\left\vert S\right\vert ,
\]
which is a contradiction. Therefore, we must have $\left\vert U_{xy}%
\right\vert \geq2$.

\vskip0.5em \noindent\emph{Case 2. } $N_{G}(u_{xy}) \cap V(G_{S}) = \emptyset
$. \vskip0.5em

If $\left\vert U_{xy}\right\vert =1$, i.e., $U_{xy}=\{u_{xy}\}$, then, by
\textit{Fact 1}, $H=G\left[  \{u_{xy},x,y\}\cup X_{x}\cup X_{y}\right]  $ is a
connected component of $G_{S-x-y}$. Thus, $\{x,y\}$ is a maximal independent
set of $H$. Since $H\in\mathbf{W}_{p}\subset\mathbf{W}_{1}$, it follows that
$\{x,y\}$ is a maximum independent set of $H$, and, consequently,
$\alpha(H)=2$. Hence, there exists a vertex $a\in X_{x}\cup X_{y}$ such that
$\{u_{xy},a\}$ is a maximum independent set of $H$. Without loss of
generality, assume that $a\in X_{x}$. Then the subgraph $H_{u_{xy}}%
=H-N_{H}[u_{xy}]$ is a complete graph containing $a$, and similarly,
$H_{a}=H-N_{H}[a]$ is a complete graph containing $u_{xy}$.

First,%
\[
V\left(  H_{a}\right)  =\{y,u_{xy}\}\cup\left(  X_{y}-N_{G}(a)\right)
\cup\left(  X_{x}-N_{G}(a)-\left\{  a\right\}  \right)  .
\]
Hence, $X_{x}-N_{G}(a)-\left\{  a\right\}  =\emptyset$, because $H_{a}$ is
complete, and $y$ is not adjacent to any private neighbor of $x$. Thus
$X_{x}=N_{G}\left[  a\right]  $. Therefore,%
\[
n(H_{a})=\left\vert \{y,u_{xy}\}\right\vert +\left\vert X_{y}-N_{G}%
(a)\right\vert =2+\left\vert X_{y}-N_{G}(a)\right\vert .
\]
In addition, $X_{y}\cap N_{G}(u_{xy})\supseteq X_{y}-N_{G}(a)$, because
$\{a,u_{xy}\}$ is a dominating set in $H$. Consequently, we have%
\[
2+\left\vert X_{y}\cap N_{G}(u_{xy})\right\vert \geq2+\left\vert X_{y}%
-N_{G}(a)\right\vert =n(H_{a})\geq p,
\]
since $H_{a}\in\mathbf{W}_{p}$. Finally, $\left\vert X_{y}\cap N_{G}%
(u_{xy})\right\vert \geq p-2$.

Since $\left\vert X_{y}\right\vert =p-1$, there are two options left only:
either $\left\vert X_{y}\cap N_{G}(u_{xy})\right\vert =p-1$ or $\left\vert
X_{y}\cap N_{G}(u_{xy})\right\vert =p-2$.

If $\left\vert X_{y}\cap N_{G}(u_{xy})\right\vert =p-1$, then $X_{y}%
-N_{G}(u_{xy})=\emptyset$, because $\left\vert X_{y}\right\vert =p-1$. Hence,
\[
n(H_{u_{xy}})=\left\vert X_{x}-N_{G}(u_{xy})\right\vert ,
\]
since%
\[
V\left(  H_{u_{xy}}\right)  =\left(  X_{x}-N_{G}(u_{xy})\right)  \cup\left(
X_{y}-N_{G}(u_{xy})\right)  \text{.}%
\]

Finally, $n(H_{u_{xy}})=\left\vert X_{x}-N_{G}(u_{xy})\right\vert
\leq\left\vert X_{x}\right\vert =p-1$, which contradicts the assumption that
$H_{u_{xy}}\in\mathbf{W}_{p}$.

Therefore, we must have $\left\vert X_{y}\cap N_{G}(u_{xy})\right\vert =p-2$,
implying that $u_{xy}$ is adjacent to all vertices in $X_{y}-b$ for some $b\in
X_{y}$, i.e., $X_{y}-N_{G}(u_{xy})=\left\{  b\right\}  $. Hence, $N_{G}%
(u_{xy})\cap X_{x}=\emptyset$, because $\left\vert X_{x}\right\vert =p-1$ and
\[
\left\vert X_{x}-N_{G}(u_{xy})\right\vert +\left\vert \left\{  b\right\}
\right\vert =n\left(  H_{u_{xy}}\right)  \geq p,
\]
in order to give $H$ a chance to be a $\mathbf{W}_{p}$ graph. Thus, $V\left(
H_{u_{xy}}\right)  =X_{x}\cup\left\{  b\right\}  $. Therefore, $X_{x}\subseteq
N_{G}\left(  b\right)  $, since $H_{u_{xy}}$ is a complete graph.
Consequently, $H_{b}$ is a complete graph with%
\[
V\left(  H_{b}\right)  =\{u_{xy},x\}\cup\left(  X_{y}-N_{G}\left[  b\right]
\right)  ,
\]
which is possible only if $X_{y}-N_{G}\left[  b\right]  =\emptyset$, because
and $x$ is not adjacent to any private neighbor of $y$. Finally,
$H_{b}=\{u_{xy},x\}=K_{2}$, which belongs to $\mathbf{W}_{p}$, implying that
either $p=2$ or $p=1$. According to the assumption that $p\geq3$, this cannot
happen. \vskip0.5em By \emph{Fact 3}, what is left is to consider the case
$\left\vert U_{xy}\right\vert =0$, which implies that $U\subseteq
N_{G}(S-x-y)$.

Recall that $U\neq\emptyset$, because $G$ is connected. Further, since
$U\subseteq N_{G}(S-x-y)$, there must be a vertex $z\in S-x-y$ that is
adjacent to some $u\in U$. Now, let
\[
U_{xyz}=(N_{G}(x)\cap N_{G}(y)\cap N_{G}(z))-N_{G}(S-x-y-z).
\]

\vskip0.5em \noindent\emph{Fact 4.} Either $\left\vert U_{xz}\right\vert
\geq2$, or $\left\vert U_{yz}\right\vert \geq2$, or $\left\vert U_{xyz}%
\right\vert \geq3$. \vskip0.5em

We have
\[
V(G_{S-x-y})=\{x,y\}\cup X_{x}\cup X_{y}\cup V(G_{S}).
\]
By Lemma \ref{key2}\emph{(iii)}, $G_{S-x-y}\in\mathbf{W}_{p}$. Hence, the
graph $G_{S-x-y}$ consists of the disjoint union of complete subgraphs on the
vertex sets $\{x\}\cup X_{x}$, $\{y\}\cup X_{y}$, and the subgraph $G_{S} $.
This implies that both $X_{x}$ and $X_{y}$ are cliques, with no edges
connecting them.

If $u \in(N_{G}(z) \cap U) - N_{G}(S - x - y - z) $ and $u \notin U_{xz} \cup
U_{yz} $, by the definitions of $U_{xz} $ and $U_{yz} $, $u $ must belong to
both $N_{G}(S - x - z) $ and $N_{G}(S - y - z) $. Moreover $u $ is adjacent to
both $x $ and $y $, which implies that $u \in N_{G}(x) \cap N_{G}(y) \cap
N_{G}(z) $. Hence, we conclude that $u\in U_{xyz}$. Furthermore, by the
definitions of $X_{x}, X_{y}, $ and $X_{z} $, it follows that
\[
U_{xz} \cup U_{xyz} \cup U_{yz} = (N_{G}(z) \cap U) - N_{G}(S - x - y - z),
\]
and both $x$ and $z$ are adjacent to all vertices in $U_{xz}$, and both $y$
and $z$ are adjacent to all vertices in $U_{yz}$. Hence, we infer that
\[
V(G_{S-x-y-z})=\{x,y,z\}\cup X_{x}\cup X_{y}\cup X_{z}\cup U_{xz}\cup
U_{xyz}\cup U_{yz}\cup V(G_{S}).
\]
If $U_{yz}\neq\emptyset$ (resp. $U_{xz}\neq\emptyset$), by \emph{Fact 3}, we
know that $\left\vert U_{yz}\right\vert \geq2$ (resp. $\left\vert
U_{xz}\right\vert \geq2$), as expected. Conversely, suppose $U_{xz}%
=U_{yz}=\emptyset$. Thus, $U_{xyz}\neq\emptyset$. Let $H=G_{S-x-y-z} $. Then,
$H\in\mathbf{W}_{p}$ and its vertex set is
\[
V(H)=\{x,y,z\}\cup X_{x}\cup X_{y}\cup X_{z}\cup U_{xyz}\cup V(G_{S}).
\]
Since each of $H_{x}$, $H_{y}$, and $H_{z}$ consists of the disjoint union of
two complete graphs with vertex sets $\{x\}\cup X_{x}$, $\{y\}\cup X_{y}$, and
$\{z\}\cup X_{z}$, respectively, along with the graph $G_{S}$, it follows that
$X_{x},X_{y},X_{z}$ are cliques, and no edges exist between these sets.

If $\left\vert U_{xyz}\right\vert =1$, let $U_{xyz}=\{u\}$. By Lemma
\ref{key2}\emph{(ii)}, $u$ is adjacent to every vertex in $X_{x}\cup X_{y}\cup
X_{z}$. If $u$ is not adjacent to any vertex in $G_{S}$, then the connected
component of $H$ containing $\{x,y,z,u\}\cup X_{x}\cup X_{y}\cup X_{z}$ does
not belong to $\mathbf{W}_{p}$. Conversely, assume $u$ is adjacent to a vertex
in $V(G_{S})$. Then $H_{u}=G_{S}-N_{G}(u)$. This implies that
\begin{align*}
\alpha(H_{u})\leq\alpha(G_{S}) & \Leftrightarrow \alpha(H)-1\leq
\alpha(G)-|S|\\
& \Leftrightarrow \alpha(G)-\left\vert S-x-y-z\right\vert -1\leq
\alpha(G)-\left\vert S\right\vert \\
& \Leftrightarrow \alpha(G)-(\left\vert S\right\vert -3)-1\leq\alpha(G)-|S|,
\end{align*}
a contradiction.

If $\left\vert U_{xyz}\right\vert =2$, let $U_{xyz}=\{u,v\}$. Recall that
$H=G_{S-x-y-z}$ and
\[
V(H)=\{u,v,x,y,z\}\cup X_{x}\cup X_{y}\cup X_{z}\cup V(G_{S}).
\]
Examining the structure, we observe that $X_{x},X_{y},$ and $X_{z}$ are
cliques and there are no edges between these sets. If neither $u$ nor $v$ is
adjacent to any vertex in $G_{S}$, then $H$ consists of the disjoint union of
the connected component $K$ with vertex set $\{u,v,x,y,z\}\cup X_{x}\cup
X_{y}\cup X_{z}$ and the graph $G_{S}$. Note that $\alpha(K)=3$ and the sizes
of $(X_{x}-N_{K}(u))$, $(X_{y}-N_{K}(u))$, and $(X_{z}-N_{K}(u))$ are at most
$p-1$. Consequently, $K_{u}$ does not belong to $\mathbf{W}_{p}$. This implies
that at least one of $u$ or $v$ must be adjacent to a vertex in $G_{S}$.
Suppose that $u$ is adjacent to a vertex in $G_{S}$. Let $A$ be a maximum
independent set of $G_{S}$ that contains exactly one vertex from $N_{G}(u)\cap
V(G_{S})$. We consider the following two cases. \vskip0.5em \noindent
\emph{Case 1.} $v$ is not adjacent to a vertex in $A$. \vskip0.5em In this
case, $H_{A}$ is a graph in $\mathbf{W}_{p}$ induced by the vertex set
$\{v,x,y,z\}\cup X_{x}\cup X_{y}\cup X_{z}$, where $\{x,y,z\}$ forms a
domination set of $H_{A}$. Consequently, $H_{A\cup\{v\}}$ belongs to
$\mathbf{W}_{p}$ and is induced by the set $(X_{x}-N_{H}(v))\cup(X_{y}%
-N_{H}(v))\cup(X_{z}-N_{H}(v))$, with $\alpha(H_{A\cup\{v\}})=2$, which is a contradiction.

\vskip0.5em \noindent\emph{Case 2.} $v$ is adjacent to any vertex in $A$.
\vskip0.5em \noindent\emph{Case 2.1.} $uv \notin E(G) $. In this case, we
have
\[
V(H_{u}) = \{v\} \cup(X_{x} - N_{G}(u)) \cup(X_{y} - N_{G}(u)) \cup(X_{z} -
N_{G}(u)) \cup V(G_{S} - N_{G}(u)).
\]
Because $X_{x} - N_{G}(u)$, $X_{y} - N_{G}(u)$ and $X_{z} - N_{G}(u)$ are
cliques of size at most $p-1$ with no edges between them, and because $H_{u}$
is a $\mathbf{W}_{p}$ graph, we may assume without loss of generality that
$X_{x} - N_{G}(u) = X_{y} - N_{G}(u) = \emptyset$, leaving only $X_{z} -
N_{G}(u) = X_{z} $. This means that $v $ is connected to every vertex in
$X_{z} $ within $H_{u} $. Consequently, $H_{\{u,v\}} $ is an induced subgraph
of $G_{S} $ on the vertex set $G_{S}-N_{G}(u) - N_{G}(v)$, which leads to the
bound $\alpha(H_{\{u,v\}}) \leq\alpha(G). $ Thus, we obtain the inequality
$\alpha(G) - |S - x - y - z| - 2 \leq\alpha(G) - |S|, $ which leads to a contradiction.

\vskip0.5em \noindent\emph{Case 2.2.} $uv \in E(G) $. In this case, we have
\[
V(H_{u}) = (X_{x}-N_{G}(u)) \cup(X_{y}-N_{G}(u)) \cup(X_{z}-N_{G}(u)) \cup
V(G_{S}-N_{G}(u)).
\]
Therefore, we have $X_{x} = X_{y} = X_{z} = N_{G}(u) $, meaning that $u $ is
adjacent to every vertex in $X_{x} \cup X_{y} \cup X_{z} $. Consequently,
$H_{u}=G_{S} - N_{G}(u) $ is an induced subgraph of $G_{S} $. This gives the
inequality $\alpha(H_{u}) \leq\alpha(G_{S}),$ which simplifies to $\alpha(G) -
|S - x - y - z| - 1 \leq\alpha(G) - |S|, $ leading to a contradiction.

\vskip0.5em \noindent\emph{Fact 5.} For any distinct elements $x,y,x^{\prime
},y^{\prime},z^{\prime}$ in $S$, we have $U_{xy}\cap U_{x^{\prime}y^{\prime}%
}=\emptyset$ and $U_{xy}\cap U_{x^{\prime}y^{\prime}z^{\prime}}=\emptyset$.
\vskip0.5em This fact follows directly from the definition of $U_{xy}$ and
$U_{xyz}.$ \vskip0.5em \noindent By \emph{Fact 3} and \emph{Fact 4}, for any
two vertices $x,y$ in $S$, there exist at least two vertices in $U_{xy}$, or
for three vertices $x,y,z$ in $S$, there exist at least three vertices in
$U_{xyz}$. Now, consider another pair of vertices $x^{\prime},y^{\prime}$ in
$S-x-y$ or another trio $x^{\prime},y^{\prime},z^{\prime}$ in $S-x-y-z$. These
choices generate at least two vertices in $U_{x^{\prime}y^{\prime}}$ or at
least three vertices in $U_{x^{\prime}y^{\prime}z^{\prime}}$, respectively. By
\emph{Fact 5}, repeating this process iteratively, we obtain $\left\vert
S\right\vert \geq\left\vert U\right\vert $, which is a contradiction.
Therefore, this situation cannot arise.
\end{proof}

The \textquotedblleft connected\textquotedblright\ assumption in Theorem
\ref{mthm} is essential. For instance, the graph $G=K_{p}\cup K_{p+2}$ is a
$\mathbf{W}_{p}$ graph with $n(G)=2p+2$ and $\alpha(G)=2$, yet $G$ is not
$p$-quasi-regularizable. Moreover, in the context of Theorem \ref{mthm}, it is
worth noting that there exist $p$-quasi-regularizable graphs which do not
belong to $\mathbf{W}_{p}$. For instance, consider the graph $G$ depicted in
Figure \ref{fig123}. Clearly, $G$ is $2$-quasi-regularizable. However,
$G\notin\mathbf{W}_{2}$ because the disjoint independent sets $\{u\}$ and
$\{v\}$ cannot be extended to two disjoint maximum independent sets in $G$.

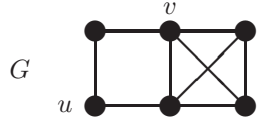
\begin{figure}[h]
\setlength{\unitlength}{1cm}\begin{picture}(5,1.2)\thicklines
\multiput(5,0)(1,0){3}{\circle*{0.29}}
\multiput(5,1)(1,0){3}{\circle*{0.29}}
\put(5,0){\line(1,0){2}}
\put(5,0){\line(0,1){1}}
\put(5,1){\line(1,0){2}}
\put(6,0){\line(0,1){1}}
\put(6,1){\line(1,-1){1}}
\put(6,0){\line(1,1){1}}
\put(7,0){\line(0,1){1}}
\put(4.6,0){\makebox(0,0){$u$}}
\put(6,1.3){\makebox(0,0){$v$}}
\put(4,0.5){\makebox(0,0){$G$}}
\end{picture}
\caption{$G$ is a $2$-quasi-regularizable graph.}%
\label{fig123}%
\end{figure}

\begin{corollary}
Let $H_{1},\ldots, H_{s}$ be connected components of a $\mathbf{W}_{p}$ graph
$G$ with $p\ne2$. Then $G$ is $p$-quasi-regularizable if and only if
$n(H_{i})\geq(p+1)\cdot\alpha(H_{i})$ for all $1\le i\le s$.
\end{corollary}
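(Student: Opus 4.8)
The plan is to reduce the statement to the connected case already settled in Theorem \ref{mthm}, by showing that $p$-quasi-regularizability is a purely component-wise property. First I would invoke Theorem \ref{disconnected}: since $G\in\mathbf{W}_{p}$, every connected component $H_{i}$ is itself a $\mathbf{W}_{p}$ graph. In particular each $H_{i}$ is a connected $\mathbf{W}_{p}$ graph with $p\neq2$, so Theorem \ref{mthm} applies to it verbatim.

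The key intermediate claim is that $G$ is $p$-quasi-regularizable if and only if each $H_{i}$ is $p$-quasi-regularizable. This rests on the elementary fact that no edge of $G$ joins two distinct components, so for any subset $T\subseteq V(H_{i})$ we have $N_{G}(T)=N_{H_{i}}(T)$, and for any independent set $S$ of $G$ the sets $S\cap V(H_{i})$ are independent in $H_{i}$ with $N_{G}(S)=\bigcup_{i}N_{H_{i}}(S\cap V(H_{i}))$ a disjoint union. For the forward direction I would take an independent set $T$ in some $H_{i}$, regard it as an independent set of $G$, and read off $p\cdot|T|\leq|N_{G}(T)|=|N_{H_{i}}(T)|$, so that $H_{i}$ is $p$-quasi-regularizable. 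For the converse I would take an arbitrary independent set $S$ of $G$, split it as $S=\bigsqcup_{i}(S\cap V(H_{i}))$, apply $p$-quasi-regularizability of each $H_{i}$ to the piece $S\cap V(H_{i})$, and sum the resulting inequalities to obtain $p\cdot|S|=\sum_{i}p\cdot|S\cap V(H_{i})|\leq\sum_{i}|N_{H_{i}}(S\cap V(H_{i}))|=|N_{G}(S)|$.

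Finally I would chain the two ingredients: by Theorem \ref{mthm} each connected $\mathbf{W}_{p}$ component $H_{i}$ is $p$-quasi-regularizable precisely when $n(H_{i})\geq(p+1)\cdot\alpha(H_{i})$, while by the component-wise claim $G$ is $p$-quasi-regularizable exactly when all the $H_{i}$ are. Composing these equivalences gives the stated criterion. I do not anticipate a genuine obstacle, since the corollary is essentially a repackaging of Theorems \ref{disconnected} and \ref{mthm}; the only point demanding care is the additivity of neighborhoods across components, which must be stated cleanly to justify both directions of the intermediate claim.
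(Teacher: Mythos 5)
Your proposal is correct and follows essentially the same route as the paper's proof: split any independent set of $G$ into its traces on the components, use additivity of neighborhoods across components, and apply Theorem \ref{mthm} to each connected component in both directions. If anything, you are slightly more careful than the paper, which uses Theorem \ref{disconnected} only implicitly when applying Theorem \ref{mthm} to the components $H_{i}$.
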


\begin{proof}
Let $S$ be an independent set of $G$. For each $1\leq i\leq s$, define
$S_{i}=S\cap V(H_{i})$. Since $S$ is independent in $G$, each $S_{i}$ is an
independent set in $H_{i}$. By Theorem \ref{mthm}, we have $\left\vert
N_{H_{i}}(S_{i})\right\vert \geq p\cdot|S_{i}|$. Summing over all $i$, it
follows that
\[
\left\vert N_{G}(S)\right\vert =\sum_{i=1}^{s}\left\vert N_{H_{i}}%
(S_{i})\right\vert \geq\sum_{i=1}^{s}p\cdot\left\vert S_{i}\right\vert
=p\cdot\sum_{i=1}^{s}\left\vert S_{i}\right\vert =p\cdot|S|.
\]
Conversely, for each $1\leq i\leq s$, let $S_{i}$ be an independent set of
$H_{i}$. Since $N_{H_{i}}(S_{i})=N_{G}(S_{i})$, the assumption implies that
$\left\vert N_{H_{i}}(S_{i})\right\vert \geq p\cdot|S_{i}|$. Applying Theorem
\ref{mthm} again, we conclude that $n(H_{i})\geq(p+1)\cdot\alpha(H_{i}).$
\end{proof}

\section{Log-concavity of independence polynomials of $\mathbf{W}_{p}$ graphs}

\label{sec3} In this section, the log-concavity problem is deeply concerned.
Let us recall some significant inequalities involving the coefficients of the
independent polynomial of a graph, for a later use.

\begin{lemma}
\cite[Lemma 1]{LM18} \label{lem2a} If $I(G;x)=\sum\limits_{k=0}^{\alpha\left(
G\right)  }s_{k}x^{k}$, then $\alpha\left(  G\right)  \cdot s_{\alpha\left(
G\right)  }\leq n\cdot s_{\alpha\left(  G\right)  -1}$.
\end{lemma}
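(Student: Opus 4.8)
The plan is to prove the inequality by a double-counting (incidence) argument on pairs consisting of a maximum independent set together with one of its vertices. Write $\alpha=\alpha(G)$ and $n=n(G)$, and recall that $s_{\alpha}$ counts the maximum independent sets of $G$, while $s_{\alpha-1}$ counts the independent sets of cardinality $\alpha-1$. Consider the set
\[
\mathcal{P}=\{(S,v):S\text{ is a maximum independent set of }G,\ v\in S\}.
\]
Since every maximum independent set has exactly $\alpha$ vertices, counting $\mathcal{P}$ by first choosing $S$ and then $v\in S$ gives $\left\vert\mathcal{P}\right\vert=\alpha\cdot s_{\alpha}$.

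Next I would count $\mathcal{P}$ the other way, via the map $(S,v)\mapsto S-v$. For every $(S,v)\in\mathcal{P}$ the set $S-v$ is an independent set of cardinality $\alpha-1$, so this map sends $\mathcal{P}$ into the family of independent sets enumerated by $s_{\alpha-1}$. The key step is to bound the size of each fiber: for a fixed independent set $T$ with $\left\vert T\right\vert=\alpha-1$, a pair $(S,v)$ satisfies $S-v=T$ precisely when $v\in V(G)-T$ and $T\cup\{v\}$ is independent, that is, $v\in V(G)-N_{G}[T]$. Hence the number of preimages of $T$ is at most $\left\vert V(G)-T\right\vert=n-(\alpha-1)\le n$. (Independent sets $T$ of size $\alpha-1$ that are maximal but not maximum simply contribute an empty fiber, which is harmless for an upper bound.)

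Summing the fiber sizes over all independent sets $T$ of cardinality $\alpha-1$ then yields
\[
\alpha\cdot s_{\alpha}=\left\vert\mathcal{P}\right\vert=\sum_{\left\vert T\right\vert=\alpha-1}\left\vert\{v:T\cup\{v\}\text{ is a maximum independent set}\}\right\vert\le n\cdot s_{\alpha-1},
\]
which is the desired inequality. I do not expect a genuine obstacle here: the argument is elementary double counting, and the only points requiring care are checking that the map lands among the sets counted by $s_{\alpha-1}$ and that each fiber has at most $n$ elements. One could even sharpen the fiber bound to $n-(\alpha-1)$, giving the stronger conclusion $\alpha\cdot s_{\alpha}\le(n-\alpha+1)\cdot s_{\alpha-1}$, but the weaker bound stated in the lemma already follows at once.
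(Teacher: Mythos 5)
Your double-counting argument is correct, and it even yields the slightly sharper bound $\alpha(G)\cdot s_{\alpha(G)}\le\left(  n-\alpha(G)+1\right)  \cdot s_{\alpha(G)-1}$. The paper does not prove this lemma itself but quotes it from \cite[Lemma 1]{LM18}, and your incidence count over pairs $(S,v)$ — each maximum independent set yields $\alpha(G)$ independent sets of size $\alpha(G)-1$, while each independent set of size $\alpha(G)-1$ extends to at most $n$ maximum independent sets — is essentially the same elementary argument as in that reference, so there is nothing further to compare.
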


The following lemma serves as an efficient tool that enables us to prove the
main theorem.

\begin{lemma}
\label{lemW2-reg} Let $I(G;x)=\sum\limits_{k=0}^{\alpha\left(  G\right)
}s_{k}x^{k}$. Then the following assertions are true:

\begin{enumerate}
\item[(i)] \cite[Theorem 2.1]{LM2017} if $G$ is $\lambda$-quasi-regularizable,
then
\[
(k+1)\cdot s_{k+1}\leq(n\left(  G\right)  -(\lambda+1)\cdot k)\cdot s_{k}%
\]
for all $0\leq k\leq\alpha\left(  G\right)  -1$;

\item[(ii)] \cite[Theorem 2.6]{DLMP} if $G$ is a connected graph in
$\mathbf{W}_{p}$, then
\[
p\cdot(\alpha\left(  G\right)  -k)\cdot s_{k}\leq(k+1)\cdot s_{k+1}%
\]
for all $1\leq k\leq\alpha\left(  G\right)  -1$.
\end{enumerate}
\end{lemma}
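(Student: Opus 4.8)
The plan is to establish both inequalities from a single double-counting identity for ``flags'' and then bound one quantity in two opposite directions. Fix $k$ with $0\le k\le\alpha(G)-1$ and count pairs $(J,v)$ in which $J$ is an independent set of $G$ with $|J|=k$ and $v\in V(G)-N_G[J]$; for any such pair $J\cup\{v\}$ is an independent set of size $k+1$. Counting by the resulting set $I=J\cup\{v\}$, each independent set $I$ of size $k+1$ arises from exactly $k+1$ pairs (choose which vertex of $I$ plays the role of $v$), so the number of pairs is $(k+1)\cdot s_{k+1}$. Counting instead by $J$, and using $|N_G[J]|=|J|+|N_G(J)|=k+|N_G(J)|$ together with $V(G_J)=V(G)-N_G[J]$, the number of pairs equals
\[
\sum_{\substack{J\text{ independent}\\|J|=k}}\bigl(n(G)-k-|N_G(J)|\bigr)=\sum_{\substack{J\text{ independent}\\|J|=k}}n(G_J).
\]
Hence $(k+1)\cdot s_{k+1}=\sum_{|J|=k}n(G_J)$, the sum being over all independent sets $J$ of size $k$, and it remains only to bound $n(G_J)$ uniformly over $J$ in each setting.

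For (i), if $G$ is $\lambda$-quasi-regularizable then $|N_G(J)|\ge\lambda\cdot|J|=\lambda k$ for every independent $J$ of size $k$, so each summand satisfies $n(G)-k-|N_G(J)|\le n(G)-(\lambda+1)k$. Summing over the $s_k$ independent sets of size $k$ yields $(k+1)\cdot s_{k+1}\le(n(G)-(\lambda+1)k)\cdot s_k$, as claimed.

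For (ii), fix $1\le k\le\alpha(G)-1$ and take any independent set $J$ with $|J|=k<\alpha(G)$. By Lemma~\ref{key2}(iii), $G_J\in\mathbf{W}_p$; since $\mathbf{W}_p\subseteq\mathbf{W}_1$ the graph $G$ is well-covered, so Lemma~\ref{CP}(ii) gives $\alpha(G_J)=\alpha(G)-k$. Applying Lemma~\ref{lem_key}(i) to each connected component of $G_J$ (each of which lies in $\mathbf{W}_p$ by Theorem~\ref{disconnected}) and summing, we obtain $n(G_J)\ge p\cdot\alpha(G_J)=p\cdot(\alpha(G)-k)$. Substituting this lower bound into the identity gives $(k+1)\cdot s_{k+1}\ge p\cdot(\alpha(G)-k)\cdot s_k$, as desired.

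The flag-counting identity is the routine part; the only real content is the control of $n(G_J)$. For (i) this is immediate from the definition of $\lambda$-quasi-regularizability, so essentially no work is needed. For (ii) the crux is that localizing at an independent set of size $k<\alpha(G)$ keeps us inside $\mathbf{W}_p$ while lowering the independence number by exactly $k$; this is precisely where the structural localization lemmas (Lemma~\ref{key2}(iii) and Lemma~\ref{CP}(ii)) together with the sharp vertex bound $n\ge p\cdot\alpha$ for $\mathbf{W}_p$ graphs (Lemma~\ref{lem_key}(i)) carry the argument. The main obstacle, already resolved by those lemmas, is ensuring that the localization neither escapes the class $\mathbf{W}_p$ nor violates the additivity $\alpha(G_J)=\alpha(G)-|J|$; once that is in hand, the inequality drops out of the counting.
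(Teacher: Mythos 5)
Your proof is correct. Note, however, that the paper itself offers no proof of this lemma to compare against: both parts are imported as citations, part (i) from \cite[Theorem 2.1]{LM2017} and part (ii) from \cite[Theorem 2.6]{DLMP}. Your double-counting identity $(k+1)\cdot s_{k+1}=\sum_{J}n(G_J)$, where the sum runs over independent sets $J$ of size $k$, is the standard device behind coefficient inequalities of this type and is in the spirit of the cited sources; what your write-up genuinely adds is that it makes the lemma self-contained within this paper, deriving part (ii) purely from the Section 2 machinery: Lemma \ref{key2}(iii) to keep $G_J$ inside $\mathbf{W}_{p}$, Lemma \ref{CP}(ii) for $\alpha(G_J)=\alpha(G)-k$ (legitimate since $\mathbf{W}_{p}\subseteq\mathbf{W}_{1}$), and Theorem \ref{disconnected} together with Lemma \ref{lem_key}(i) applied componentwise to get $n(G_J)\geq p\cdot\alpha(G_J)$. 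There is no circularity, since none of those Section 2 results rely on Lemma \ref{lemW2-reg}. Two minor points: Lemma \ref{lem_key} is stated only for $p\geq 2$, so for $p=1$ you should invoke the trivial bound $n(H)\geq\alpha(H)$ (or the paper's remark that the lemma also holds for well-covered graphs); and your argument for part (ii) never uses connectedness of $G$ itself, only of the components of $G_J$, so you actually prove the inequality for all $\mathbf{W}_{p}$ graphs, a slight strengthening of the stated claim.
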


\begin{theorem}
\label{log2} Let $G$ be a connected $\mathbf{W}_{p}$ graph with $p\neq2$,
$n=n(G)$ and $\alpha=\alpha(G)$. If%
\[
\frac{\alpha^{2}}{4\left(  \alpha+1\right)  }\leq p\text{ and }n\in\left[
(p+1)\cdot\alpha,p\cdot\alpha+2\sqrt{p\cdot\alpha+p}\right]  \text{ \ \ \ }%
\]
or
\[
\frac{\alpha\left(  \alpha-1\right)  }{\alpha+1}\leq p\text{ and }n\in\left(
p\cdot\alpha+2\sqrt{p\cdot\alpha+p},\frac{\allowbreak\left(  \alpha
^{2}+1\right)  \cdot p+\left(  \alpha-1\right)  ^{2}}{\alpha-1}\right]  ,
\]
then $I(G;x)$ is log-concave.
\end{theorem}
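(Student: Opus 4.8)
The plan is to pass to the ratios of consecutive coefficients. Because every $k$-subset of a maximum independent set is independent, we have $s_{k}\ge\binom{\alpha}{k}>0$ for $0\le k\le\alpha$, so the ratios $r_{k}:=s_{k+1}/s_{k}$ are well defined and log-concavity of $I(G;x)$ is equivalent to $r_{k-1}\ge r_{k}$ for every $1\le k\le\alpha-1$. Both hypotheses force $n\ge(p+1)\alpha$: in the first range this is the left endpoint, while in the second the assumption $p\ge\alpha(\alpha-1)/(\alpha+1)\ge\alpha^{2}/(4(\alpha+1))$ yields $\alpha\le2\sqrt{p(\alpha+1)}$ and hence $n>p\alpha+2\sqrt{p\alpha+p}\ge(p+1)\alpha$. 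Consequently Theorem \ref{mthm} applies and $G$ is $p$-quasi-regularizable, so Lemma \ref{lemW2-reg}(i) with $\lambda=p$ together with Lemma \ref{lemW2-reg}(ii) sandwich the ratios:
\[
\frac{p(\alpha-k)}{k+1}\le r_{k}\le\frac{n-(p+1)k}{k+1},\qquad 1\le k\le\alpha-1,
\]
the upper bound being valid also at $k=0$, where in fact $r_{0}=n$.

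I would then deduce $r_{k-1}\ge r_{k}$ by dominating the upper bound for $r_{k}$ with the lower bound for $r_{k-1}$. The case $k=1$ is immediate, since $r_{0}=n\ge(n-p-1)/2\ge r_{1}$. For $2\le k\le\alpha-1$, clearing the positive denominators in $p(\alpha-k+1)/k\ge(n-(p+1)k)/(k+1)$ turns the desired inequality into the single quadratic condition
\[
f(k):=k^{2}+(p\alpha-n)k+p(\alpha+1)\ge0 .
\]
Thus the whole statement reduces to showing that this upward parabola is nonnegative at the integers $k=2,\dots,\alpha-1$.

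For the first range the hypothesis $n\le p\alpha+2\sqrt{p\alpha+p}$ is exactly the condition that the discriminant $(n-p\alpha)^{2}-4p(\alpha+1)$ of $f$ be nonpositive; then $f(k)\ge0$ for all real $k$ and we are done. For the second range the discriminant is positive, so $f$ is negative between its two roots; here the key observation is that its vertex $k^{\ast}=(n-p\alpha)/2$ lies to the right of $\alpha-1$. Indeed $p\ge\alpha(\alpha-1)/(\alpha+1)$ gives $\sqrt{p(\alpha+1)}\ge\alpha-1$, whence $n>p\alpha+2\sqrt{p\alpha+p}\ge p\alpha+2(\alpha-1)$ and $k^{\ast}>\alpha-1$. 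Therefore $f$ is strictly decreasing on $[1,\alpha-1]$, its minimum over the relevant integers is $f(\alpha-1)=(\alpha-1)^{2}+p(\alpha^{2}+1)-n(\alpha-1)$, and this is nonnegative precisely when $n\le\frac{(\alpha^{2}+1)p+(\alpha-1)^{2}}{\alpha-1}$, the stated upper endpoint.

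The main obstacle is the second range, where $f$ genuinely dips below zero and the naive sandwich fails for a real variable; the remedy is the monotonicity argument that pins the minimum of $f$ on the integer window to $k=\alpha-1$, which is exactly why the lower bound on $p$ is imposed (it already suffices to have $p\ge(\alpha-1)^{2}/(\alpha+1)$ to force $k^{\ast}\ge\alpha-1$, and the stated $p\ge\alpha(\alpha-1)/(\alpha+1)$ comfortably guarantees both this and $n\ge(p+1)\alpha$). It is also worth recording, as a sanity check, that $p\ge\alpha^{2}/(4(\alpha+1))$ is precisely the non-emptiness condition for the first $n$-interval, and that the second range tacitly assumes $\alpha\ge2$.
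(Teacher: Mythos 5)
Your proposal is correct and takes essentially the same route as the paper's proof: invoke Theorem \ref{mthm} to obtain $p$-quasi-regularizability, sandwich consecutive coefficients via Lemma \ref{lemW2-reg}(i)--(ii), and reduce log-concavity to the nonnegativity of the quadratic $f(k)=k^{2}-(n-p\alpha)k+p(\alpha+1)$ on $\{1,\ldots,\alpha-1\}$, split according to the sign of its discriminant. Your only departures are cosmetic: you phrase the argument with ratios and treat $k=1$ separately (which neatly avoids applying Lemma \ref{lemW2-reg}(ii) at index $0$), and in the positive-discriminant case you use the vertex location $k^{\ast}>\alpha-1$ together with $f(\alpha-1)\geq0$ instead of the paper's comparison of $\alpha-1$ with the smaller root $k_{1}$ --- logically equivalent steps.
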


\begin{proof}
If $\alpha=1$, then $G$ is a complete graph, and its independence polynomial
is given by $1+nx$. Consequently, this polynomial is log-concave. Therefore,
we may assume that $\alpha\geq2$. By Theorem \ref{mthm}, $G$ is $p$%
-quasi-regularizable, because $n\geq(p+1)\cdot\alpha$ and $G\in\mathbf{W}_{p}%
$. For all $1\leq k\leq\alpha-1$, Lemma \ref{lemW2-reg} implies
\begin{gather*}
(k+1)\cdot s_{k+1}\leq(n-(p+1)\cdot k)\cdot s_{k},\text{ and }\\
p\cdot(\alpha-k+1)\cdot s_{k-1}\leq k\cdot s_{k}.
\end{gather*}
Since $n\geq(p+1)\cdot\alpha$, then
\[
n-(p+1)\cdot k\geq n-(p+1)\cdot(\alpha-1)=n-(p+1)\cdot\alpha+p+1>0.
\]
Hence,
\[
s_{k}^{2}\geq\frac{(k+1)\cdot p\cdot(\alpha-k+1)}{k\cdot(n-(p+1)\cdot k)}\cdot
s_{k-1}\cdot s_{k+1}.
\]
We know that
\begin{align*}
\frac{(k+1)\cdot p\cdot(\alpha-k+1)}{k\cdot(n-(p+1)\cdot k)}\geq1  &
\Leftrightarrow(k+1)\cdot p\cdot(\alpha-k+1)\geq k\cdot(n-(p+1)\cdot k)\\
& \Leftrightarrow k^{2}-(n-p\cdot\alpha)\cdot k+p\cdot\alpha+p\geq0.
\end{align*}
Now, we consider the function
\[
f(k)=k^{2}-(n-p\cdot\alpha)\cdot k+p\cdot\alpha+p.
\]
Hence, its discriminant is
\[
\Delta=(n-p\cdot\alpha)^{2}-4\left(  p\cdot\alpha+p\right)  .
\]
\vskip0.5em \noindent\textit{Case 1}. $\Delta\leq0$. This is equivalent to
$n\leq p\cdot\alpha+2\sqrt{p\cdot\alpha+p}$. Consequently, $f(k)\geq0 $ for
every $k\in%
\mathbb{R}
$. Thus $s_{k}^{2}\geq s_{k-1}\cdot s_{k+1}$ for each $k\in\left\{
1,\ldots,\alpha-1\right\}  $, whenever%
\[
\left(  p+1\right)  \cdot\alpha\leq n\leq p\cdot\alpha+2\sqrt{p\cdot\alpha+p}.
\]

Clearly, the set of integers $n$ satisfying the above constraints is not empty
if and only if $\frac{\alpha^{2}}{4\left(  \alpha+1\right)  }\leq p$.
\vskip0.5em \noindent\textit{Case 2}. $\Delta>0$. This is equivalent to
$n>p\cdot\alpha+2\sqrt{p\cdot\alpha+p}$. Then, $f(k)\geq0$ for every
\[
k\leq k_{1}=\frac{n-p\cdot\alpha-\sqrt{\Delta}}{2}\text{ or }k\geq k_{2}%
=\frac{n-p\cdot\alpha+\sqrt{\Delta}}{2}.
\]

\vskip0.5em \noindent\textit{Subcase 2.1}. $\alpha-1\leq k_{1}$. \vskip0.5em
It means%
\begin{align*}
& \alpha-1\leq\frac{n-p\cdot\alpha-\sqrt{\Delta}}{2}\\
& \Leftrightarrow\sqrt{(n-p\cdot\alpha)^{2}-4\left(  p\cdot\alpha+p\right)
}\leq n-p\cdot\alpha-2\alpha+2,
\end{align*}
which may be true if and only if $p\cdot\alpha+2\alpha-2<n$, since $\Delta>0$.
If this constraint is satisfied, then we may continue as follows:
\begin{align*}
& (n-p\cdot\alpha)^{2}-4\left(  p\cdot\alpha+p\right)  \leq\left(
n-p\cdot\alpha-2\alpha+2\right)  ^{2}\\
& \Leftrightarrow0\leq4n+4p-8\alpha-4n\alpha+4\alpha^{2}+4p\alpha^{2}+4\\
& \Leftrightarrow n\leq\frac{\allowbreak\left(  \alpha^{2}+1\right)  \cdot
p+\left(  \alpha-1\right)  ^{2}}{\alpha-1}.
\end{align*}
Thus $s_{k}^{2}\geq s_{k-1}\cdot s_{k+1}$ for each $k\in\left\{
1,\ldots,\alpha-1\right\}  $, whenever%
\[
\Delta>0\text{ and }\alpha\cdot\left(  p+1\right)  +\left(  \alpha-1\right)
\leq n\leq\frac{\allowbreak\left(  \alpha^{2}+1\right)  \cdot p+\left(
\alpha-1\right)  ^{2}}{\alpha-1}.
\]
By the second constraint, if $\frac{\alpha\left(  \alpha-1\right)  }{\alpha
+1}>p$, then the set of integers $n$ satisfying the above constraints is
empty. On the other hand, if $\frac{\alpha\left(  \alpha-1\right)  }{\alpha
+1}\leq p$, then
\begin{align*}
& \alpha\cdot\left(  p+1\right)  +\left(  \alpha-2\right)  \leq p\cdot
\alpha+2\sqrt{p\cdot\alpha+p}\\
& \Leftrightarrow\alpha-1\leq\sqrt{p\cdot\alpha+p}\Leftrightarrow\alpha
^{2}-2\alpha+1\leq p\cdot\alpha+p
\end{align*}
which is true, because%
\[
\frac{\left(  \alpha-1\right)  ^{2}}{\alpha+1}\leq\frac{\alpha\left(
\alpha-1\right)  }{\alpha+1}\leq p.
\]
Thus $s_{k}^{2}\geq s_{k-1}\cdot s_{k+1}$ for each $k\in\left\{
1,\ldots,\alpha-1\right\}  $, whenever%
\[
p\cdot\alpha+2\sqrt{p\cdot\alpha+p}<n\leq\frac{\allowbreak\left(  \alpha
^{2}+1\right)  \cdot p+\left(  \alpha-1\right)  ^{2}}{\alpha-1}\text{ and
}\frac{\alpha\left(  \alpha-1\right)  }{\alpha+1}\leq p.
\]
\vskip0.5em \noindent\textit{Subcase 2.2}. $k_{2}\leq1$. \vskip0.5em It means%
\begin{align*}
& \frac{n-p\cdot\alpha+\sqrt{(n-p\cdot\alpha)^{2}-4\left(  p\cdot
\alpha+p\right)  }}{2}\leq1\\
& \Leftrightarrow\sqrt{(n-p\cdot\alpha)^{2}-4\left(  p\cdot\alpha+p\right)
}\leq2-n+p\alpha.
\end{align*}

It may be true if and only if $n<2+p\alpha$, because $\Delta>0$. On the other
hand, the inequality $\left(  p+1\right)  \cdot\alpha\leq n$ implies
$\alpha=1$, which contradicts our assumption that $\alpha\geq2$.

To conclude, it is worth mentioning that the inequality
\[
p\cdot\alpha+2\sqrt{p\cdot\alpha+p}\leq\frac{\left(  \alpha^{2}+1\right)
\cdot p+(\alpha-1)^{2}}{\alpha-1}%
\]
is true, because it is equivalent to $0\leq\left(  p+2\alpha+p\cdot
\alpha-\alpha^{2}-1\right)  ^{2}$. Moreover, if both $\alpha\geq2$ and
$\frac{\alpha\left(  \alpha-1\right)  }{\alpha+1}\leq p$, then
\[
p+2\alpha+p\alpha-\alpha^{2}-1\geq\frac{\alpha\left(  \alpha-1\right)
}{\alpha+1}+2\alpha+\frac{\alpha^{2}\left(  \alpha-1\right)  }{\alpha
+1}-\alpha^{2}-1=\alpha-1>0\text{.}%
\]
In other words, the interval $\left[  \left(  p+1\right)  \cdot\alpha+\left(
\alpha-1\right)  ,\frac{\allowbreak\left(  \alpha^{2}+1\right)  \cdot
p+\left(  \alpha-1\right)  ^{2}}{\alpha-1}\right]  $ is not included in the
interval $\left[  (p+1)\cdot\alpha,p\cdot\alpha+2\sqrt{p\cdot\alpha+p}\right]
$.
\end{proof}

\begin{corollary}
Let $G$ be a connected $\mathbf{W}_{p}$ graph with $p\ne2$, $n=n(G)$ and
$\alpha=\alpha(G)$. If $\alpha-1\leq p$ and
\[
(p+1)\cdot\alpha\leq n\leq\frac{\allowbreak\left(  \alpha^{2}+1\right)  \cdot
p+\left(  \alpha-1\right)  ^{2}}{\alpha-1},
\]
then $I(G;x)$ is log-concave.
\end{corollary}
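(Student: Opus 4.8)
The plan is to deduce this corollary directly from Theorem~\ref{log2} by showing that the single hypothesis $\alpha-1\leq p$ forces both of the coefficient constraints on $p$ that appear there, and that the interval displayed here is exactly the union of the two intervals treated in Theorem~\ref{log2}. First I would dispose of the trivial case $\alpha=1$: then $G$ is complete, $I(G;x)=1+nx$ is log-concave, and there is nothing to prove. This also lets me assume $\alpha\geq2$, so that the upper bound $\frac{\left(\alpha^{2}+1\right)\cdot p+\left(\alpha-1\right)^{2}}{\alpha-1}$ is well-defined.

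Next I would verify the two threshold inequalities for $p$ under the assumption $\alpha\geq2$. For the first, $\frac{\alpha^{2}}{4\left(\alpha+1\right)}\leq\alpha-1$ is equivalent to $3\alpha^{2}\geq4$, which holds since $\alpha\geq2$; hence $\alpha-1\leq p$ gives $\frac{\alpha^{2}}{4\left(\alpha+1\right)}\leq p$. For the second, dividing $\frac{\alpha\left(\alpha-1\right)}{\alpha+1}\leq\alpha-1$ by $\alpha-1>0$ reduces it to $\frac{\alpha}{\alpha+1}\leq1$, which is immediate; thus $\alpha-1\leq p$ also yields $\frac{\alpha\left(\alpha-1\right)}{\alpha+1}\leq p$. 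Both $p$-constraints required in the two hypothesis blocks of Theorem~\ref{log2} are therefore automatically in force.

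Finally I would split according to where $n$ sits relative to $p\cdot\alpha+2\sqrt{p\cdot\alpha+p}$. If $(p+1)\cdot\alpha\leq n\leq p\cdot\alpha+2\sqrt{p\cdot\alpha+p}$, then, using $\frac{\alpha^{2}}{4\left(\alpha+1\right)}\leq p$, the first hypothesis block of Theorem~\ref{log2} applies and $I(G;x)$ is log-concave. Otherwise $p\cdot\alpha+2\sqrt{p\cdot\alpha+p}<n\leq\frac{\left(\alpha^{2}+1\right)\cdot p+\left(\alpha-1\right)^{2}}{\alpha-1}$, and, using $\frac{\alpha\left(\alpha-1\right)}{\alpha+1}\leq p$, the second hypothesis block applies, again giving log-concavity. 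Since the corollary's interval is precisely the union of these two ranges, every admissible $n$ is covered.

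The argument is essentially bookkeeping, so I do not anticipate a genuine obstacle. The only points needing care are the well-definedness of the upper bound when $\alpha=1$, handled by the separate trivial case, and the fact that the two sub-intervals meet without a gap; this last point is already recorded at the end of the proof of Theorem~\ref{log2}, where $p\cdot\alpha+2\sqrt{p\cdot\alpha+p}\leq\frac{\left(\alpha^{2}+1\right)\cdot p+\left(\alpha-1\right)^{2}}{\alpha-1}$ is established, so no case is lost in the passage from $n\leq p\cdot\alpha+2\sqrt{p\cdot\alpha+p}$ to $n>p\cdot\alpha+2\sqrt{p\cdot\alpha+p}$.
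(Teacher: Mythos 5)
Your proposal is correct and follows exactly the intended route: the paper states this corollary without proof as an immediate consequence of Theorem~\ref{log2}, and the implicit argument is precisely your bookkeeping, namely that for $\alpha\geq2$ the hypothesis $\alpha-1\leq p$ implies both $\frac{\alpha^{2}}{4(\alpha+1)}\leq p$ and $\frac{\alpha(\alpha-1)}{\alpha+1}\leq p$, so the two $n$-intervals of Theorem~\ref{log2} together cover the stated range. Your separate treatment of $\alpha=1$ (where the upper bound is undefined) is a sensible extra precaution that the paper leaves implicit.
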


The graph $C_{5}$, which is a $\mathbf{W}_{2}$ graph, illustrates that the
independence polynomial $I(G;x)$ can be log-concave even when $(p+1)\cdot
\alpha(G)>n(G)$. Moreover, note that $I(G;x)$ may remain log-concave even when
$n(G)>p\cdot\alpha(G)+2\sqrt{p\cdot\alpha+p}$. For instance, consider the
well-covered graphs $G_{q}$ with $q\geq5$ from Figure \ref{fig2}. It is clear
that $\alpha\left(  G_{q}\right)  =3$, $m\left(  G_{q}\right)  =6+\frac
{q(q-1)}{2}$, and
\[
n\left(  G_{q}\right)  =q+4>p\cdot\alpha\left(  G_{q}\right)  +2\sqrt
{p\cdot\alpha(G_{q})+p}=7,
\]
and its independence polynomial
\[
I(G_{q};x)=1+\left(  q+4\right)  x+4qx^{2}+\left(  2q-2\right)  x^{3}%
\]
is log-concave. Notice that $G_{q}$ is $\frac{3}{2}$-quasi-regularizable and
belongs to $\mathbf{W}_{1}$.

\begin{figure}[h]
\setlength{\unitlength}{1cm} \begin{picture}(5,2)\thicklines
		\multiput(5,0.5)(1,0){3}{\circle*{0.29}}
		\multiput(5,1.5)(1,0){3}{\circle*{0.29}}
		\put(5,0.5){\line(1,0){2}}
		\put(5,0.5){\line(0,1){1}}
		\put(5,1.5){\line(1,0){2}}
		
		\put(6,0.5){\line(0,1){1}}

		\put(7,0.5){\line(0,1){1}}

		\put(5,0.1){\makebox(0,0){$x_{1}$}}
		\put(6,0.1){\makebox(0,0){$x_{3}$}}
		
		\put(5,1.85){\makebox(0,0){$x_{2}$}}
		\put(6,1.85){\makebox(0,0){$x_{4}$}}
		
		\put(7,0.1){\makebox(0,0){$y_{1}$}}
		\put(7,1.85){\makebox(0,0){$y_{2}$}}
		
		\multiput(7,0.5)(0.2,0){15}{\circle*{0.1}}
		\multiput(7,1.5)(0.2,0){15}{\circle*{0.1}}
		
		\multiput(10,0.5)(0,0.2){6}{\circle*{0.1}}
		
		\put(8.5,1){\makebox(0,0){$K_{q}$}}
		
		
	\end{picture}
\caption{$G_{q}$ is a well-covered $\frac{3}{2}$-quasi-regularizable graph.}%
\label{fig2}%
\end{figure}
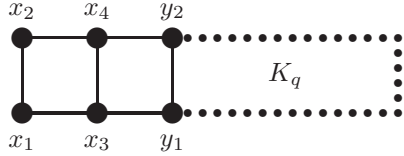

\begin{theorem}
\label{theorem17}Let $G$ be a $\mathbf{W}_{p}$ graph with $p\ne2$. If the
inequalities
\[
(p+1)\cdot\alpha\left(  H\right)  \leq n\left(  H\right)  \leq p\cdot
\alpha(H)+2\sqrt{p\cdot\alpha(H)+p},
\]
or%
\[
p\cdot\alpha(H)+2\sqrt{p\cdot\alpha(H)+p}<n\left(  H\right)  \leq
\frac{\allowbreak\left(  \alpha\left(  H\right)  ^{2}+1\right)  \cdot
p+\left(  \alpha\left(  H\right)  -1\right)  ^{2}}{\alpha\left(  H\right)  -1}%
\]
hold for every connected component $H$ of $G$, then $I(G;x)$ is log-concave.
\end{theorem}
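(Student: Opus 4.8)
The plan is to reduce the statement to the connected case settled in Theorem \ref{log2} and then recombine the components through the multiplicativity of the independence polynomial. First I would invoke Theorem \ref{disconnected}: since $G\in\mathbf{W}_{p}$, every connected component of $G$ is itself a $\mathbf{W}_{p}$ graph. Writing the components as $H_{1},\ldots,H_{s}$, the hypothesis of the theorem is precisely that each $H_{i}$ satisfies one of the two numerical constraints demanded by Theorem \ref{log2}. The second ingredient is the structural identity that the independence polynomial is multiplicative over disjoint unions: because there are no edges between distinct components, an independent set of $G$ is exactly a disjoint union of independent sets chosen independently from each $H_{i}$, whence
\[
I(G;x)=\prod_{i=1}^{s}I(H_{i};x).
\]

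Next, for each fixed $i$ I would apply Theorem \ref{log2} to the connected $\mathbf{W}_{p}$ graph $H_{i}$ to conclude that $I(H_{i};x)$ is log-concave. The only point needing care is the degenerate case $\alpha(H_{i})=1$, in which the second upper bound is formally undefined because of the factor $\alpha(H_{i})-1$ in the denominator; here $H_{i}$ is a complete graph and $I(H_{i};x)=1+n(H_{i})x$ is trivially log-concave, exactly as handled in the opening lines of the proof of Theorem \ref{log2}. Thus in all cases each factor $I(H_{i};x)$ is log-concave.

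Finally I would assemble the factors using Lemma \ref{log-uni}: the product of two log-concave polynomials is again log-concave, so a one-line induction on the number of components $s$ shows that $\prod_{i=1}^{s}I(H_{i};x)$ is log-concave, which by the identity above is $I(G;x)$. I expect no genuine obstacle, since the argument is essentially an assembly of Theorem \ref{disconnected}, the factorization, Theorem \ref{log2}, and Lemma \ref{log-uni}. The one mild subtlety worth flagging is that closure of log-concavity under products relies on the sequences having nonnegative entries with no internal zeros; this is automatic for independence polynomials, as $s_{0}=1$ and the coefficients $s_{1},\ldots,s_{\alpha(H_{i})}$ are all strictly positive (an independent set of size $k$ yields independent subsets of every smaller size), so the hypotheses behind Lemma \ref{log-uni} are satisfied for each factor.
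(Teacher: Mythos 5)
Your proposal is correct and follows essentially the same route as the paper's own proof: decompose $G$ into connected components via Theorem \ref{disconnected}, apply Theorem \ref{log2} to each component, and recombine using the factorization $I(G;x)=\prod_{i}I(H_{i};x)$ together with Lemma \ref{log-uni}. The extra care you take with the $\alpha(H_{i})=1$ case and the positivity of the coefficient sequences is sound but not a departure from the paper's argument, which treats these points implicitly.
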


\begin{proof}
If $G$ is connected, the theorem follows from Theorem \ref{log2}.

If $G$ is disconnected with $c(G)=q\geq2$, then $G$ is a disjoint union of
connected components $H_{i}$ for all $1\leq i\leq q$. By Theorem
\ref{disconnected}, $H_{i}\in\mathbf{W}_{p}$. By Theorem \ref{log2},
$I(H_{i};x) $ is log-concave. By Lemma \ref{log-uni}, we finally obtain that
\[
I(G;x)=I(H_{1};x)\cdot I(H_{2};x) \cdots I(H_{q};x)
\]
is log-concave as well.
\end{proof}

\begin{corollary}
\label{cor7} Let $G$ be a $\mathbf{W}_{p}$ graph with $p\ne2$. If
$(p+1)\cdot\alpha(G) =n(G)$ and $p\ge\frac{\alpha(G)^{2}}{4(\alpha(G)+1)}$,
then $I(G;x)$ is log-concave.
\end{corollary}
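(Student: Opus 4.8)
The plan is to reduce the statement to \emph{Case 1} ($\Delta\leq0$) in the proof of Theorem \ref{log2}. The pivotal observation is a purely algebraic equivalence: evaluating the discriminant $\Delta=(n-p\cdot\alpha)^{2}-4(p\cdot\alpha+p)$ at $n=(p+1)\cdot\alpha$ gives $n-p\cdot\alpha=\alpha$, so $\Delta=\alpha^{2}-4p(\alpha+1)$, and therefore
\[
\Delta\leq0\iff\alpha^{2}\leq4p(\alpha+1)\iff p\geq\frac{\alpha^{2}}{4(\alpha+1)}.
\]
Equivalently, the hypothesis $p\geq\frac{\alpha^{2}}{4(\alpha+1)}$ is exactly the condition under which the left endpoint $(p+1)\cdot\alpha$ of the first interval in Theorem \ref{log2} does not exceed its right endpoint $p\cdot\alpha+2\sqrt{p\cdot\alpha+p}$. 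Hence the single value $n=(p+1)\cdot\alpha$ always lies in $\left[(p+1)\cdot\alpha,\,p\cdot\alpha+2\sqrt{p\cdot\alpha+p}\right]$.

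First I would dispose of the trivial case $\alpha=1$: then $G$ is a complete graph and $I(G;x)=1+n\cdot x$ is log-concave. Assuming $\alpha\geq2$, if $G$ is connected the result is immediate from Theorem \ref{log2} (equivalently, from Theorem \ref{theorem17} applied to the single component $H=G$), since the equivalence above places $n=(p+1)\cdot\alpha$ in the first interval; concretely, we are in the situation $\Delta\leq0$, so the quadratic $f(k)=k^{2}-(n-p\cdot\alpha)\cdot k+p\cdot\alpha+p$ is nonnegative for every real $k$, which is precisely what forces $s_{k}^{2}\geq s_{k-1}\cdot s_{k+1}$ for all $1\leq k\leq\alpha-1$.

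For the general (possibly disconnected) case I would factor $I(G;x)=\prod_{i}I(H_{i};x)$ over the connected components $H_{i}$ and invoke Lemma \ref{log-uni}, reducing to the log-concavity of each factor. Components with $\alpha(H_{i})=1$ are complete and handled as above. The main obstacle is the remaining components with $\alpha(H_{i})\geq2$, where I would need to verify the first-interval bounds $(p+1)\cdot\alpha(H_{i})\leq n(H_{i})\leq p\cdot\alpha(H_{i})+2\sqrt{p\cdot\alpha(H_{i})+p}$ component-wise. The upper bound is not serious, since $x\mapsto\frac{x^{2}}{4(x+1)}$ is increasing and $\alpha(G)\geq\alpha(H_{i})$, so $p\geq\frac{\alpha(G)^{2}}{4(\alpha(G)+1)}\geq\frac{\alpha(H_{i})^{2}}{4(\alpha(H_{i})+1)}$ guarantees that each component's first interval is nonempty. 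The delicate point is the lower bound $n(H_{i})\geq(p+1)\cdot\alpha(H_{i})$: the global equality $\sum_{i}n(H_{i})=(p+1)\sum_{i}\alpha(H_{i})$ together with $n(H_{i})\geq p\cdot\alpha(H_{i})$ from Lemma \ref{lem_key}(i) does not by itself force this for every component, because complete components can carry slack (for instance $K_{p}\cup K_{p+2}$). I therefore expect the crux of the disconnected case to be a bookkeeping argument controlling how the equality distributes across components; once each non-complete component is shown to satisfy the first-interval bounds, Theorem \ref{theorem17} finishes the proof.
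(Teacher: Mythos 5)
Your handling of the connected case is correct and is exactly the paper's argument: with $n=(p+1)\cdot\alpha$ one gets $n-p\cdot\alpha=\alpha$, so $\Delta=\alpha^{2}-4p\left(\alpha+1\right)\leq0$ precisely when $p\geq\frac{\alpha^{2}}{4\left(\alpha+1\right)}$, which places $n$ in the first interval of Theorem \ref{log2}. The genuine gap in your proposal is the one you yourself flag: for disconnected $G$ you never supply the ``bookkeeping argument,'' so the proposal is not a complete proof of the corollary as stated, since the statement does not assume connectivity and the componentwise lower bound $n(H_{i})\geq(p+1)\cdot\alpha(H_{i})$ is exactly what is needed to invoke Theorem \ref{log2} (or Theorem \ref{theorem17}) factor by factor.

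That said, your suspicion is not a technicality you merely failed to finish --- it exposes a flaw in the paper's own proof. The paper disposes of the disconnected case with the assertion ``Clearly, if $(p+1)\cdot\alpha(G)=n(G)$, then the same is true for every connected component $H$ of $G$,'' and everything else rests on it. That assertion is false, by the paper's own example: for $p\geq3$, the graph $K_{p}\cup K_{p+2}$ is a $\mathbf{W}_{p}$ graph (as noted right after Theorem \ref{mthm}) with $n=2p+2=(p+1)\cdot\alpha$ and $p\geq\frac{\alpha^{2}}{4\left(\alpha+1\right)}=\frac{1}{3}$, so it satisfies every hypothesis of Corollary \ref{cor7}, yet its components have $n(K_{p})=p<p+1$ and $n(K_{p+2})=p+2>p+1$. (The conclusion survives for this graph only because both components are complete, so both factors of $I(G;x)$ are linear and Lemma \ref{log-uni} applies.) A genuine repair would treat complete components separately --- their independence polynomials are linear, hence log-concave --- and then show every non-complete component still lands in the intervals of Theorem \ref{log2}; but since Lemma \ref{lem_key}(i) gives only $n(H)\geq p\cdot\alpha(H)$ for such a component, slack created by complete components (as in $K_{p+2}$) could in principle be absorbed by a non-complete component with $n(H)<(p+1)\cdot\alpha(H)$, and nothing in the paper rules this out. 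So the bookkeeping you anticipate is not routine: neither your proposal nor the paper's proof actually closes the disconnected case, and your refusal to write ``clearly'' where the paper does is the more honest position.
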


\begin{proof}
Clearly, if $(p+1)\cdot\alpha\left(  G\right)  =n\left(  G\right)  $, then the
same is true for every connected component $H$ of $G$. Hence, $(p+1)\cdot
\alpha\left(  H\right)  =n\left(  H\right)  \leq p\cdot\alpha\left(  H\right)
+ 2\sqrt{p\cdot\alpha(H)+p}$, whenever $p\geq\frac{\alpha(H)^{2}}%
{4(\alpha(H)+1)}$. In addition, Theorem \ref{disconnected} claims that every
$H\in\mathbf{W}_{p}$. Therefore, by Theorem \ref{log2}, $I(H;x)$ is
log-concave. Finally, by Lemma \ref{log-uni}, $I(G;x)=%
{\displaystyle\prod\limits_{H}}
I(H;x) $ is log-concave.
\end{proof}

The disjoint union of two graphs $G_{1}$ and $G_{2}$ is the graph $G=G_{1}\cup
G_{2}$ whose vertex set is the disjoint union of $V(G_{1})$ and $V(G_{2})$,
and whose edge set is the disjoint union of $E(G_{1})$ and $E(G_{2})$.
Specifically, $qG$ denotes the disjoint union of $q>1$ copies of the graph
$G$. The join (or Zykov sum) of $G_{1}$ and $G_{2}$ is the graph $G_{1}+G_{2}%
$, with vertex set $V(G_{1})\cup V(G_{2})$ and edge set $E(G_{1})\cup
E(G_{2})\cup\{v_{1}v_{2}:v_{1}\in V(G_{1}),v_{2}\in V(G_{2})\}$. The
lexicographic product $G[H]$ of the graphs $G$ and $H$ is defined as follows:
$V\left(  G[H]\right)  =V\left(  G\right)  \times V\left(  H\right)  $ and two
verices $(g_{1},h_{1})$ and $(g_{2},h_{2})$ are adjacent in $G[H]$ if and only
if either $g_{1}g_{2}\in E\left(  G\right)  $, or $g_{1}=g_{2}$ and
$h_{1}h_{2}\in E\left(  H\right)  $.

\begin{example}
For $m\geq2$ and $24\leq n\leq2452$, let $G=K_{m}$ and $H=4K_{10}+K_{n(4)}$,
where $K_{n(4)}$ is the complete $n$-partite graph where each of the $n$ parts
has $4$ vertices. Then $G$ is a $\mathbf{W}_{2}$ graph, while $H$ is a
$\mathbf{W}_{1}$ graph \cite[p. 240]{LM17}. By \cite[Theorem 8.2]{Plum}, the
lexicographic product $G[H]$ of $G$ and $H$ is a $\mathbf{W}_{2}$ graph. The
independence polynomials of $G$ and $H$ are as follows:
\begin{align*}
I(G;x)  & =1+mx,\\
I(H;x)  & =1+(40+4n)x+(600+6n)x^{2}+(4000+4n)x^{3}+(10000+n)x^{4}.
\end{align*}
By applying the independence polynomial formula for $G[H]$ as presented
in~\cite{BHN04}, we obtain:
\begin{align*}
I(G[H];x)  & =I(G;I(H;x)-1)=1+m\cdot(I(H;x)-1)\\
& =1+m(40+4n)x+m(600+6n)x^{2}+m(4000+4n)x^{3}+m(10000+n)x^{4}.
\end{align*}
Notice that this polynomial is not log-concave.
\end{example}

A well-covered graph $G$ having no isolated vertices and satisfying
$2\cdot\alpha(G)=n(G)$ is called \textit{very well-covered} \cite{Favaron1982}%
. Previously, it was established that for any integer $\alpha\geq8$, there
exist connected well-covered graphs $G$ with $\alpha\left(  G\right)  =\alpha
$, whose independence polynomials are not unimodal (hence, not log-concave)
\cite{LevMan2006}. It was also shown that the independence polynomial of a
very well-covered graph $G$ is unimodal when $\alpha\left(  G\right)  \leq9$
and is log-concave when $\alpha\left(  G\right)  \leq5$ \cite{LM18}. Now,
Corollary \ref{cor7} corroborates \cite[Theorem 2.4(v)]{LM18} and
\cite[Conjecture 3.10]{LM17} for the case $\alpha(G)\leq5$, as well.

\begin{corollary}
\label{cor11} Let $G$ be a very well-covered graph. If $\alpha(G)\le5$, then
the independence polynomial of $G$ is log-concave.
\end{corollary}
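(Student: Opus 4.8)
The plan is to reduce to the connected case and then trap each coefficient $s_k$ between the two complementary inequalities of Lemma~\ref{lemW2-reg}. If $G$ is disconnected, every connected component $H$ is again well-covered without isolated vertices, and since $n(H)\ge 2\alpha(H)$ holds for each such component (a consequence of Theorem~\ref{Berge}: quasi-regularizability applied to a maximum independent set gives $n\ge 2\alpha$), the hypothesis $n(G)=2\alpha(G)$ forces $n(H)=2\alpha(H)$ for every $H$. Thus each component is itself very well-covered with $\alpha(H)\le\alpha(G)\le 5$, and by Lemma~\ref{log-uni} it suffices to show each factor $I(H;x)$ is log-concave. Hence I may assume $G$ is connected with $n=2\alpha$; the case $\alpha=1$ gives $I(G;x)=1+nx$, trivially log-concave, so I further assume $2\le\alpha\le 5$.

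For connected $G$, I would invoke both halves of Lemma~\ref{lemW2-reg}. Since $G$ is well-covered without isolated vertices, Theorem~\ref{Berge} shows it is $1$-quasi-regularizable, so part (i) with $\lambda=1$ gives $(k+1)s_{k+1}\le(n-2k)s_k=2(\alpha-k)s_k$ for $0\le k\le\alpha-1$. Since $G\in\mathbf{W}_1$ is connected, part (ii) with $p=1$ gives $(\alpha-k)s_k\le(k+1)s_{k+1}$, and after shifting the index, $(\alpha-k+1)s_{k-1}\le k s_k$ for $1\le k\le\alpha-1$. All coefficients $s_k$ with $0\le k\le\alpha$ are strictly positive (they count subsets of a maximum independent set) and $n-2k=2(\alpha-k)>0$ for $k<\alpha$, so both inequalities may be solved for $s_{k\pm1}$ and multiplied, yielding $s_{k-1}s_{k+1}\le\frac{2k(\alpha-k)}{(k+1)(\alpha-k+1)}\,s_k^2$ for all $1\le k\le\alpha-1$.

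It then remains to check that the displayed coefficient is at most $1$, which after clearing denominators is equivalent to the quadratic inequality $g(k):=k^2-\alpha k+(\alpha+1)\ge 0$. For $\alpha\le 4$ the discriminant $\alpha^2-4\alpha-4$ is negative, so $g(k)\ge 0$ for all real $k$ and log-concavity follows immediately; this is precisely the regime already covered by Corollary~\ref{cor7} with $p=1$, since $1\ge\frac{\alpha^2}{4(\alpha+1)}$ exactly when $\alpha\le 4$.

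The genuinely delicate point — and the step I expect to be the main obstacle — is $\alpha=5$. Here the discriminant is positive, so the real-variable criterion of Corollary~\ref{cor7} (and of Theorem~\ref{log2}) breaks down: indeed $\frac{\alpha^2}{4(\alpha+1)}=\frac{25}{24}>1=p$, and one cannot guarantee $g\ge 0$ on all of $\mathbb{R}$. The resolution is arithmetic rather than analytic: $g(k)=k^2-5k+6=(k-2)(k-3)$, whose two roots are the integers $2$ and $3$, so $g$ merely vanishes (never dips below zero) at an integer, and $g(1)=2$, $g(2)=0$, $g(3)=0$, $g(4)=2$ are all nonnegative. Equivalently, the coefficient $\frac{2k(\alpha-k)}{(k+1)(\alpha-k+1)}$ equals $1$ at the critical indices $k=2,3$ and is strictly smaller elsewhere, which is exactly enough for $s_{k-1}s_{k+1}\le s_k^2$ across $k\in\{1,2,3,4\}$. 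This confirms log-concavity for $\alpha\le 5$, and the bound is sharp: at $\alpha=6$ one already has $g(3)=-2<0$, consistent with the known failure of log-concavity for larger independence numbers.
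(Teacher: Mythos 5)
Your proposal is correct and follows essentially the same route as the paper: the paper's proof also invokes the two inequalities from Lemma~\ref{lemW2-reg} (with $\lambda=p=1$, $n=2\alpha$) as in the proof of Theorem~\ref{log2}, reduces log-concavity to the quadratic inequality $k^{2}-\alpha k+\alpha+1\geq 0$ for integers $1\leq k\leq\alpha-1$, and notes this holds exactly when $\alpha\leq 5$ --- your observation that for $\alpha=5$ the quadratic factors as $(k-2)(k-3)$ and only vanishes at integer points is precisely the arithmetic fact underlying the paper's claim. Your explicit reduction to connected components and the index-shift details are just a more careful write-up of steps the paper leaves implicit.
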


\begin{proof}
Let $\alpha=\alpha(G)$ and $I(G;x)=\sum_{k=0}^{\alpha}s_{k}x^{k}$ denote the
independence number and independence polynomial of $G$, respectively. Recall
that a very well-covered graph is a well-covered graph with order $2\alpha$.
To prove that $I(G;x)$ is log-concave, based on the proof of Theorem
\ref{log2}, it suffices to verify that the inequality
\[
k^{2}-\alpha\cdot k+\alpha+1\geq0
\]
holds for all $1\leq k\leq\alpha-1$. This condition is equivalent to requiring
$\alpha\leq5$. Therefore, the independence polynomial of $G$ is log-concave
for all $\alpha\leq5$.
\end{proof}

\begin{lemma}
\label{Lem1}(\cite[Corollary 2.3]{DLMP} and \cite[Theorem 4]{Staples}) Let $H
$ be a graph. Then $H\circ K_{p}$ is a $\mathbf{W}_{p}$ graph, but it is not a
$\mathbf{W}_{p+1}$ graph.
\end{lemma}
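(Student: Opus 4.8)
The plan is to exploit the ``blade'' decomposition of the corona. Writing $B_v=\{v\}\cup V(K_p^{v})$ for the clique $K_{p+1}$ consisting of a vertex $v\in V(H)$ together with its attached copy of $K_p$, every independent set of $H\circ K_p$ meets each $B_v$ in at most one vertex; selecting one clique vertex from each blade gives an independent set, so $\alpha(H\circ K_p)=|V(H)|$ and $n(H\circ K_p)=(p+1)\cdot|V(H)|$. Thus a maximum independent set is precisely a transversal choosing one vertex per blade, and two choices can conflict only through apices $v,w$ with $vw\in E(H)$, since a clique vertex is nonadjacent to everything outside its own blade. This observation is the engine for both halves.

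For $H\circ K_p\in\mathbf{W}_p$ I would argue constructively (this is also \cite[Corollary 2.3]{DLMP}). Given $p$ pairwise disjoint independent sets $A_1,\dots,A_p$, note first that $n(H\circ K_p)\ge p+1\ge p$. Because the $A_i$ are pairwise disjoint and each meets a blade $B_v$ in at most one vertex, at most $p$ of the $p+1$ vertices of $B_v$ are occupied, so at least one is free. I would then build each $S_i$ as a transversal: on a blade already met by $A_i$ keep that vertex, and on every other blade assign $S_i$ a free clique (non-apex) vertex. A short count shows this is always possible: if $t$ of the $A_j$ meet $B_v$, then $p-t$ sets still need a vertex there, while there are at least $p-t$ free clique vertices in $B_v$ (exactly $p-t$ if no $A_j$ uses the apex $v$, and $p-t+1$ if one does), and distinct free clique vertices can be handed out. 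Consequently $S_i$ contains an apex only when forced by $A_i$; since the apices inside $A_i$ are pairwise nonadjacent in $H$, each $S_i$ is independent, hence maximum, the $S_i$ are disjoint blade-by-blade, and $A_i\subseteq S_i$.

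For the failure $H\circ K_p\notin\mathbf{W}_{p+1}$ I would invoke the equality case of Lemma \ref{lem_key}(i). Assume first $H$ is connected with $|V(H)|\ge 2$, so $H\circ K_p$ is connected with $\alpha=|V(H)|\ge 2$ and $n=(p+1)\alpha$. If it were $\mathbf{W}_{p+1}$, then applying Lemma \ref{lem_key} with parameter $p+1\ge 2$ would give $n\ge(p+1)\alpha$ with equality, forcing $H\circ K_p$ to be a complete graph on $p+1$ vertices; but that graph has independence number $1\ne\alpha$, a contradiction. For disconnected $H$ one has $H\circ K_p=\bigsqcup_i(H_i\circ K_p)$ over the components $H_i$ of $H$, so Theorem \ref{disconnected} reduces $\mathbf{W}_{p+1}$-membership to the components, and choosing a component $H_i$ with $|V(H_i)|\ge 2$ finishes via the connected case.

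The hard part is the degenerate case. When $H$ is edgeless, every blade is an isolated $K_{p+1}=K_1\circ K_p$, and a direct check shows $K_{p+1}$ \emph{is} $\mathbf{W}_{p+1}$ (any $p+1$ pairwise disjoint independent sets are at most singletons and extend to a system of distinct singleton representatives); hence by Theorem \ref{disconnected} the whole corona is $\mathbf{W}_{p+1}$, and the statement fails as literally phrased. I would therefore supply the evidently intended hypothesis that $H$ has an edge — equivalently, some component of $H$ has at least two vertices — under which the Lemma \ref{lem_key}(i) argument applies to that component and the conclusion $H\circ K_p\notin\mathbf{W}_{p+1}$ holds.
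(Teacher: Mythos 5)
Your proof is correct, and it necessarily differs from the paper's treatment, since the paper supplies no proof at all: Lemma \ref{Lem1} is imported verbatim from \cite[Corollary 2.3]{DLMP} and \cite[Theorem 4]{Staples}. Your blade-transversal construction proves the $\mathbf{W}_{p}$ membership directly, and your appeal to the equality case of Lemma \ref{lem_key}(i), combined with Theorem \ref{disconnected} to reduce to a connected component with at least two apices, proves the non-membership in $\mathbf{W}_{p+1}$ using only tools already available in the paper; what the citation buys is brevity, what your argument buys is self-containment. More importantly, your objection to the literal statement is well taken: if $H$ is edgeless, then $H\circ K_{p}$ is a disjoint union of copies of $K_{p+1}$, and each copy lies in $\mathbf{W}_{p+1}$ (any $p+1$ pairwise disjoint independent sets are empty sets or singletons, and they extend to $p+1$ distinct singletons; this is also implicit in the equality case of Lemma \ref{lem_key}(i), which exhibits the complete graph on $p+1$ vertices as a connected $\mathbf{W}_{p+1}$ graph), so Theorem \ref{disconnected} places $H\circ K_{p}$ in $\mathbf{W}_{p+1}$. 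The smallest instance already shows the problem: $K_{1}\circ K_{1}=K_{2}$ is $1$-well-covered, hence belongs to $\mathbf{W}_{2}$ by Theorem \ref{th4}, contradicting the lemma with $H=K_{1}$, $p=1$. So the hypothesis you add --- that $H$ has at least one edge, equivalently that some component of $H$ has at least two vertices --- is exactly what the second half of the statement requires, and is presumably the hypothesis carried by the cited sources. Note finally that the first half, $H\circ K_{p}\in\mathbf{W}_{p}$ for every $H$, is the only part of the lemma invoked later in the paper (Corollary \ref{corona_log}), so this correction does not propagate to the paper's applications.
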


Consequently, this leads to the following.

\begin{corollary}
\label{corona_log} Let $H$ be a graph of order $n$. The polynomial $I(H\circ
K_{p};x)$ is log-concave for every $p\geq\frac{n^{2}}{4(n+1)}$.
\end{corollary}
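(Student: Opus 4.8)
The plan is to reduce the claim to Corollary~\ref{cor7} by computing the two invariants of $G:=H\circ K_{p}$ on which that result depends, and then to patch the single value $p=2$ that Corollary~\ref{cor7} formally excludes. The decisive structural observation is that for each $v\in V(H)$ the set $\{v\}\cup V(K_{p}^{(v)})$ induces a clique $K_{p+1}$ in $G$; these $n$ cliques partition $V(G)$, and the only edges between two of them are the edges of $H$ joining the base vertices. Since an independent set meets each such $K_{p+1}$ in at most one vertex, $\alpha(G)\le n$, while picking one clique-vertex from each $K_{p}^{(v)}$ gives an independent set of size $n$ (distinct gadgets share no edges among clique-vertices); hence $\alpha(G)=n$. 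Combined with $n(G)=n+np=(p+1)n$, this yields the key identity $n(G)=(p+1)\cdot\alpha(G)$.

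First I would record $G\in\mathbf{W}_{p}$ by Lemma~\ref{Lem1}. Substituting $\alpha(G)=n$ turns the hypothesis into $p\ge\frac{\alpha(G)^{2}}{4(\alpha(G)+1)}$, so together with the identity above both numerical hypotheses of Corollary~\ref{cor7} hold exactly, and the conclusion follows at once for every $p\neq 2$ (Corollary~\ref{cor7} already incorporates the disconnected case via Lemma~\ref{log-uni}). It is instructive to see why the bound is sharp: because $n(G)-p\cdot\alpha(G)=\alpha(G)$, the quadratic $f(k)=k^{2}-(n(G)-p\alpha(G))k+p\alpha(G)+p$ from the proof of Theorem~\ref{log2} collapses to $k^{2}-\alpha(G)\,k+p(\alpha(G)+1)$, whose minimum over $\mathbb{R}$ is nonnegative precisely when $p\ge\frac{\alpha(G)^{2}}{4(\alpha(G)+1)}$; this is exactly the inequality forcing $s_{k}^{2}\ge s_{k-1}\,s_{k+1}$ for $1\le k\le\alpha(G)-1$.

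The hard part will be the excluded value $p=2$, which the bound $p\ge\frac{n^{2}}{4(n+1)}$ nonetheless admits whenever $n\le 8$. The exclusion in Corollary~\ref{cor7} and Theorem~\ref{log2} stems only from their use of Theorem~\ref{mthm} to produce $p$-quasi-regularizability, which genuinely fails at $p=2$ (as $C_{5}$ shows). I would therefore establish $p$-quasi-regularizability of $G=H\circ K_{p}$ directly, valid for every $p$: given an independent set $S$, each of its vertices lies in a distinct gadget and has at least $p$ neighbours inside that gadget (the remaining $p$ vertices of its $K_{p+1}$), and these neighbour sets are pairwise disjoint as they lie in distinct gadgets, so $|N_{G}(S)|\ge p\,|S|$. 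For $p=2$ I would then pass to the connected components $H_{j}\circ K_{2}$, each of which is $\mathbf{W}_{2}$ with $\alpha=n(H_{j})\le n$ and satisfies the hypothesis since $x\mapsto\frac{x^{2}}{4(x+1)}$ is increasing; on each component Lemma~\ref{lemW2-reg}(i) (with $\lambda=2$) and Lemma~\ref{lemW2-reg}(ii) now apply without invoking Theorem~\ref{mthm}, the same collapse of $f(k)$ gives log-concavity componentwise, and Lemma~\ref{log-uni} assembles the product. This closes the $p=2$ gap and makes the conclusion uniform in $p$.
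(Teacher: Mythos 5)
Your proposal is correct, and its main line is exactly the paper's: compute $\alpha(H\circ K_{p})=n$ and $n(H\circ K_{p})=(p+1)\,n$, so that $n(G)=(p+1)\cdot\alpha(G)$, then invoke Lemma~\ref{Lem1} together with Corollary~\ref{cor7} (whose hypothesis becomes precisely $p\geq\frac{n^{2}}{4(n+1)}$). Where you genuinely depart from the paper is the case $p=2$: the paper's proof simply asserts the conclusion ``for all $p\geq1$'' by citing Corollary~\ref{cor7}, even though that corollary (like Theorem~\ref{log2}, via Theorem~\ref{mthm}) explicitly assumes $p\neq2$, and $p=2$ does satisfy the corollary's hypothesis whenever $n\leq 8$ --- so the paper's own proof has a gap there which you correctly identified. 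Your patch is sound: the direct argument that $H\circ K_{p}$ is $p$-quasi-regularizable (each vertex of an independent set has at least $p$ neighbours inside its own closed gadget $\{v\}\cup V(K_{p}^{(v)})$, and these gadgets are pairwise disjoint) is valid for every $p$, and it substitutes for the only place Theorem~\ref{mthm} is needed; after that, applying Lemma~\ref{lemW2-reg} on each connected component $H_{j}\circ K_{2}$ (connected, in $\mathbf{W}_{2}$ by Theorem~\ref{disconnected}, with $\frac{\alpha_{j}^{2}}{4(\alpha_{j}+1)}\leq\frac{n^{2}}{4(n+1)}\leq p$ by monotonicity of $x\mapsto\frac{x^{2}}{4(x+1)}$) and observing that $f(k)=k^{2}-\alpha k+p(\alpha+1)$ has nonpositive discriminant reproduces the log-concavity estimate, and Lemma~\ref{log-uni} assembles the components. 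In short: same reduction as the paper for $p\neq2$, plus a correct repair of the $p=2$ case that the paper overlooks; your version is the more complete proof.
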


\begin{proof}
Taking into account that $n(H\circ K_{p})=(p+1)\cdot n\left(  H\right)  $ and
$\alpha(H\circ K_{p})=n\left(  H\right)  $, we obtain that%
\[
n(H\circ K_{p})=(p+1)\cdot\alpha(H\circ K_{p}).
\]
Consequently, $I(H\circ K_{p};x)$ is log-concave for all $p\geq1$, in
accordance with Lemma \ref{Lem1} and Corollary \ref{cor7}.
\end{proof}

\begin{corollary}
\cite{DLMP} Let $G\circ\mathcal{H}$ be a clique corona graph, where
$\mathcal{H}=\{K_{p\left(  v\right)  }:v\in V(G) \text{ and } p\left(
v\right)  \geq1\}$. Let $p=\min\limits_{v\in V(G)}p\left(  v\right)  $. Then
$G\circ\mathcal{H}\in\mathbf{W}_{p}$.
\end{corollary}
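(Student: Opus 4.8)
The plan is to argue directly from the definition of $\mathbf{W}_p$, exploiting the block structure of the corona. For each $v\in V(G)$ write $Q_v=V(K_{p(v)})$ for the vertex set of the clique attached to $v$, and call $B_v=\{v\}\cup Q_v$ the \emph{block} at $v$; in $B_v$ the vertex $v$ is adjacent to every vertex of the clique $Q_v$, and there are no edges between $Q_u$ and $Q_v$ for $u\neq v$. First I would record the combinatorial shape of independent sets: since $Q_v$ is a clique and $v$ is joined to all of it, every independent set of $G\circ\mathcal{H}$ meets each block $B_v$ in at most one vertex, and a maximal one meets it in exactly one vertex, because an untouched block can always absorb one of its clique vertices (that vertex is adjacent only to $v$ and to $Q_v\setminus\{q\}$, none of which lie in the set). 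Hence every maximal independent set selects precisely one vertex from each block, so $G\circ\mathcal{H}$ is well-covered with $\alpha(G\circ\mathcal{H})=n(G)$; in particular $n(G\circ\mathcal{H})=n(G)+\sum_{v}p(v)\geq p$, so the cardinality condition in the definition of $\mathbf{W}_p$ is met.

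Next, given $p$ pairwise disjoint independent sets $A_1,\dots,A_p$, I would build maximum independent sets $S_1,\dots,S_p$ block by block, keeping the $G$-parts as small as the $A_i$ force them to be. Concretely, set $T_i=A_i\cap V(G)$ and decree that $S_i\cap V(G)=T_i$; since $A_i$ is independent in the corona, $T_i$ is independent in $G$, so this choice creates no conflict among the $G$-vertices. For a fixed block $B_v$ the indices $\{1,\dots,p\}$ split into those $i$ with $v\in A_i$ (at most one, by disjointness), those with a prescribed vertex $q_i\in A_i\cap Q_v$ (whose vertices are distinct, again by disjointness, and for which $v\notin A_i$), and those whose $A_i$ misses $B_v$ entirely. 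I would let $S_i$ inherit whatever $A_i$ already chose in $B_v$, and for every remaining index $i$ — exactly those with $v\notin A_i$ — assign $S_i$ a fresh, so-far-unused vertex of $Q_v$.

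The feasibility of this last assignment is the crux, and it reduces to a counting check. In the block $B_v$ the number of sets that must draw a vertex from $Q_v$ equals $p$ when no $A_i$ contains $v$, and $p-1$ otherwise; in either case it is at most $p\leq p(v)=|Q_v|$, so distinct clique vertices can always be allotted (the vertices already forced by the $A_i$ are distinct, and the remaining indices can be filled from the unused vertices of $Q_v$). This immediately yields pairwise disjointness of the $S_i$: on the $G$-side because $T_1,\dots,T_p$ are pairwise disjoint, and inside each clique $Q_v$ because the assigned vertices are distinct by construction. It remains to verify that each $S_i$ is a maximum independent set — it meets every block exactly once, and its $G$-part $T_i$ is independent — and that $A_i\subseteq S_i$, which holds verbatim since $S_i$ retains every vertex $A_i$ placed in each block.

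I expect the only genuine obstacle to be the interplay between the two requirements: independence among the chosen $G$-vertices and disjointness across the $p$ copies. The design above decouples them — all independence is confined to $G$ and inherited directly from the $A_i$, while all the disjointness slack is pushed into the cliques $Q_v$, where the hypothesis $p(v)\geq p$ supplies exactly enough room. This is precisely the role of taking $p=\min_{v}p(v)$, and specializing the argument to $p(v)\equiv p$ recovers the $\mathbf{W}_p$-membership in Lemma~\ref{Lem1}.
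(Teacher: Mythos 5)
Your proof is correct, but there is nothing in the paper to compare it against: this corollary is stated without proof and imported from \cite{DLMP}, just like Lemma \ref{Lem1}, so your blind argument is effectively supplying the missing internal proof. Your route --- decompose the corona into blocks $B_v=\{v\}\cup Q_v$, observe that every edge lies either inside $G$ or inside a block, so that a set meeting each block at most once and having independent $G$-part is independent, then extend the $A_i$ block by block --- is the natural direct construction, and the counting step ($p-1$ or $p$ indices must draw distinct vertices from $Q_v$, while $|Q_v|=p(v)\geq p$) is exactly where the hypothesis $p=\min_v p(v)$ enters; specializing $p(v)\equiv p$ also reproves the positive half of Lemma \ref{Lem1}. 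Two small points to tighten in a written version: (i) the indices receiving a \emph{fresh} vertex of $Q_v$ are those with $A_i\cap B_v=\emptyset$, not ``exactly those with $v\notin A_i$'' --- the indices with a prescribed $q_i\in A_i\cap Q_v$ also satisfy $v\notin A_i$ but inherit rather than receive; (ii) the independence of each $S_i$ deserves one explicit sentence: the vertex chosen in $Q_v$ has all its neighbours inside $B_v$, and $S_i$ contains neither $v$ (since in that case $v\notin T_i$) nor any other vertex of $Q_v$, so independence reduces to that of $T_i$ in $G$. With these clarifications the argument is complete and self-contained.
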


\begin{corollary}
\label{GHpmin} Let $p=\min\limits_{v\in V(G)}p\left(  v\right)  $ and
$\mathcal{H}=\{K_{p\left(  v\right)  }:v\in V(G) \text{ and } p\left(
v\right)  \geq1\}$. The polynomial $I(G\circ\mathcal{H};x)$ is log-concave for
all $p\ge\frac{n(G)^{2}}{4(n(G)+1)}$.
\end{corollary}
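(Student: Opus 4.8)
The plan is to reduce the non-uniform corona to the uniform one already settled in Corollary~\ref{corona_log}. Writing $\Gamma=G\circ\mathcal{H}$ and classifying each independent set of $\Gamma$ by the set $A$ of base vertices of $G$ it meets (no vertex of the attached clique $K_{p(v)}$ may be used once $v\in A$, and at most one of them otherwise), one obtains
\[
I(\Gamma;x)=\sum_{A}x^{|A|}\prod_{v\notin A}(1+p(v)x),
\]
the sum ranging over independent sets $A$ of $G$. Hence $\alpha(\Gamma)=n(G)$ and $n(\Gamma)=n(G)+\sum_{v}p(v)\geq(p+1)\cdot n(G)$, with equality precisely when all $p(v)=p$. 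The components of $\Gamma$ are the clique coronas over the components of $G$; since deleting vertices of $G$ only raises $\min_{v}p(v)$ while $\tfrac{m^{2}}{4(m+1)}$ increases in $m$, each component inherits the hypothesis, so by Theorem~\ref{disconnected} and Lemma~\ref{log-uni} I may assume $G$ connected.

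The naive move is to feed $\Gamma$ into Theorem~\ref{log2}: the lower bound $n(\Gamma)\geq(p+1)\alpha(\Gamma)$ always holds and $p\geq\frac{\alpha(\Gamma)^{2}}{4(\alpha(\Gamma)+1)}$ is the hypothesis. This closes the \emph{balanced} case $n(\Gamma)\leq p\cdot\alpha(\Gamma)+2\sqrt{p\cdot\alpha(\Gamma)+p}$, but nothing bounds $\sum_{v}p(v)$ from above, so when the cliques are wildly non-uniform $n(\Gamma)$ overshoots both intervals of Theorem~\ref{log2} and its quadratic certificate genuinely fails, even though $I(\Gamma;x)$ stays log-concave. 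Instead I would induct, shrinking every oversized clique down to size $p$, the base case being the uniform corona $G\circ K_{p}$ handled by Corollary~\ref{corona_log}.

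Fix $v_{0}$ with $p(v_{0})>p$ and delete one vertex $a$ of $K_{p(v_{0})}$; as $N_{\Gamma}[a]$ is exactly $\{v_{0}\}\cup V(K_{p(v_{0})})$, the standard deletion identity gives
\[
I(\Gamma;x)=I(\Gamma^{-};x)+x\cdot I(\Lambda;x),
\]
where $\Gamma^{-}$ is the same corona with $v_{0}$'s clique one smaller (log-concave by induction on the total excess $\sum_{v}(p(v)-p)$) and $\Lambda=(G-v_{0})\circ\mathcal{H}''$ is obtained by erasing the whole set $\{v_{0}\}\cup V(K_{p(v_{0})})$ (log-concave by induction on $|V(G)|$, its hypothesis being inherited as above). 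Putting $s_{k}=[x^{k}]I(\Gamma^{-};x)$ and $\ell_{k}=[x^{k}]I(\Lambda;x)$, and using $s_{k}^{2}\geq s_{k-1}s_{k+1}$ together with $\ell_{k-1}^{2}\geq\ell_{k-2}\ell_{k}$, log-concavity of the sum reduces to the cross inequality
\[
2\,s_{k}\,\ell_{k-1}\geq s_{k-1}\,\ell_{k}+s_{k+1}\,\ell_{k-2}\qquad(1\leq k\leq\alpha(\Gamma)-1).
\]

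The main obstacle is exactly this cross inequality, which compares the consecutive-ratio behaviour of $\Gamma^{-}$ with that of its localization $\Lambda$. I would bound the ratios $s_{k+1}/s_{k}$ and $\ell_{k+1}/\ell_{k}$ from both sides via Lemma~\ref{lemW2-reg}, applying parts (i) and (ii) to the connected graph $\Gamma^{-}$ and componentwise to $\Lambda$ (for $p=2$, excluded from Theorem~\ref{log2}, one uses that the corona is directly $2$-quasi-regularizable, so Lemma~\ref{lemW2-reg}(i) still applies). A raw application is too lossy near $k=1$, where $s_{0}=\ell_{0}=1$ forces the estimate by hand ($2s_{1}\geq\ell_{1}$, i.e.\ $2\,n(\Gamma^{-})\geq n(\Lambda)$); the delicate part is the interior range, and I expect the clean route is to carry a stronger inductive invariant than bare log-concavity — an interlacing of the ratio sequences $(s_{k+1}/s_{k})$ and $(\ell_{k+1}/\ell_{k})$ — while shrinking the cliques in a balanced order so that $\Lambda$ never grows too large relative to $\Gamma^{-}$.
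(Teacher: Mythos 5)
Your diagnosis of the difficulty is correct and, in fact, exposes a weakness of the paper itself. The paper offers no proof of this corollary: it is displayed as an immediate consequence of the preceding corollary ($G\circ\mathcal{H}\in\mathbf{W}_{p}$, quoted from \cite{DLMP}), in parallel with the way Corollary~\ref{corona_log} follows from Corollary~\ref{cor7}. But Corollary~\ref{cor7} needs the exact equality $n=(p+1)\cdot\alpha$, which for $\Gamma=G\circ\mathcal{H}$ (where $\alpha(\Gamma)=n(G)$ and $n(\Gamma)=n(G)+\sum_{v}p(v)$) holds only in the uniform case $p(v)\equiv p$; and, as you observe, once $\sum_{v}(p(v)-p)$ is large, $n(\Gamma)$ exceeds both upper bounds on $n$ demanded by Theorem~\ref{log2} and Theorem~\ref{theorem17}, so the paper's machinery simply does not reach the unbalanced case. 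Your opening reductions (the formula $I(\Gamma;x)=\sum_{A}x^{|A|}\prod_{v\notin A}(1+p(v)x)$, the passage to connected $G$ via Lemma~\ref{log-uni}, the direct verification of $p$-quasi-regularizability of the corona, the balanced case) are all sound.

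However, your proposed repair does not close the case that actually matters. After the deletion identity $I(\Gamma;x)=I(\Gamma^{-};x)+x\cdot I(\Lambda;x)$, the entire problem is concentrated in the cross inequality $2s_{k}\ell_{k-1}\geq s_{k-1}\ell_{k}+s_{k+1}\ell_{k-2}$: a sum of two log-concave polynomials is in general not log-concave, and this inequality is exactly what separates your situation from the generic failure. You never prove it. Your own plan concedes that the two-sided ratio bounds of Lemma~\ref{lemW2-reg} are ``too lossy'' (indeed they are: the upper bound for $\ell_{k}/\ell_{k-1}$ is roughly $n(\Lambda)-(p+1)(k-1)$ over $k$, while the lower bound for $s_{k}/s_{k-1}$ is roughly $p(\alpha-k+1)$ over $k$, and in the unbalanced regime $n(\Lambda)$ can dwarf $p\cdot\alpha$, so these bounds point the wrong way precisely when the cliques are non-uniform). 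What remains is a conjectural ``stronger inductive invariant'' --- an unspecified interlacing of the ratio sequences, to be propagated ``in a balanced order'' of clique shrinking --- which is neither formulated precisely, nor verified for the base case $G\circ K_{p}$, nor shown to be inherited by $\Gamma^{-}$ and $\Lambda$. As it stands, the proposal is a credible research program that correctly identifies the obstruction (one the paper glosses over), but it is not a proof of the corollary.
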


As a consequence, the following corollary partially confirms that the
conjecture stated in \cite[Conjecture 4.2]{LM18} also holds true.

\begin{corollary}
\label{cor_tree} If $G$ is a well-covered tree with at least two vertices with
$\alpha(G)\le5$, then $I(G;x)$ is log-concave.
\end{corollary}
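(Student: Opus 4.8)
The plan is to reduce the statement to Corollary~\ref{cor11} by showing that a well-covered tree is automatically very well-covered, so that the $\alpha\le 5$ hypothesis becomes exactly the hypothesis of that corollary. First I would record the elementary structural facts about $G$. Since $G$ is a tree with at least two vertices it is connected, hence has no isolated vertices; being acyclic it has $\mathrm{girth}(G)\ge 6$ (indeed there are no cycles at all, so the girth condition holds vacuously); and $G$ is neither $K_1$ (it has at least two vertices) nor $C_7$ (which is not a tree). These are precisely the hypotheses of the Finbow--Hartnell--Nowakowski characterization \cite{FHN93} recalled in the introduction, so I may conclude that $G=H\circ K_1$ for some graph $H$, which is necessarily a tree.

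Next I would verify that $G$ is very well-covered. Writing $G=H\circ K_1$, the corona construction gives $n(G)=2\,n(H)$ and $\alpha(G)=n(H)$ (a maximum independent set is obtained by taking the pendant vertex attached to each vertex of $H$, and no larger independent set exists since each vertex--pendant pair contributes at most one vertex). Hence $n(G)=2\alpha(G)$. Moreover every vertex of $H$ is adjacent to its pendant and every pendant is adjacent to its vertex, so $G$ has no isolated vertices, and a clique corona graph is well-covered by \cite[Theorem 1]{TV92}. Therefore $G$ is a well-covered graph of order $2\alpha(G)$ without isolated vertices, i.e.\ it is very well-covered in the sense of \cite{Favaron1982}.

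Finally, since $\alpha(G)\le 5$ by hypothesis, Corollary~\ref{cor11} applies directly and yields that $I(G;x)$ is log-concave, completing the argument. I expect the only genuinely delicate point to be the choice of tool for this last step, rather than any computation. Because $p=1$ in the representation $G=H\circ K_1$, the general corona bounds (Corollary~\ref{cor7} or Corollary~\ref{corona_log}) would require $1\ge \frac{\alpha(G)^2}{4(\alpha(G)+1)}$, and this already fails at $\alpha(G)=5$, where $\frac{25}{24}>1$. It is therefore essential to invoke the sharper Corollary~\ref{cor11}, which is valid for every very well-covered graph with $\alpha\le 5$ with no lower bound on $p$, instead of the corona corollaries; getting the reduction set up so that exactly this result is triggered is the crux of the proof.
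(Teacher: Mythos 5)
Your proposal is correct, and it follows what is evidently the paper's intended route: the paper states this corollary with no explicit proof, and the surrounding material (the Finbow--Hartnell--Nowakowski characterization recalled in the introduction, Corollary \ref{cor11} on very well-covered graphs, and the remark immediately after the corollary that $G\circ K_{1}$ is always very well-covered) points exactly to your reduction, namely well-covered tree with at least two vertices $\Rightarrow$ $G=H\circ K_{1}$ $\Rightarrow$ very well-covered $\Rightarrow$ Corollary \ref{cor11}. One point in your write-up deserves emphasis: the paper introduces the corollary with ``As a consequence'' right after Corollary \ref{GHpmin}, but, as you observe, every corona-based corollary (Corollary \ref{cor7}, Corollary \ref{corona_log}, Corollary \ref{GHpmin}) applied with $p=1$ demands $1\ge\frac{\alpha(G)^{2}}{4(\alpha(G)+1)}$, which fails at $\alpha(G)=5$ since $\frac{25}{24}>1$; the case $\alpha(G)=5$ genuinely requires Corollary \ref{cor11}, whose proof exploits that the quadratic $k^{2}-\alpha k+\alpha+1=(k-2)(k-3)$ stays nonnegative at integer arguments even though its discriminant is positive. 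So your argument not only fills in the omitted proof but also pinpoints why a literal reading of the paper's ``consequence of the corona corollary'' phrasing would leave a gap precisely at $\alpha(G)=5$.
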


It is established that taking the corona of any graph $G$ with $K_{1}$ yields
the very well-covered graph $G\circ K_{1}$ (see \cite[Corollary 3]{TV92}). By
applying this method, we can generate an infinite family of very well-covered
trees based on any given tree. This brings us to the following.

\begin{corollary}
For any tree $T$ with $\alpha(T)\le5$, the independence polynomials of the
following graphs%
\[
T\circ K_{1},\left(  T\circ K_{1}\right)  \circ K_{1},\left(  \left(  T\circ
K_{1}\right)  \circ K_{1}\right)  \circ K_{1},\left(  \left(  \left(  T\circ
K_{1}\right)  \circ K_{1}\right)  \circ K_{1}\right)  \circ K_{1},\ldots
\]
are log-concave.
\end{corollary}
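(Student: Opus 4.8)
The plan is to isolate the one operation that builds the entire tower---taking the corona with $K_{1}$---and to show that it preserves log-concavity of the independence polynomial; the statement then follows by a one-line induction. The starting point is the corona identity
\[
I(G\circ K_{1};x)=\sum_{k=0}^{\alpha(G)}s_{k}(G)\,x^{k}(1+x)^{n(G)-k}=(1+x)^{n(G)}\,I\!\left(G;\tfrac{x}{1+x}\right),
\]
where $s_{k}(G)=[x^{k}]I(G;x)$: an independent set of $G\circ K_{1}$ is obtained by choosing an independent set $A$ in $G$ together with an arbitrary subset of the pendants attached to the $n(G)-|A|$ vertices of $V(G)\setminus A$ (the pendant at a vertex of $A$ is forbidden). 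Since every summand has degree $n(G)$, I would factor out $(1+x)^{n(G)-\alpha(G)}$ and write $I(G\circ K_{1};x)=(1+x)^{n(G)-\alpha(G)}\,R_{G}(x)$ with $R_{G}(x)=\sum_{k=0}^{\alpha(G)}s_{k}(G)\,x^{k}(1+x)^{\alpha(G)-k}$.

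Because $(1+x)^{n(G)-\alpha(G)}$ is log-concave, Lemma \ref{log-uni} reduces everything to the single implication, which I call (P):
\[
I(G;x)\ \text{log-concave}\ \Longrightarrow\ R_{G}(x)\ \text{log-concave.}
\]
Granting (P), the corollary is immediate. A tree $T$ with $\alpha(T)\le5$ is bipartite, so its larger colour class is independent and $n(T)\le2\alpha(T)\le10\le25$; hence $I(T;x)$ is log-concave by Radcliffe's verification for trees of order at most $25$ \cite{Radcliffe}. Applying (P) along the tower $T,\,T\circ K_{1},\,(T\circ K_{1})\circ K_{1},\dots$ then yields log-concavity at every stage. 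It is worth stressing that from the second member on the independence number doubles (already $\alpha(T\circ K_{1})=n(T)$ can exceed $5$), so Corollaries \ref{cor11} and \ref{cor_tree} \emph{cannot} be invoked beyond the base; the engine must be the transfer statement (P), not an $\alpha\le5$ bound.

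The main obstacle is therefore (P) itself, i.e.\ showing that the linear map $(s_{k})\mapsto(c_{m})$ with $c_{m}=\sum_{k}s_{k}\binom{\alpha(G)-k}{m-k}$ carries log-concave sequences to log-concave sequences. I would attack it on two fronts. When $T$ is claw-free (for instance a path) the input $I(T;x)$ is real-rooted by Chudnovsky--Seymour \cite{CS}, and since the map $r\mapsto\frac{r}{1-r}$ sends each real root $r<0$ of $I(G;x)$ to $\frac{r}{1-r}\in(-1,0)$, the polynomial $R_{G}$ stays real-rooted, so the whole tower is real-rooted and log-concavity is automatic; this already settles a large subclass. The difficulty is the general case, because trees with claws---already $K_{1,3}$, whose independence polynomial $1+4x+3x^{2}+x^{3}$ is not real-rooted---lose real-rootedness after one corona, so a purely log-concavity-based argument is unavoidable. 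For this I would use that $\binom{\alpha(G)-k}{m-k}$ is a Pascal-type totally positive kernel and invoke the classical principle that a $\mathrm{TP}_{2}$ kernel maps log-concave sequences to log-concave sequences, the remaining labour being to verify $\mathrm{TP}_{2}$ for this particular kernel. A self-contained alternative is to exploit that for a positive log-concave sequence the ratios $s_{k-1}/s_{k}$ are nondecreasing---equivalently $s_{i}s_{j}\ge s_{i'}s_{j'}$ whenever $i+j=i'+j'$ and $|i-j|\le|i'-j'|$---and to grind out $c_{m}^{2}\ge c_{m-1}c_{m+1}$ from these balanced-product inequalities. Obtaining such an estimate \emph{uniformly in the degree} $\alpha(G)$ is where I expect the real work to lie; the factorization and the induction are routine.
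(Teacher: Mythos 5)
Your overall strategy --- a base case plus the transfer statement (P) that $G\mapsto G\circ K_{1}$ preserves log-concavity --- is not just \emph{a} viable route, it is essentially the \emph{only} one, and your diagnosis of why is correct and sharper than the paper's own treatment. The paper states this corollary with no proof, implicitly deriving it from Corollary \ref{cor_tree} via the remark that every member of the tower is a very well-covered tree; but since $\alpha(G\circ K_{1})=n(G)$, the independence number doubles at every step, so the $\alpha\le 5$ hypothesis of Corollaries \ref{cor11} and \ref{cor_tree} fails from the second member on (and already for the first member when $n(T)>5$, e.g.\ $T=P_{10}$). Your corona identity, the factorization $I(G\circ K_{1};x)=(1+x)^{n(G)-\alpha(G)}R_{G}(x)$, the use of Lemma \ref{log-uni}, and the base case ($n(T)\le 2\alpha(T)\le 10\le 25$, so $I(T;x)$ is log-concave by the verification cited as \cite{Radcliffe}) are all correct. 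Moreover, the lemma you call (P) is not something you need to prove from scratch: it is precisely Zhu's preservation theorem \cite[Corollary 3.3]{Zhu}, which this very paper quotes in its Conclusion (``the independence polynomial of $G\circ K_{p}$ is log-concave whenever the independence polynomial of $G$ is log-concave''). With that citation your argument closes in a few lines.

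As a self-contained proof, however, the proposal has a genuine gap, and it sits exactly where you say the ``real work'' lies: (P) is left unproven, and the principal tool you propose for it is unsound. There is no classical principle that a $\mathrm{TP}_{2}$ kernel maps log-concave sequences to log-concave sequences. For instance, the kernel
\[
K=\begin{pmatrix}1&0&0\\ 1&1&0\\ 0&1&4\end{pmatrix}
\]
is $\mathrm{TP}_{2}$ (all nine $2\times 2$ minors equal $0$, $1$, or $4$), yet it maps the log-concave sequence $(1,1,1)$ to $(1,2,5)$, which is not log-concave since $2^{2}<1\cdot 5$. Hence verifying $\mathrm{TP}_{2}$ for the kernel $\binom{\alpha(G)-k}{m-k}$ would prove nothing; genuine preservation theorems of this type (Brenti's and Wang--Yeh's results on linear transformations preserving log-concavity, of which Zhu's corollary is an instance) need substantially more than $\mathrm{TP}_{2}$, and that extra work is the whole content of (P). Your real-rootedness branch is correct but, as you concede, it covers only claw-free $T$, i.e.\ paths, and your fallback of ``grinding out'' $c_{m}^{2}\ge c_{m-1}c_{m+1}$ is not carried out. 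So what you have is a correct reduction to a true, citable lemma, accompanied by a proof sketch of that lemma that, as written, would fail.
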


\section{Conclusion}

\label{sec4} This paper primarily addresses problems related to log-concavity
of $\mathbf{W}_{p}$ graphs. Our results indicate that the independence
polynomial $I(G; x)$ is log-concave whenever $p$ is sufficiently large
relative to the independence number $\alpha(G)$ of $G$. Furthermore, Zhu, in
\cite[Corollary 3.3]{Zhu}, demonstrated the log-concave preservation of the
independence polynomial of $G\circ K_{p}$ whenever the independence polynomial
of $G$ is log-concave. Theorem \ref{theorem17} leads us to the following.

\begin{prob}
What conditions on the $\mathbf{W}_{p}$ graph $G$ guarantee that the
independence polynomial $I(G;x)$ is log-concave or at least unimodal?
\end{prob}

In addition, Theorem \ref{mthm} motivates the following.

\begin{conjecture}
Let $G$ be a connected $\mathbf{W}_{2}$ graph. Then $G$ is $2$%
-quasi-regularizable if and only if $n(G)\geq3\cdot\alpha(G)$.
\end{conjecture}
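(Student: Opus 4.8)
The plan is to mirror the architecture of the proof of Theorem \ref{mthm}, pinpoint the single place where the hypothesis $p\geq3$ was used, and replace it by an argument tailored to $\mathbf{W}_{2}$ graphs. The forward implication needs no change whatsoever: if $G$ is $2$-quasi-regularizable and $S$ is a maximum independent set, then $n(G)=\left\vert S\right\vert+\left\vert N_{G}(S)\right\vert\geq\alpha+2\alpha=3\alpha$.

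For the converse, assume $n(G)\geq3\alpha$ and aim to prove $\left\vert N_{G}(S)\right\vert\geq2\left\vert S\right\vert$ for every independent set $S$. The cases $\alpha=1$, $\left\vert S\right\vert=1$, and $\left\vert S\right\vert=\alpha$ are handled verbatim as in Theorem \ref{mthm}, using Lemma \ref{lem_key}\emph{(ii)} and the identity $\left\vert N_{G}(S)\right\vert=n-\alpha\geq2\alpha$. In the remaining range $1<\left\vert S\right\vert<\alpha$ I would introduce the private-neighbour sets $X_{x}=N_{G}(x)-N_{G}(S-x)$, pass to the localizations $G_{S-x}$ through Lemmas \ref{key2} and \ref{CP}, and run the same reduction: whenever some $x\in S$ has $\left\vert X_{x}\right\vert\geq2$, the inequality for $S$ follows from the one for $S-x$. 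This collapses the problem to the rigid case in which $\left\vert X_{x}\right\vert=p-1=1$ for every $x\in S$.

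The hard part lives entirely in this rigid case, and it is exactly what forced the exclusion $p\neq2$ in Theorem \ref{mthm}. There, every contradiction in \emph{Fact 3} and \emph{Fact 4} was ultimately produced by exhibiting a $K_{2}$ (or another undersized complete graph) as a $\mathbf{W}_{p}$-component, which is absurd only when $p\geq3$; for $p=2$ this absurdity evaporates, since $K_{2}\in\mathbf{W}_{2}$. Consequently the clean count $\left\vert U\right\vert\geq\left\vert S\right\vert$ that closed the previous argument is no longer available, and a genuinely new input is required.

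The resolution I would pursue is to trade the now-vacuous local clique obstruction for the global hypothesis $n\geq3\alpha$. When every $x\in S$ has a unique private neighbour, the localizations $G_{S-x}$ peel off pendant copies of $K_{2}$ on $\{x\}\cup X_{x}$; using the $\mathbf{W}_{2}$-stability of localizations (Lemma \ref{Pinter_lem}) together with Theorem \ref{disconnected}, I would argue that this pendant structure propagates and drives the relevant induced part of $G$ toward a very well-covered configuration with $n=2\alpha$. Since $\alpha\geq3$ in this range, $2\alpha<3\alpha$, contradicting $n\geq3\alpha$; the characterisation $\mathbf{W}_{2}\Leftrightarrow1$-well-covered (Theorem \ref{th4}) should supply the rigidity needed to globalize the propagation. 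The principal obstacle is precisely this propagation step: unlike the $p\geq3$ case, no individual local configuration is forbidden, so the contradiction cannot come from a single bad component and must instead be extracted by a global count — one that the example $C_{5}$, where $n=5<6=3\alpha$, shows to be sharp.
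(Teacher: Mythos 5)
The statement you are attempting is not proved anywhere in the paper: it is posed explicitly as an open conjecture, precisely because Theorem \ref{mthm} had to exclude $p=2$. So there is no paper proof to compare against, and the only question is whether your argument closes the gap. It does not. To your credit, the parts you import from Theorem \ref{mthm} do survive the passage to $p=2$: the forward implication, the cases $\left\vert S\right\vert=1$ and $\left\vert S\right\vert=\alpha$, Claim 1, Claim 2, Facts 1 and 2, and the reduction to the rigid case where $\left\vert X_{x}\right\vert=1$ for all $x\in S$ and $\left\vert U\right\vert\leq\left\vert S\right\vert-1$ all rest on Lemmas \ref{lem_key} and \ref{key2}, which are valid for $p=2$. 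You also correctly locate the breakdown: the contradictions in Facts 3 and 4 are obtained by exhibiting an undersized complete component (ultimately a $K_{2}$) inside a $\mathbf{W}_{p}$ graph, which is absurd only for $p\geq3$.

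But your treatment of the rigid case --- which is the entire content of the conjecture --- is a declaration of intent, not an argument. You assert that the pendant $K_{2}$'s appearing in the localizations $G_{S-x}$ ``propagate'' and drive $G$ toward a very well-covered configuration with $n=2\alpha$, yet you supply no mechanism for this propagation, and it is far from clear one exists: the rigid configuration constrains only $S\cup N_{G}(S)$, giving $\left\vert N_{G}(S)\right\vert\leq2\left\vert S\right\vert-1$, while $V(G)=S\cup N_{G}(S)\cup V(G_{S})$ and nothing in the rigid case controls $n(G_{S})$. Combining $n\geq3\alpha$ with $\left\vert N_{G}(S)\right\vert\leq2\left\vert S\right\vert-1$ merely forces $n(G_{S})\geq3\alpha(G_{S})+1$, which is not absurd on its face: $G_{S}$ is again a $\mathbf{W}_{2}$ graph (Lemma \ref{key2}) and may be large and dense, so the hypothesis $n\geq3\alpha$ cannot be contradicted by looking at $S\cup N_{G}(S)$ alone. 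To finish you would need either an iteration of the whole analysis inside $G_{S}$ (with control of how the defect $\left\vert S\right\vert-\left\vert U\right\vert$ accumulates) or a genuinely new global count; Theorem \ref{th4} and Lemma \ref{Pinter_lem}, which you invoke for ``rigidity,'' provide neither. As it stands, your proposal reduces the conjecture to an unproven structural claim that is essentially as strong as the conjecture itself.
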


\section*{Acknowledgment}

Parts of this work were carried out during a stay of the first and fourth
authors at the Vietnam Institute for Advanced Study in Mathematics (VIASM).
They would like to thank VIASM for its hospitality and generous support.
Additionally, their research is also partially supported by NAFOSTED (Vietnam)
under the grant number 101.04-2024.07.

\section*{Declarations}

\noindent\textbf{Conflict of interest/Competing interests} \newline The
authors declare that they have no competing interests\newline\noindent
\textbf{Ethical approval and consent to participate}\newline\noindent Not
applicable. \newline\noindent\textbf{Consent for publication}\newline\noindent
Not applicable. \newline\noindent\textbf{Availability of data, code and
materials}\newline\noindent Data sharing not applicable to this work as no
data sets were generated or analyzed during the current study. \newline%
\noindent\textbf{Authors' contribution}\newline All authors have contributed
equally to this work.\newline

\end{document}